\documentclass{article}
\usepackage[
backend=biber,
style=numeric, % Alphabetic default
sorting=ynt
]{biblatex}
\usepackage[document]{ragged2e}
\usepackage{geometry}
\usepackage{graphicx} % Required for inserting images
\newtheorem{theorem}{Theorem}

\newtheorem{lemma}[theorem]{Lemma}
\newtheorem{corollary}{Corollary}[theorem]
\newtheorem{remark}{Remark}
\usepackage{xcolor}
\usepackage{amsmath}
\usepackage{bbm}
\usepackage{longtable}
\usepackage{amssymb}
\usepackage{comment}
\usepackage{hyperref}
\usepackage{relsize}

\newcommand*{\QEDB}{\null\nobreak\hfill\ensuremath{\square}}%
\newcommand*\Eval[3]{\left.#1\right\rvert_{#2}^{#3}}

\newcommand{\sig}{{\sigma}}

\newcommand{\bbE}{{\mathbb E}}

\newcommand{\bbP}{{\mathbb P}}

\usepackage{caption}
\usepackage{blindtext}

\title{Berry-Esseen Bounds for Statistics of Non-Stationary, $\phi$-Mixing Random Variables}
\author{Brendan Williams, Yeor Hafouta}
\date{\today}

\begin{document}
\maketitle
\begin{abstract}
   Using a modification of Stein's method, we generalize the results of Bentkus, G{\"o}tze, and Tikhomirov \cite{bentkus1997berry} to obtain Berry-Esseen bounds for a broad class of statistics of sequences of $\phi$-mixing, non-stationary random variables with polynomial mixing rates. %and linear variance.  
   We then consider applications of this theorem to ensure Berry-Esseen rates for various classes of non-stationary $\phi$-mixing random variables, including rates for a general class of processes of $\phi$-mixing random variables satisfying an aggregate third moment bound.
\end{abstract}

\setlength{\parindent}{13pt} %20 pt is standard

\section{Introduction}
One of the main results in probability theory is the Central Limit Theorem (CLT), which states that an appropriate normalized partial sum with independent summands converges in distribution towards the standard normal law. From a practical point of view a natural question concerns  convergence rates. The classical Berry-Esseen theorem \cite{berry1941accuracy, esseen1942original} provides  optimal rates in the CLT. Since then there have been many works on the CLT together with rates for weakly dependent (i.e. mixing) summands as described in the following.

Berry-Esseen theorems (and CLT) are trivially false in general without some notions of weak dependence.  This notion of weak dependence has been extensively studied in literature. Typically, Berry-Esseen rates are obtained for specific classes of random variables with sufficient mixing rates and structural properties ($\phi$-mixing, $\alpha$-mixing, or $m$-dependence, for example) for the sequence / array, often assuming stationarity. For example, in \cite{jirak2016berry}, Jirak obtained optimal rates for stationary processes for random variables with $p\in(2,3]$ moments. In \cite{rio1996berry}, Rio obtained optimal Berry-Esseen bounds under stationarity and uniformly strong mixing assumptions, with rates $\sum_{j\geq 0}j\phi(j)$ (for the  
$j$-th uniform mixing coefficient $\phi(j)$). See \cite{asghari2017berry}, \cite{wang1999berry}, \cite{wang2006berry}, \cite{bong2022high}, \cite{bolthausen1982berry}, and \cite{lahiri2009berry} for more (of the numerous) examples of Berry-Esseen bounds for various classes of weakly dependent sequences (Berry-Esseen bounds for the kernel density estimator, $m$-dependent U-statistics, weakly negatively dependent random variables, $m$-dependent random samples in high dimensions, a class of Markov chains, and sample quantiles, respectively).

In this paper, we focus on the CLT together with rates  for certain classes of non-linear statistics of the form $F_N(X_1,\cdots,X_N)$ where $\{X_j\}$ is the underlying sequence that represents the sample. Here by non-linear we mean that $F_N$ is not a sum of functions of the individual $X_j$'s. Classical examples include $U$-statistics, studentized sample means and others. Let us note that in all the results discussed above, it is either assumed that the sequence $\{X_j\}$ is independent, or that it is stationary (and weakly mixing). However, in real-life models, both independence and stationarity impose a strong restriction. Indeed, due to obstructions like random noise or external forces it is more common to describe a physical system by a non-stationary sequence. 

In Bentkus et al. \cite{bentkus1997berry} the authors proved CLT rates for 
  for a wide class of statistics formed by weakly dependent exponentially fast mixing stationary sequences $\{X_j\}$. In this paper we extend \cite{bentkus1997berry} to ensure CLT rates for a broader class of statistics with assumptions that are relatively easy to prove in applications. 
To be more precise, we will prove CLT rates for weakly dependent non-stationary sequences $\{X_j\}$. That is, we obtain Berry-Esseen bounds for a class of statistics of sequences of $\phi$-mixing, non-stationary random variables with polynomial mixing rates. In \cite{bentkus1997berry}, Bentkus et al. proved Berry-Esseen bounds for statistics of sums of weakly dependent, stationary random variables; they assumed $\phi$-mixing (uniformly strong mixing) triangular arrays with exponential mixing rates. In this paper, we extend the main theorem of  \cite[Theorem 1.1]{bentkus1997berry}  to obtain the same Berry-Esseen rates for $\phi$-mixing sequences of non-stationary random variables 
for polynomial mixing rates of the order $O(N^{-p})$ for a sufficiently large $p>0$. We also show that by making small modifications to the proof of Theorem \ref{Berry-Esseen Bounds for Statistics of Weakly Dependent Samples Proof} in our paper, we can prove a few strict generalizations of Theorem 1.1 in Bentkus et al. \cite{bentkus1997berry}.

 Recently limit theorems for partial sums of non-stationary weakly dependent sequences have been studied extensively, see \cite{dolgopyat2023berry, hafouta2021convergence, dolgopyat2025berry} and references therein. From that point of view we prove CLT with rates for wide classes of  non-stationary weakly dependent sequences, but instead of considering partial sums we consider more general statistics.

As noted above, dropping the stationarity assumption is also useful for practitioners, as many natural statistical and physical processes are \textit{not} stationary. For example, random walks with and without drift, and time-inhomogeneous Markov chains are non-stationary.  
It is difficult to transform non-stationary processes into stationary processes in general (in particular, stationary and non-stationary sequences do not necessarily have the same convergence rates, or may not converge at all). Also, we only require polynomial mixing rates, which is a strictly weaker assumption than the exponential mixing rates assumed in Bentkus et al. \cite{bentkus1997berry}. %We make the assumption that the variance grows linearly, which is immediately satisfied for stationary sequences.

We argue that the imposed mixing conditions are not so restrictive as to preclude real-world, non-stationary processes. As justification, in Section \ref{Applications of Main Theorem section}, we consider applications of Theorem \ref{Berry-Esseen Bounds for Statistics of Weakly Dependent Samples Proof} in Section \ref{Applications of Main Theorem section} for various classes of random variables. We also demonstrate that in certain applications, our results hold with much weaker restrictions on the growth rates of the variance assumption (under which our main results are formulated), see Section  \ref{Applications of Main Theorem section}.

\section{Preliminaries and main results under linear growth of variance}

Let $\{X_i\}_{i\in \mathbb{Z}^{+}}$ be a sequence (or array) of random variables taking values in an arbitrary measurable space $(\mathcal{X}, \mathcal{A})$, and    denote by $\sigma[a,b]$ the $\sigma$-algebra generated by the random variables $X_{j}$ such that $j\in [a, b]\cap \mathbb{Z}^+$.    As a measure of dependence between random variables in this sequence, for each $m\in \mathbb{Z}^+$ define \begin{align}
    \phi(m) := \sup_{k\in \mathbb{Z}^{+}}\sup_{A\in\sigma[1,k]}\sup_{B\in \sigma[k+m, +\infty)}\left|\bbP(B\,|\, A) - \bbP(B)\right|.\label{phi mixing definition}
\end{align}
$\phi(m)$ is referred to as the $m$-th $\phi$-mixing coefficient.%\footnote{There are other ways in which the everyday concept of mixing  is quantified in the literature. The relevant forms of mixing measure the dependence of an indexed family of random variables, and have measure-theoretic definitions. 
%Furthermore, for Decoupling Inequality \ref{Decoupling}, we could have used stronger mixing conditions, such as $\psi$-mixing or $m$-dependence (see \cite{bradley2005basic} for details about the different types of mixing). $\phi$-mixing coefficients were thus used in \cite{bradley2005basic} for greater generality, while still ensuring that Lemma \ref{Decoupling} is valid. For example, $\alpha$-mixing (strong mixing) sequences do \textit{not} satisfy Lemma \ref{Decoupling}.}.  

 We assume that for all $m\geq 1$, \begin{align*}
\phi(m)\leq Km^{-p}
\end{align*} for a constant $K>0$ that is independent of $N$, and a sufficiently large $p>0$. Roughly speaking, these polynomial rates ensure that the dependence between the random variables in the sequence decays fast enough that functions of random variables far enough apart in the sequence only differ in expectation from their independent copies by a controlled amount.%\footnote{Notably, $\phi(m)\to 0$ as $m\to +\infty$, so we say that $\{X_i\}_{i\in \mathbb{Z}^{+}}$ is ``$\phi$-mixing" or ``uniformly strong mixing." Thus, we can think of mixing processes as being asymptotically independent, in a certain sense (in this case in terms of $\phi(m)$).
%}.  

\subsection{Statement of Berry-Esseen Theorem for Statistics of Non-Stationary, Weakly Dependent Samples}
We set up the main theorem now before stating it. Let $\{X_j\}_{j\in \mathbb{Z}^{+}}$ be a sequence of random variables taking values in an arbitrary measurable spaces $(\mathcal{X}_j, \mathcal{A}_j)$. We consider statistics $T = t(X_1, \cdots,X_N)$  of the form,
\begin{align*}
    T = S+R \text{, where } S =S_N= \sum_{j=1}^N g_j(X_j)
\end{align*}
for some $g_j: \mathcal{X}_j \to \mathbb{R}$ such that $\mathbb{E}[g_j(X_j)]=0$ for each $j$, $R = R_N(X_1,\cdots,X_N)$ is the remainder term, and $S$ is the linear component of the statistic.  In this representation, $t$, $T$, $S$, $g_j$, and $R$ may depend on $N$, but this dependence will be left implicit. Here, $t=t_N$ is a real-valued function of $N$ variables.

We now impose some restrictions on $R$ and $g_j$. Define $\rho_k(N) := \sup_{j\in\{1, \cdots, N\}}\mathbb{E}[|g_j(X_j)|^k]$, and define $\sigma^2_N:= \mathbb{E}[S^2]$. We make the following assumptions:\\

\noindent\fbox{\begin{minipage}{\textwidth}
\paragraph{Assumptions} 
\begin{enumerate}
    \item $\phi(m)\leq Km^{-p}$ for all $m\in \mathbb{Z}^{+}$, where $K>0$ is a constant independent of $N$ and $p>0$ is sufficiently large.
\item The variance of $S_N=\sum_{j=1}^N g_j(X_j)$ grows linearly over $N$; that is, there exist positive constants $b$ and $\Tilde{b}$ and an $N_0\in \mathbb{Z}^{+}$ such that $\tilde{b}N \leq \text{Var}(S_N) \leq bN$ for all $N\geq N_0$. \label{assumption 2}
\item There exists $\rho>0$ such that $\sup_{N\in \mathbb{N}}\rho_3(N) \leq \rho<+\infty$.
\item There exists $\Sigma>0$ such that $\sigma_N^2= \text{Var}(S_N)\geq \Sigma^2>0$.
\end{enumerate}
\end{minipage}}

Note that the supremum over $\rho_3$ is required due to the implicit dependence of $g_j$ on $N$. These conditions ensure that the mixing occurs fast enough to ensure Berry-Esseen convergence rates.

We state the main theorem of this paper now.

\begin{theorem}\label{Berry-Esseen Bounds for Statistics of Weakly Dependent Samples Proof} {\normalfont \textbf{(Berry-Esseen Bounds for Statistics of $\phi$-Mixing, Non-Stationary Sequences, with Polynomial Rates and Linear Variance)}} 

Assume that the above four assumptions hold. For each $1\le j\leq k\leq N$, let $$R_{j,k} = R_{j,k}(X_1,\cdots, X_{j-1}, X_{k+1}, \cdots,X_N),\hspace{.1in} (j, k \in \{1,\cdots,N\}),$$ denote a $\sigma^C[j,k]$-measurable random variable such that $R_{j,j-1} = R$ for all $2 \leq j\leq N$. For each 
$N\in\mathbb{Z}^{+}$, define, for any $\varepsilon >0$, \begin{align*}
    \gamma=\gamma(N) := \max\left\{ \mathbb{E}\left[\left|R_{j,k} - R_{j,k-1}\right|^{3/2}\right]^{2/3}: |j-k|\leq N^{\varepsilon}  \text{ and } 1\leq j\leq k \leq N \right\}.
\end{align*} Then, for a choice of $p$ large enough, we have,
\begin{align*}
    \sup_{x\in \mathbb{R}}\left|\bbP(T<\sigma_{N}x) -\Phi(x)\right|\leq AN^{-1/2+\varepsilon_p} + AN^{-1/2}\,\mathbb{E}\left[R\right] + A\gamma N^{\varepsilon_p},
\end{align*}  where $A = A(p, K, \Sigma, \rho)$ is a constant independent of $N$ and $\varepsilon_p\to 0$ as $p\to\infty$ (both $A$ and $\varepsilon_p$ can be recovered from the proof).
\end{theorem}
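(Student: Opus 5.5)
We follow the Stein-method architecture of Bentkus, G\"otze and Tikhomirov \cite{bentkus1997berry}, replacing exponential mixing by the polynomial bound $\phi(m)\le Km^{-p}$ and replacing stationarity by the uniform bounds of Assumptions 2--4. Fix a block length $m=\lfloor N^{\varepsilon}\rfloor$. Write $\xi_j=g_j(X_j)/\sigma_N$ and $W=S/\sigma_N$, so $T/\sigma_N=W+R/\sigma_N$. We then make two reductions.

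\textbf{Step 1: reduce to a smoothed Kolmogorov distance.} Pass from indicators $\mathbbm{1}\{\cdot<x\}$ to smooth test functions $h$ (Lipschitz, with $\|h'\|_\infty\lesssim 1/\delta$). This costs an error of order $\delta$ plus the concentration of $W$ on intervals of length $\delta$.

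\textbf{Step 2: handle the remainder.} Control $R$ by a Taylor expansion in $R/\sigma_N$. The leading contribution is $\sigma_N^{-1}\mathbb{E}[f'(W)R]$, where $f$ is the Stein solution for $h$. This contribution is not bounded crudely. Instead we telescope
\[
R=\sum_k (R_{j,k}-R_{j,k-1})+R_{j,j+m},
\]
starting from $R_{j,j-1}=R$. The terms $R_{j,k}$ are independent of the block $X_j,\dots,X_k$, so inside $\mathbb{E}[f'(W)R]$ they can be decoupled from the local part of $W$. The increments are bounded in $L^{3/2}$ by $\gamma$. Combined with Hölder against $\xi_j\in L^3$, this yields the term $A\gamma N^{\varepsilon_p}$, where the factor $N^{\varepsilon_p}$ counts the roughly $m^2$ increments per site after normalization. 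The residual $\mathbb{E}[R]$ appears from the unconditional part, giving $AN^{-1/2}\mathbb{E}[R]$.

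\textbf{Step 3: Stein's method for the linear part.} Write
\[
\mathbb{E}[Wf(W)]=\sum_j \mathbb{E}[\xi_j f(W)],
\]
and let $W^{(j)}=W-\sum_{|i-j|\le m}\xi_i$. Expand $f(W)$ around $W^{(j)}$ to second order. The key technical tool is a decoupling inequality for $\phi$-mixing sequences: for $U$ measurable with respect to $\sigma[1,k]$ and $V$ measurable with respect to $\sigma[k+m,\infty)$, we have $|\mathbb{E}[UV]-\mathbb{E}[U]\,\mathbb{E}[V]|\le 2\phi(m)^{1-1/r}\|U\|_r\|V\|_s$ (Ibragimov's inequality), and a version of it for bounded $V$. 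We apply this to remove dependence between $\xi_j$ (or local products $\xi_j\xi_i$) and $f(W^{(j)})$ or $f'(W^{(j)})$. Each application costs a factor $\phi(m)\lesssim N^{-\varepsilon p}$, while the total count of such terms is polynomial in $N$. Hence the decoupling errors are $O(N^{-1/2})$ once $p\varepsilon$ is large, and we then choose $\varepsilon=\varepsilon_p\to0$. After decoupling, the first-order term reproduces $\mathbb{E}[f'(W)]\cdot\sum_j\mathbb{E}[\xi_j\sum_{|i-j|\le m}\xi_i]$. This sum equals $1+O(N^{-1/2})$, since $\sigma_N^2=\sum_{i,j}\mathbb{E}[\xi_i\xi_j]$ and the long-range covariances are bounded by $\phi(m)^{1/3}\rho^{2/3}$. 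Here Assumptions 2 and 4 give $\sigma_N^2\asymp N$ uniformly in $N$. The second-order terms are bounded by $\sum_j\mathbb{E}\bigl[|\xi_j|\bigl(\sum_{|i-j|\le m}|\xi_i|\bigr)^2\bigr]\lesssim N m^2\rho/\sigma_N^3\lesssim N^{-1/2+2\varepsilon}$, using Assumption 3. These terms, however, involve $f''$, which is unbounded for nonsmooth $h$. Handling them requires the smoothing from Step 1.

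\textbf{Step 4: main obstacle and how to close it.} The main obstacle is the concentration inequality needed in Step 1 and in the $f''$ terms. We need $\mathbb{P}(a\le W^{(j)}\le a+\delta)\lesssim\delta+N^{-1/2+\varepsilon_p}$, uniformly over $j$ and over the removed blocks, without stationarity. We would prove it by the same block-decoupled Stein argument applied to the test function $f(x)$ that is linear on $[a-\delta,a+2\delta]$ and constant outside, as in Chen--Shao's concentration approach. The variance lower bound is obtained from Assumption 2, which also controls $\operatorname{Var}(W^{(j)})$ after block removal. Alternatively one can run a recursive (Bolthausen-type) induction on $N$ for the Kolmogorov distance itself. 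This is where the non-stationarity matters, since all constants must depend only on $(K,p,\Sigma,\rho)$. Finally, we choose $\delta=N^{-1/2}$ and collect the errors, which yields the stated bound with $\varepsilon_p$ of order $1/p$ from the requirement $\phi(N^{\varepsilon})N^{C}\le N^{-1/2}$.
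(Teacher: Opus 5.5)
\textbf{The gap is in Step 2, the nonlinear remainder.} That step is what the theorem contains beyond the linear case, and as written it does not work. Your overall architecture differs from the paper's: the paper runs Tikhomirov's form of Stein's method on $f(t)=\mathbb{E}e^{itT/\sqrt N}$, shows $f'(t)=-tf(t)+\varepsilon(t)f(t)+\varepsilon_0(t)$ for $|t|\le a\sqrt N/m^2$, and finishes with Esseen's smoothing inequality. Your treatment of the linear part (Step 3 plus a concentration bound for $W^{(j)}$) is a reasonable real-space substitute for that.

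The problems with Step 2 are these.
\begin{itemize}
\item A Taylor expansion in $\Delta=R/\sigma_N$ cannot close when only $\mathbb{E}|R|$ and $\gamma$ are available. The second-order remainder is of size $\sup|f''|\,\mathbb{E}\Delta^2$. This involves a moment of $R$ you do not control, and $f''$ contains a piece of size $\delta^{-1}=N^{1/2}$.
\item The term you call leading, $\sigma_N^{-1}\mathbb{E}[f'(W)R]$, is trivially at most $\|f'\|_\infty\mathbb{E}|R|/\sigma_N$, so the telescoping is aimed at the wrong term. Also, $\xi_j$ does not appear in $\mathbb{E}[f'(W)R]$, so H\"older against $\xi_j\in L^3$ cannot produce $\gamma$ from it.
\item Where $\gamma$ is really needed is the interaction $\sum_j\mathbb{E}[\xi_j(f(T')-f(T'_j))]$. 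Here $T'=T/\sigma_N$, $T'_j=W^{(j)}+Q_j/\sigma_N$, and $Q_j$ is a remainder not depending on a block around $j$. Write $V_j=\eta_j+\Delta-Q_j/\sigma_N$ with $\eta_j=\sum_{|i-j|\le m}\xi_i$.
\item The piece $\|f'\|_\infty\sum_j\|\xi_j\|_3\|\Delta-Q_j/\sigma_N\|_{3/2}\lesssim m\gamma$ is fine. But you must still show $\sum_j\mathbb{E}\bigl[\xi_j\eta_j\int_0^1 f'(T'_j+sV_j)\,ds\bigr]\approx\mathbb{E}f'(T')$. Bounding $f''$ by $\delta^{-1}$ there, with $\delta=N^{-1/2}$, gives errors of order $m^2$ and $N^{1/2}m^2\gamma$, and neither is small.
\item Doing better requires concentration of the nonlinear quantities $T'_j+sV_j$ on intervals of length $\delta$. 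Their position depends on the whole sample through $R$. Your Step 4 inequality for $W^{(j)}$ on deterministic intervals does not give this. Conditioning on local variables does not help either, because $R-Q_j$ is not local.
\end{itemize}
The standard real-space repair is a randomized concentration inequality of Chen--Shao type, which you would also have to prove for $\phi$-mixing sequences. It naturally yields a term like $\mathbb{E}|W\Delta|$ rather than $\mathbb{E}|\Delta|$, so it proves a different bound from the one stated.

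\textbf{How the paper avoids this.} Its test functions $e^{itx}$ have derivative bounded by $|t|$ uniformly in $x$.
\begin{itemize}
\item The term $\frac{i}{\sqrt N}\mathbb{E}[Re^{itT/\sqrt N}]$ is bounded by $\mathbb{E}|R|/\sqrt N$ outright, with no expansion.
\item Replacing $T/\sqrt N$ by $S/\sqrt N$ plus the normalized $Q_j=R_{j-rm,j+rm}$ costs only $C|t|N^{-1/2}\sum_j\|g_j(X_j)\|_3\|\delta_j\|_{3/2}\lesssim|t|m^2\gamma$, by H\"older with exponents $3$ and $3/2$.
\item The powers of $|t|$ are absorbed by the ODE on $|t|\le a\sqrt N/m^2$ and by the factor $1/|t|$ in the smoothing inequality.
\end{itemize}
So no concentration inequality for $W$ or for $T$ is ever needed.

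\textbf{A smaller inaccuracy.} $R_{j,k}$ is not independent of $X_j,\dots,X_k$. It is only $\sigma^C[j,k]$-measurable and may depend on $X_{j-1}$ and $X_{k+1}$, so decoupling it from the local part of $W$ needs a buffer. This is why the paper removes a block of radius $rm$ centred at $j$ and starts the telescoping from $R_{j-rm,j-rm-1}=R$, rather than using a one-sided block $[j,j+m]$.
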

\begin{remark}
This theorem is a generalization of \cite[Theorem 1.1]{bentkus1997berry}.
If we assume that $\phi(m)$ decays exponentially fast as $m\to\infty$ then we can recover the logarithmic rates in 
 \cite[Theorem 1.1]{bentkus1997berry}. That is, we can show that \begin{align}
    \sup_{x\in \mathbb{R}}\left|\bbP(T<\sigma_{N}x) -\Phi(x)\right|\leq AN^{-1/2}(\log \,(N))^2 + AN^{-1/2}\,\mathbb{E}\left[R\right] + A\gamma\,(\log\, (N))^3,\label{Main Theorem for exponential rates}
\end{align}
where in the definition of $\gamma$ we can replace $N^\varepsilon$ with $(\log\,(N))^3$.
\end{remark}
 As noted in the previous section, in Sections \ref{Stationary extension section} and \ref{Bounded} under appropriate conditions we are able to remove the growth rate $N$ of the variance. However, this is done by an application of the above theorem, and so we decided to formulate the main result under the linear growth assumption. %Let us note that without some growth assumptions on the variance the only results in literature that seem to provide some CLT rates for weakly dependent variables are \cite{hafouta2023convergence}

The proof of  \cite[Theorem 1.1]{bentkus1997berry} only uses the fact that the random variables are stationary in the latter half of their proof. As such, the basic structure and several of the arguments in the proof of Theorem \ref{Berry-Esseen Bounds for Statistics of Weakly Dependent Samples Proof} in this paper are left either fundamentally unaltered or are modified from the proof of  \cite[Theorem 1.1]{bentkus1997berry}. Furthermore, only minor changes have been made to the notations of \cite{bentkus1997berry}, with most of the modifications arising from the challenge of considering non-stationary sequences. We state the setup for the proof of Theorem \ref{Berry-Esseen Bounds for Statistics of Weakly Dependent Samples Proof} now.

The proof of the above Berry-Esseen theorem  section uses a modification of Stein's method, in which we prove that the ordinary differential equation,\begin{align}\label{Perturbation}
f'(t) = -tf(t) + \varepsilon(t)f(t) + \varepsilon_0(t), \text{    } f(0)=1,
    \end{align} is satisfied for a pair of ``sufficiently small" functions $\varepsilon(t)$ and $\varepsilon_0(t)$ over an appropriate interval around 0.
    This differential equation can be viewed as a perturbation of the differential equation solved by the standard normal characteristic function, \begin{align}\label{Normal Char Func Diffeq}
        f'(t) = -tf(t) \text{,       } f(0)=1.
    \end{align} It is known that if $f$  solves Equation \ref{Perturbation} on a ``large enough" interval of $t$, then  $f$ also uniquely solves the differential equation for the normal distribution (Equation \ref{Normal Char Func Diffeq}). The setup for this application of Stein's method is identical to that of \cite{bentkus1997berry}, and thus the detailed proof will not be included. The version of Stein's method used by Bentkus et al. in \cite{bentkus1997berry}, and that we use in the proof of Theorem \ref{Berry-Esseen Bounds for Statistics of Weakly Dependent Samples Proof}, was developed by modifying the techniques used by Tikhomirov \cite{tikhomirov1980rate} and Stein \cite{stein1972bound}, among others.

In the proof of Theorem \ref{Berry-Esseen Bounds for Statistics of Weakly Dependent Samples Proof} in our paper, we do not use the definition (\ref{phi mixing definition}) of $\phi$-mixing directly, and instead use the following decoupling inequality, valid for $\phi$-mixing sequences. Recall that the expectations for linear statistics of random variables can be decoupled if they are independent, and expectations of functions of independent random variables can more easily be calculated.
 This lemma allows us to quantify the fact that the expectation of functions of the random variables that are ``almost" independent are almost the same as that of their independent copies. The following lemma says that a $\phi$-mixing sequence obeys this property, up to a ``small enough" error term. 

We state the decoupling lemma that we will use now.
\begin{lemma}[Decoupling Inequality]\label{Decoupling}
        Let $\xi$ be a $\sigma[1, k]$-measurable random variable and let $\eta$ be a $\sigma[k+m, +\infty)$-measurable random variable, in the same $\phi$-mixing family $\{X_j\}_{j\in \mathbb{Z}^{+}}$. Also assume that $\eta$ and $\xi$ take values in a Polish space\footnote{This condition is more general than we require. In our proof, each of our choices of $\eta$ and $\xi$ are complex-valued, and $\mathbb{C}$ is a Polish space.}. Let $\hat{\eta}$ denote an identically distributed copy of $\eta$ that is also independent of $\xi$, and let $\nu$ be an arbitrary measurable function such that $\sup_{u,v}| \nu(u,v) |\leq D$. Then, \begin{align*}
            \left|\mathbb{E}\left[\nu(\xi, \eta)\right] - \mathbb{E}\left[\nu(\xi, \hat{\eta})\right]\right|\leq D\phi(m).
        \end{align*}
    \end{lemma}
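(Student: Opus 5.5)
The plan is to disintegrate with respect to $\xi$ and reduce the claim to a total-variation estimate for the conditional law of $\eta$ given $\xi$. Write $\mu$ for the law of $\eta$ and $P_\xi$ for the law of $\xi$. Because $\eta$ takes values in a Polish space, a regular conditional distribution $\mu_x(\cdot)=\bbP(\eta\in\cdot\,|\,\xi=x)$ exists. Since $\hat\eta$ is independent of $\xi$ with law $\mu$, Fubini gives
\begin{align*}
\mathbb{E}[\nu(\xi,\eta)]-\mathbb{E}[\nu(\xi,\hat\eta)]=\int\left(\int \nu(x,y)\,\bigl(\mu_x-\mu\bigr)(dy)\right)P_\xi(dx).
\end{align*}
It therefore suffices to bound the inner integral uniformly for $P_\xi$-almost every $x$.

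\textbf{Step 1 (mixing gives uniform closeness of $\mu_x$ to $\mu$).} Fix $B$ in the Borel $\sigma$-algebra of the range of $\eta$. The event $\{\eta\in B\}$ lies in $\sigma[k+m,\infty)$, and every event $\{\xi\in E\}$ with positive probability lies in $\sigma[1,k]$. The definition (\ref{phi mixing definition}) then gives
\begin{align*}
\left|\int_E(\mu_x(B)-\mu(B))\,P_\xi(dx)\right|\le \phi(m)\,P_\xi(E).
\end{align*}
Taking $E=\{x:\mu_x(B)-\mu(B)>\phi(m)\}$, and then the analogous set with the inequality reversed, shows that $|\mu_x(B)-\mu(B)|\le\phi(m)$ for $P_\xi$-a.e. $x$. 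Using a countable generating algebra of the Polish space and a monotone class argument, the exceptional null set can be chosen independently of $B$. The result is $\sup_B|\mu_x(B)-\mu(B)|\le\phi(m)$ for a.e. $x$.

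\textbf{Step 2 (integrating the bounded function).} For fixed $x$, take the Hahn decomposition of $\mu_x-\mu$ with positive set $B_x^+$. Since $\mu_x-\mu$ has total mass zero, its positive and negative parts both have mass $s_x=(\mu_x-\mu)(B_x^+)\le\phi(m)$. For $|\nu|\le D$ this yields
\begin{align*}
\left|\int\nu(x,y)(\mu_x-\mu)(dy)\right|\le \bigl(\sup_y\nu(x,y)-\inf_y\nu(x,y)\bigr)s_x.
\end{align*}
This is the main obstacle. The right-hand side is at most $D\phi(m)$ only when the oscillation of $\nu(x,\cdot)$ is at most $D$, for instance when $0\le\nu\le D$. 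A real $\nu$ with $|\nu|\le D$ can be written as $\nu=\nu'-D$ with $0\le\nu'\le 2D$; the constant $D$ cancels in the difference, so this route gives $2D\phi(m)$. A complex $\nu$ is handled by treating real and imaginary parts separately.

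I would first try to remove the factor $2$ by exploiting the fact that $P-P_\xi\otimes\mu$ has vanishing marginals, but I do not expect this to succeed. Take $\xi=\eta$ uniform on $\{0,1\}$ and $m$ large enough to separate their indices. Then (\ref{phi mixing definition}) gives $\phi(m)=1/2$. With $\nu(u,v)=\mathbbm{1}_{u=v}-\mathbbm{1}_{u\neq v}$, so $D=1$, the left-hand side equals $1=2D\phi(m)$. So Steps 1--2 deliver the lemma exactly as worded only under the reading that $D$ bounds the oscillation of $\nu$ (for example $0\le\nu\le D$), or with $D$ replaced by $2D$. In the later applications $\nu$ is a product of characteristic-function factors, and a factor $2$ only changes the constant $A$ in Theorem \ref{Berry-Esseen Bounds for Statistics of Weakly Dependent Samples Proof}, so either version suffices there.
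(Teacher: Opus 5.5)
The paper never proves this lemma: it is stated and then invoked, so there is no argument of the authors to compare yours against, and your proposal has to stand on its own.

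It does. Disintegrating the law of $(\xi,\eta)$ as $P_\xi(dx)\,\mu_x(dy)$ is legitimate because $\eta$ is Polish-valued. You then test the definition of $\phi(m)$ on the events $\{\xi\in E\}\in\sigma[1,k]$ and $\{\eta\in B\}\in\sigma[k+m,\infty)$. Choosing $E=\{x:\mu_x(B)-\mu(B)>\phi(m)\}$ forces $\mathbb{P}(\xi\in E)=0$, since the integrand strictly exceeds $\phi(m)$ on $E$. Next, take the union of the null sets over a countable generating algebra. The class $\{B:|\mu_x(B)-\mu(B)|\le\phi(m)\}$ is monotone because $\mu_x$ and $\mu$ are probability measures. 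Together these give $\sup_B|\mu_x(B)-\mu(B)|\le\phi(m)$ for $P_\xi$-a.e. $x$. The Hahn-decomposition estimate in Step 2 is then correct. This is the standard proof of the $\phi$-mixing decoupling inequality.

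Your objection to the constant is also correct. Under the hypothesis $\sup|\nu|\le D$ the sharp bound is $2D\phi(m)$. The bound $D\phi(m)$ holds only when $D$ bounds the oscillation of $\nu(x,\cdot)$. Your example checks out: $X_j\equiv Z$ with $Z$ uniform on $\{0,1\}$ gives $\phi(m)=1/2$ and a left-hand side equal to $1$.

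If one objects that this sequence is not mixing, the same computation works for a genuinely mixing sequence. Let $X_1=\xi$ and $X_{1+m_0}=\eta$ be uniform on $\{0,1\}$ with $\mathbb{P}(\xi=\eta)=(1+c)/2$, and let all other coordinates be independent. Then $\phi(m)=c/2$ for $m\le m_0$ and $\phi(m)=0$ for $m>m_0$, while the left-hand side at lag $m_0$ equals $c=2\phi(m_0)$.

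One small refinement: for complex $\nu$ you need not split into real and imaginary parts, which costs an extra factor. Directly, $\bigl|\int\nu(x,y)\,(\mu_x-\mu)(dy)\bigr|\le D\,|\mu_x-\mu|(\mathcal{Y})=2Ds_x\le 2D\phi(m)$, where $\mathcal{Y}$ is the range of $\eta$.

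As you say, the factor $2$ is harmless for the paper. In the places where the lemma is invoked, namely \eqref{4.9 Inequality}, \eqref{4.11 Inequality}, the far-covariance bound \eqref{far terms}, \eqref{Using phi mixing third time}, and \eqref{conditioning on Xj inequality}, it is simply absorbed into the constants.
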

This lemma is general enough to provide bounds for non-linear statistics, which will be necessary in the proof of Theorem \ref{Berry-Esseen Bounds for Statistics of Weakly Dependent Samples Proof}. 

\section{Proof of Theorem \ref{Berry-Esseen Bounds for Statistics of Weakly Dependent Samples Proof}}
 Denote by $\sigma[a,b]$ the $\sigma$-algebra generated by the random variables $X_{j}$ such that $j\in [a, b]$, and denote by $\sigma^C[a,b]$ the $\sigma$-algebra generated by the random variables $X_{j}$ such that $j\in \{1,\cdots,N\} \setminus [a, b]$. If $b<a$, then set $\sigma^C[a, b] = \sigma(X_1,\cdots, X_N)$. In the proof of Theorem \ref{Berry-Esseen Bounds for Statistics of Weakly Dependent Samples Proof}, we let $a$, $A$, $A_1$, $C$, $C_1$, etc., denote positive constants that are independent of $N$ (and may be chosen implicitly in the proof) and depend on the parameters of interest ($K$, $\rho$, $\Sigma$, $L$, $\gamma$, etc.). The numerical values of the constants that are chosen throughout the proof will not be tracked (and will be dependent upon the application). In the proof of Theorem \ref{Berry-Esseen Bounds for Statistics of Weakly Dependent Samples Proof}, we treat $p$ as being  a fixed, sufficiently large constant.

\textbf{Proof of Theorem \ref{Berry-Esseen Bounds for Statistics of Weakly Dependent Samples Proof}}: While proving this theorem, we shall assume that $|g_j(X_j)| \leq \sqrt{N}$ for each $N\in \mathbb{Z}^{+}$ and each $j$. Otherwise, we replace $g_j(X_j)$ by, 
$$
g_j(X_j)\mathbbm{1}_{\left\{\left|g_j(X_j)\right|\leq \sqrt{N}\right\}}-\mathbb{E}\left[g_j(X_j)\mathbbm{1}_{\left\{\left|g_j(X_j)\right|\leq \sqrt{N}\right\}}\right].
$$ 
We refer to Section \ref{TruncSec} for the exact details of this truncation procedure.

Let $a$ denote a sufficiently small positive constant that may depend only on $p$, $K$, $\Sigma$, and $\rho$ (a sufficiently small choice of $a$ will be chosen throughout the proof). Also, define $f(t) := \mathbb{E}[e^{itT/\sqrt{N}}]$ and $\varphi(t) := e^{-t^2/2}$. 

To simplify the arguments,  assume that $\sigma_N^2 = \mathbb{E}[S^2] = N$. Define the ``step size" $m := A_{p} N^{\epsilon_{p}}$, where $A_{p}$ is a sufficiently large constant depending only on $p$, and $\epsilon_{p}$ is a small positive constant depending only on $p$. We will show that $f$ over the interval $|t|\leq a\sqrt{N}/m^2$ (assuming without loss of generality that $N\geq 2$) $f(t) $ satisfies the ordinary differential equation, \begin{align}
f'(t) = -tf(t) + \varepsilon(t)f(t) + \varepsilon_0(t), \text{    } f(0)=1, \label{Perturbed Normal diffeq}
    \end{align}
where \begin{align}
    |\varepsilon(t)| \leq \frac{Am^2t^2}{\sqrt{N}}, \label{epsilon}
\end{align} and \begin{align}
    \left|\varepsilon_0(t)\right|\leq A |t|m^2\gamma + A\mathbb{E}\left[|R|\right] + Am|t|N^{-1/2} + AmN^{-1/2} + AN^{-1/2}\label{epsilon_0}.
\end{align}
Differential Equation \eqref{Perturbed Normal diffeq} has the unique solution
\begin{align*}
    f(t) = \varphi(t)\exp \left\{\int_{0}^t \varepsilon(u) \,du\right\} + \varphi(t)\int_{0}^{t}\exp \left\{\frac{u^2}{2} +\int_{u}^{t} \varepsilon(z)\,dz  \right\}\varepsilon_0 (u)\, du.
\end{align*} Then $|f(t) - \varphi(t)|\leq I_1 + I_2$ by the triangle inequality,
where \begin{align}
    I_1 &:= \varphi(t)\left|e^{\int_{0}^t \varepsilon(u) \,du} - 1\right|\\
    &\leq Am^2N^{-1/2}t^2e^{-t^2/4},\label{I_1}
\end{align} (by the mean value theorem) and \begin{align}
    I_2 &:= \varphi(t)\int_{0}^{t}\exp \left\{\frac{u^2}{2} +\int_{u}^{t} \varepsilon(z)dz  \right\}|\varepsilon_0 (u)|\,du\\
    &\leq \left[A\left(m^2 + mN^{-1}\right)\,\gamma \min\, \{1, |t|\}\right] + AN^{-1/2}\left(\mathbb{E}\left[|R|\right] +1 \right)\min\left\{|t|^{-1}, |t|]\right\} .\label{I_2}
\end{align} Combining Inequalities \ref{I_1} and \ref{I_2} for $I_1$ and $I_2$ with the Berry-Esseen theorem for characteristic functions (i.e., the smoothing inequality), we get the result of the theorem\footnote{To bound  Inequalities \ref{I_1} and \ref{I_2} we must split the domain of the integrals into the regions $[0,(\sqrt{N}/m^2)^{1/3}]$ and $((\sqrt{N}/m^2)^{1/3}, a\sqrt{N}/m^2]$. The integral over the first interval is controlled by a crude bound. The argument is unchanged from Bentkus et al. \cite{bentkus1997berry}, as it does not rely on stationarity.}. Hence, we just need to show that $f$ satisfies the ordinary differential equation \ref{Perturbed Normal diffeq}. To verify that Inequality \ref{I_1} is valid, we may apply the  inequality $|e^t - 1| \leq |t|e^{|t|}$ on the real interval $|t|\leq a\sqrt{N}/ m^2$ for a sufficiently small $a$. Indeed, $$\left|\int_{0}^t\varepsilon(u)\,du\right|\leq \left|\int_{0}^t 
Am^2t^2N^{-1/2}\, du\right| = Am^2\frac{t^3}{3}N^{-1/2} = t\left(Am^2\frac{t^2}{3}N^{-1/2}\right).$$ Also, for a sufficiently small choice of $a$,
\begin{align*}
\exp\left\{\left|\int_{0}^t\varepsilon(u)\,du\right|\right\}\leq e^{t^2/12},
\end{align*} which implies this result.

To verify that Inequality \ref{I_2} is valid, we use the fact that,\begin{align*}\left|\int_{u}^{t}\varepsilon(z)\,dz\right| \leq \Eval{\frac{Am^2z^3}{3\sqrt{N}}}{u}{t}  \leq aA(t^2-u^2)<\frac{1}{4}(t^2-u^2),
\end{align*} which we derive from the facts that $|t|\leq a\sqrt{N}m^{-2}$, $|\varepsilon(t)| \leq \frac{Am^2t^2}{\sqrt{N}}$, and that $a$ is chosen small enough so that $aA<1/4$.

Furthermore, to obtain the desired result we now only need to show that $T$ satisfies Inequalities \ref{Perturbed Normal diffeq}, \ref{epsilon}, and \ref{epsilon_0}  over the interval $|t|\leq a\sqrt{N}/m^2$.

We write $B\approx D$ if $B = D + \varepsilon(t)f(t) + \varepsilon_0(t)$ on the interval $|t|\leq a\sqrt{N}m^{-2}$, where the functions $\varepsilon$ and $\varepsilon_0$ satisfy Inequalities \ref{epsilon} and \ref{epsilon_0}. In particular, we must  prove that $f'(t)\approx -tf(t)$. 

Differentiating $f$, we get,
\begin{align*}
    f'(t) &= \frac{i}{\sqrt{N}} \,\mathbb{E}\left[Se^{itT/\sqrt{N}}\right] + \frac{i}{\sqrt{N}}\,\mathbb{E}\left[Re^{itT/\sqrt{N}}\right]\\
    &\approx I_3 := \frac{i}{\sqrt{N}}\, \mathbb{E}\left[Se^{itT/\sqrt{N}}\right]
    \\&= \frac{i}{\sqrt{N}}\sum_{j=1}^{N}\mathbb{E}\left[g_j(X_j)e^{itT/\sqrt{N}}\right].
\end{align*} The error in this approximation is bounded by $\mathbb{E}[|R|]/\sqrt{N}$. This approximation corresponds to $\varepsilon_0(t)$.

Now, define $S_{j, 0} := N^{-1/2}S$, and define,\begin{align*}
    \Delta_{j, 1} := N^{-1/2}\sum_{\ell \in \Omega_1^{(j)}}g_\ell(X_{\ell}), \text{   where   } \Omega_1^{(j)} := \{\ell: 1\leq \ell \leq N, \,|\ell - j|\leq m\}.
\end{align*}Define $S_{j,1} := N^{-1/2}S - \Delta_{j, 1}$.  In general, we define,
\begin{align*}
    \Delta_{j, s} := N^{-1/2}\sum_{\ell \in \Omega_s^{(j)}}g_\ell(X_{\ell}), \text{   where   } \Omega_s^{(j)} := \{\ell: 1\leq \ell \leq N,\, (s-1)m < |\ell - j|\leq sm\},
\end{align*}  and define $S_{j, s} := S_{j,s-1} - \Delta_{j,s}$ by induction.  Also, define $r:= C_p \log\, (N)$  for a constant $C_p$ independent of $N$ ($C_p$ will be chosen sufficiently large throughout the proof).
 For each $j$, define, \begin{align*}
    Q_j := R_{j-rm, j+rm},\hspace{.5in} \delta_j:= N^{-1/2}R-Q_j, \hspace{.5in}  T_{j, s}:= S_{j, s} + Q_j.
\end{align*} We will apply the inequality $(a_1 + \cdots + a_p)^{3/2} \leq \sqrt{p}(a_1^{3/2}+\cdots + a_k^{3/2})$ (valid for  $a_j\in \mathbb{R}^+$ and $k\in \mathbb{Z}^+$), with the triangle inequality. We use the assumption that $R = R_{j-rm, j-rm-1}$, and that there are $2rm+1$ summands in the following expression, and that $r<m$ for large enough $N$. This yields,
\begin{align}
    \mathbb{E}\left[|\delta_j|^{3/2}\right] &= \mathbb{E}\Big[\big|(N^{-1/2}R - R_{j-rm, j-rm}) + (R_{j-rm, j-rm} - R_{j-rm, j-rm+1})\\
    &\hspace{.3in}+ \cdots + (R_{j-rm, j+rm-1}-R_{j-rm, j+rm})\big|^{3/2}\Big]\\
    &\leq \sum_{i = -rm}^{rm} \mathbb{E}\left[\left|R_{j-rm, j+i} - R_{j-rm, j+i-1}\right|^{3/2}\right]\\
    &\leq (2rm+1)(2rm+1)^{1/2}\,\gamma^{3/2}\\
    &\leq Am^3(N^{-1/2}\gamma)^{3/2},\label{delta j}
\end{align} valid for a sufficiently large constant $A = A(A_p, C_p)$ and by our choices of $r$ and $m$. Then, by the mean value theorem applied to $e^{\delta_j}$ (centered at 0),
\begin{align*}
    I_3: &=\frac{i}{\sqrt{N}}\sum_{j=1}^{N}\mathbb{E}\left[g_j(X_j)\,e^{itT/\sqrt{N}}\right]\\
    &=\frac{i}{\sqrt{N}}\sum_{j=1}^N \mathbb{E}\left[g_j(X_j)\,e^{it(T_{j, 0} + \delta_j)}\right]\\
    &\approx I_4 := \frac{i}{\sqrt{N}}\sum_{j=1}^{N}\mathbb{E}\left[g_j(X_j)\,e^{itT_{j, 0}}\right].
\end{align*} The error in this approximation is bounded by,
\begin{align*}
    \left|\frac{i}{\sqrt{N}}\sum_{j=1}^{N}\mathbb{E}\left[g_j(X_j)\,e^{it(T_{j, 0} + \delta_j)} - g_j(X_j)\,e^{itT_{j, 0}}\right]\right|&\leq \left|\frac{i}{\sqrt{N}}\sum_{j=1}^{N} \mathbb{E}\left[g_j(X_j)\,e^{itT_{j,0}} \, C\,|\delta_j|\right]\right|\\
   &\leq \frac{C}{\sqrt{N}} \sum_{j=1}^{N} \mathbb{E}\left(\left[|\delta_j|^{3/2}\right]^{2/3}\,\mathbb{E}\left[|g_j(X_j)|^3\right]^{1/3}\right)\\
   &\leq \frac{1}{\sqrt{N}} \sum_{j=1}^{N}\left(A|t|m^2(\gamma N^{-1/2})\right) \rho^{1/3}\\
   &\leq A|t|m^2\gamma.
\end{align*} We obtain the first inequality from the Taylor remainder theorem for powers of $\delta_j$ (i.e. the mean value theorem). The second inequality is obtained by the triangle inequality and an application of the H\"{o}lder inequality with coefficients 3 and 3/2. The third inequality is obtained from an application of Inequality \ref{delta j}, the assumption that $\sup_{N\in \mathbb{N}}\rho_3 \leq \rho<+\infty$, and the fact that $|e^{ix}| = 1$ for all $x\in\mathbb{R}$. The final inequality uses the fact that there are $N$ summands. This approximation corresponds to $\varepsilon_0 (t)$. 

Splitting $T_{j, 0} = T_{j, 1} + \Delta_{j, 1}$, we then have the decomposition,
\begin{align*}
    I_4 = \frac{i}{\sqrt{N}}\sum_{j=1}^{N}\mathbb{E}\left[g_j(X_j)J_0^{(j)} e^{itT_{j, 1}}\right]+\frac{i}{\sqrt{N}}\sum_{j=1}^{N}\mathbb{E}\left[g_j(X_j)J_1^{(j)} e^{itT_{j, 1}}\right], 
\end{align*}where $J_0^{(j)} := 1$ and $J_s^{(j)} := e^{it\Delta_{j, s}}-1$ for each $s\in \{1,\cdots ,N\}$. Repeating this process inductively, we obtain that for each $r\geq 2$ ($r= C_p \log (N) $ will be chosen later for a sufficiently large choice of $C_p$),\begin{align}\label{I_4 Inequality}
    f'(t)\approx I_4 = \sum_{s=1}^{r-1} \sum_{j=1}^{N} I(j, s) + \sum_{j=1}^{N} I_1(j, r), 
\end{align}
where 
\begin{align*}
    I(j, s) &:= \frac{i}{\sqrt{N}}\mathbb{E}\left[g_j(X_j)J_1^{(j)} \cdots J_s^{(j)} e^{itT_{j, s+1}}\right],\end{align*}
and
    \begin{align*}
    I_1(j, r) &:= \frac{i}{\sqrt{N}}\mathbb{E}\left[g_j(X_j)J_1^{(j)} \cdots J_r^{(j)} e^{itT_{j, r}}\right].
\end{align*}

The expectation of $I(j,s)$ is taken over a set of random variables with vertices $\ell$ such that $sm < |\ell - j|\leq (s+1)m$. We can thus apply the decoupling inequality (Inequality \ref{Decoupling}) with the functions 
$$
\xi = g_j(X_j)J_1^{(j)} \cdots J_r^{(j)}
$$
and $\eta = e^{itT_{j, r}}$, and $\nu$ defined by $\nu(\xi, \eta) = \xi\eta$. By our construction, we have that $g_j(X_j)J_1^{(j)} \cdots J_r^{(j)}\in \sigma(X_{\ell}: \ell\in \bigcup_{k=1}^r \Omega_k^{(j)})$ and $e^{itT_{j, r}}\in \sigma(X_{\ell}: \ell\notin \bigcup_{k=1}^{r+1} \Omega_k^{(j)})$. In particular, the distance between the indices $\ell$ in these two $\sigma$-algebras is at least $2m>m$. We also use the facts that $|J_{\ell}^{(j)}|\leq 2$ for all $j$ and $\ell$, and $g_j(X_j)\leq \sqrt{N}$ for each $j$. Thus, because there are $N$ summands for each $s$ in the inner sum, we have, 
\begin{align}
    \sum_{s=1}^{r-1} \sum_{j=1}^{N} I(j, s) \approx \sum_{s=1}^{r-1} \sum_{j=1}^{N} I_2(j, s), \hspace{.2in}\text{ where }  I_2(j,s):= \frac{i}{\sqrt{N}}\mathbb{E}\left[g_j(X_j)J_1^{(j)}\cdots J_s^{(j)}\right]\mathbb{E}\left[e^{itT_{j,s+1}}\right].
\end{align} By Lemma \ref{Product Lemma} for products in strong mixing (or $\phi$-mixing) sequences, the error in this approximation is bounded above by,
\begin{align}
    A \sum_{s=1}^{r-1}N2^s\phi(m)\leq AN2^r\phi(m)\leq AN^{-1/2},\label{4.9 Inequality}
\end{align} for a sufficiently large constant $A = A(A_p, C_p)$, $m = A_p N^{\epsilon_p}$, and $r =  C_p \log \,(N)$, and for a  sufficiently large $p$  (we also make the trivial assumption that $N\geq 2$). Here, we ensure that $A$ is large enough so that $A \log\,(2)>1$. This estimate corresponds to $\varepsilon_0 (t)$. Indeed, since  $A_p$ is a constant independent of $N$, we may use the fact that $2^{C_p \log (N)}\leq e^{C_p \log (N)}=N^{C_p}$. Thus, we may choose a sufficiently large constant $A$ so that $N^{3/2 + A}\leq C(A\log (N))^p$, which is possible because $A\log\, (N)\geq A \log\,(2)>1$. 

 We have thus shown that, $$f'(t) \approx \frac{i}{\sqrt{N}}\sum_{s=1}^{r-1}\sum_{j=1}^{N}\mathbb{E}\left[g(X_j)J_1^{(j)} \cdots J_{s}^{(j)}\right]\,\mathbb{E} \left[e^{itT_{j, s+1}}\right].$$

We then apply the mean value theorem, and again using the assumption that $\sup_{N\in \mathbb{N}}\rho_3 \leq \rho<+\infty$, and the triangle inequality, 
we can write $|J_s^{(j)}|\leq |t\Delta_{j,s}|$. This yields,  \begin{align}\label{Jsj inequality}
    \left\|J_s^{(j)}\right\|_{L^3}\leq |t| \left\|\Delta_{j,s}\right\|_{L^3}\leq C\frac{|t|m}{\sqrt{N}}.
\end{align}

Next, we group the even and odd $J_{i}^{(j)}$ into separate groups and apply the H\"{o}lder inequality with coefficients 2 and 2. This gives, \begin{align}
    \left(\mathbb{E}\left[|J_1^{(j)} \cdots J_s^{(j)}|^{3/2}\right]\right)^{2/3} \leq \mathbb{E}\left[\left|\prod_{i \text{ even}}^s J_{i}^{(j)}  \right|^3\right]^{1/3}     \mathbb{E}\left[\left|\prod_{i \text{ odd}}^s J_{i}^{(j)}  \right|^3\right]^{1/3}.\label{Js Inequality 2}
\end{align}

We now use the inequality $|J_{i}^{(j)}|\leq 2$, and replace the $J_{i}^{(j)}$ by their independent copies using Lemma \ref{Decoupling} (the decoupling lemma) a total of $s-2$ times to decouple both expectations in Inequality \ref{Js Inequality 2}. Observe that in each expectation, the $X_j$ terms that $J_{i}^{(j)}$ is dependent upon are a distance of at least $2m>m$ indices apart ($J_{i}^{(j)}\in$ $\sigma(\Omega_{i}^{(j)})$ for each $i$ and $j$). This gives,
\begin{align}\label{4.11 Inequality}
    \left(\mathbb{E}\left[|J_1^{(j)} \cdots J_s^{(j)}|^{3/2}\right]\right)^{2/3} \leq \left\{\prod_{\ell = 1}^{s}\left(\mathbb{E}[|J_{\ell}^{(j)}|^3]\right)^{1/3}\right\} + As2^s\phi^{1/3}(m).
\end{align} Thus, by our choices of $m = A_{p} N^{\epsilon_{p}}$ and $r= C_p\log (N)$, along with the fact that $2<e$, and sufficiently large choices of $A$ and $p$, we have, \begin{align*}
    2^s\phi^{1/3}(m)
    &\leq e^{2r}\phi^{1/3}(m) \\
    &\leq N^{2C_p}\phi^{1/3}(m) \\
    &\leq AN^{-3}.
\end{align*}  Here, we used the same calculation for $\phi(m)$ as before, valid for sufficiently large $K$ and $p$. This may require a larger choices of $K$ and $p$, and we may choose $A_p = A_p(C_p)$ sufficiently large. 

Next, we apply Inequality \ref{Jsj inequality} to Inequality \ref{4.11 Inequality}, and then use the bound $|t|\leq a\sqrt{N}/m^2$. This yields,
\begin{align}
    \left(\mathbb{E}\left[|J_1^{(j)} \cdots J_s^{(j)}|^{3/2}\right]\right)^{2/3} &\leq A\left(|t|m N^{-1/2}\right)^s + AN^{-3}\\
    &\leq A\left(a/m\right)^s+ AN^{-3}.\label{Ji inequality}
\end{align}

Indeed, for Inequality \ref{4.11 Inequality}, $s\leq r = A_p \log (N)$, so \begin{align*}
    A3^s(\phi(m))^{1/3}&\leq A3^r(\phi(m))^{1/3} \leq A3^{C_p \log (N)}(\phi(m))^{1/3} \\
    &= A e^{C_p\log (N) \log (3)}  (\phi(m))^{1/3} = A N^{C_p\log(3) }  (\phi(m))^{1/3}.
\end{align*}  Combining these inequalities, we have, \begin{align}
    N^{-1/2}\left|\sum_{s=2}^{r-1}\sum_{j=1}^N\,\mathbb E\left[g_j(X_j)J_1^{(j)}\cdots J_s^{(j)}\right]\mathbb E\left[e^{itT'_{j,s+1}}\right] \right| &\leq N^{-1/2} \left|\sum_{s=2}^{r-1}\sum_{j=1}^N \|g_j(X_j)\|_{L^3} \,  \|J_1^{(j)}\cdots J_s^{(j)}\|_{L^{3/2}}\right|\\
    &\leq N^{-1/2} \sum_{s=2}^{r-1}\sum_{j=1}^N (\rho^{1/3})\left(A\,(a/m)^s+ AN^{-3}\right)\\
    &\leq N^{-1/2} \sum_{s=2}^{r-1}\sum_{j=1}^N \left(A\left(a/m\right)^s+ AN^{-3}\right).\label{Jl decomposition}
\end{align}
Furthermore, by Inequality \ref{I_4 Inequality},
\begin{align}
    f'(t)\approx \sum_{s=1}^{r-1}\sum_{j=1}^N I_2(j,s) \hspace{.2in}\text{because }\hspace{.2in} \sum_{j=1}^N I_1(j,r)\approx 0.
\end{align}

To prove the above approximation, we use the fact that $I_2(j,0)=0$ for each $j$ (because $\mathbb{E}[X_j] = 0$ for each $j$), and use Inequality \ref{4.11 Inequality} to bound $I_1(j,r)$. The error in this approximation is bounded by $AN^{-1/2}$.
Define $T'_{j,s} := T_{j,s}+\delta_j = S_{j,s} + R$, and define $\mu_{j,s} := S/\sqrt{N} - S_{j,s+1}$. Then $T'_{j,s+1} = T/\sqrt{N}-\mu_{j,s}$. We may thus write,
\begin{align}\label{I_3(j,s)}
    f'(t) \approx \sum_{s=1}^{r-1}\sum_{j=1}^N \frac{i}{\sqrt{N}} \mathbb{E}\left[g_j(X_j)J_1^{(j)}\cdots J_s^{(j)}\right]\mathbb{E}\left[e^{itT'_{j,s+1}}\right].
\end{align} The error in this approximation is  bounded by $A|t|m^2 N^{-1}\gamma$. We obtain this bound by expanding in powers of $\delta_j$ (i.e. the mean value theorem), and applying the inequality $\mathbb{E}[|\delta_j|^{3/2}]\leq Am^3 \gamma^{3/2}$ along with the H\"{o}lder inequality with coefficients 3 and 3/2, and Inequality \ref{Ji inequality}. 

Next, we show that $N^{-1/2} \sum_{s=2}^{r-1}\sum_{j=1}^N\mathbb E[g_j(X_j)J_1^{(j)}\cdots J_s^{(j)}]\,\mathbb E[e^{itT'_{j,s+1}}]\approx 0$. To this end, we now decompose the sum as,
\begin{align}
    N^{-1/2}\left|\sum_{s=2}^{r-1}\sum_{j=1}^N\mathbb E\left[g_j(X_j)J_1^{(j)}\cdots J_s^{(j)}\right] \left(\mathbb{E}\left[e^{itT'_{j,s+1}}\right] - \mathbb{E}\left[e^{itT/\sqrt{N}}\right]\right)\right| \\= N^{-1/2}\left|\sum_{s=2}^{r-1}\sum_{j=1}^N\mathbb{E}\left[g_j(X_j)J_1^{(j)}\cdots J_s^{(j)}\right]\left(D_1^{(s,j)} f(t) + D_2^{(s,j)}\right)\right|, \label{D1 D2 decomposition}
\end{align} where $D_1^{(j,s)} := \mathbb{E}[e^{-it\mu_{j,s}}-  1]$ and $D_2^{(j,s)} := \mathbb{E}\left[(e^{-it\mu_{j,s}}-\mathbb{E}[e^{-it\mu_{j,s}}])\,e^{itT/\sqrt{N}}\right]$. In this decomposition, we used the fact that $\mathbb{E}[e^{itT_{j,s+1}}] - \mathbb{E}[e^{itT/\sqrt{N}}] = D_1^{(s,j)}\,\mathbb{E}[e^{itT/\sqrt{N}}] + D_2^{(s,j)}$. By the triangle inequality, we may consider the $D_1^{(s,j)}$ and $D_2^{(s,j)}$ terms in Equation \ref{D1 D2 decomposition} separately, and show that they are both negligible. 

We first demonstrate that the sum of the  $D_1^{(j,s)}$ terms in Equation \ref{D1 D2 decomposition} is negligible. The $D_1^{(j,s)}$ terms correspond to $\varepsilon(t)$. By the mean value theorem, we deduce that,
\begin{align}
    \mathbb{E}\left[e^{-it\mu_{j,s}}\right]-1 \leq A|t| \,\mathbb{E}\left[|\mu_{j,s}|\right]\leq A|t|msN^{-1/2}.\label{mean value theorem bound}
\end{align} Recall the bounds from Inequality \ref{Ji inequality} that $\mathbb{E}[|g_j(X_j) J_{1}^{(j)} \cdots J_{s}^{(j)}|] \leq A(a/m)^s + AN^{-3}$ for each $s$. Firstly, we notice that the contribution of the $AN^{-3}$ terms in Inequality \ref{Jl decomposition} is negligible because there are $(r-2)N$ summands, and by Inequality \ref{mean value theorem bound}. We may also observe that $(a/m)\leq 1/2$ by our choices of $a$ and $m$ (for $N$ large enough). In particular, if $0<x<1/2$, then $\sum_{n=2}^{\infty}x^n \leq x^2/(1-x)\leq 2x^2$: this inequality follows from the sum of a geometric series. Combining these results, we see that the contribution of the $A(a/m)^s$ terms to the sum is also negligible. Thus, we have shown that the sum of the $D_1^{(s,j)}$ terms in Inequality \ref{Ji inequality} is negligible.

Next we consider the $D_2^{(j,s)}$ terms. The sum of the $D_2^{(j,s)}$ terms corresponds to $\varepsilon_0(t)$. Define $\xi_{j,s} := e^{it\mu_{j,s}}$ and $\gamma_{j,s} := \mathbb{E}[g_j(X_j)J_1^{(j)}\cdots J_s^{(j)}]\,\xi_{j,s}$ for each $j$ and $s$. Thus,
\begin{align*}
  \frac{1}{\sqrt{N}}\sum_{s=2}^{r-1}\sum_{j=1}^N\mathbb{E}\left[g_j(X_j)J_1^{(j)}\cdots J_{s}^{(j)}\right]D_2^{(s,j)}  &=\frac{1}{\sqrt{N}}\sum_{s=2}^{r-1}\sum_{j=1}^N\mathbb{E}\left[g_j(X_j)J_1^{(j)}\cdots J_{s}^{(j)}\right]\mathbb{E}\left[(\xi_{j,s} - \mathbb{E}\left[\xi_{j,s}\right])\,e^{itT/\sqrt{N}}\right]\\ &= \frac{1}{\sqrt{N}}\sum_{s=2}^{r-1}\mathbb{E}\left[\sum_{j=1}^N\left(\gamma_{j,s} - \mathbb{E}\left[\gamma_{j,s}\right]\right)e^{itT/\sqrt{N}}\right].\\
\end{align*} Let $\bar{\gamma}_{j,s}$ denote a centered copy of $\gamma_{j,s}$ (so  $\mathbb{E}[\bar{\gamma}_{j,s}] = 0$). By the H\"{o}lder inequality with coefficients 2 and 2, the triangle inequality, and the fact that $|e^{itT/\sqrt{N}}|=1$, we now calculate,
\begin{align}\left|\frac{1}{\sqrt{N}}\sum_{s=2}^{r-1}\sum_{j=1}^N \mathbb{E}\left[g_j(X_j)J_1^{(j)}\cdots J_{s}^{(j)}\right]D_2^{(j,s)}\right|^2&\leq\frac{1}{N}\sum_{s=2}^{r-1}\mathbb{E}\left[\sum_{j=1}^N\left(\gamma_{j,s} - \mathbb{E}\left[\gamma_{j,s}\right]\right)^2\right]\label{close terms}\\
&=\frac{1}{N}\sum_{s=2}^{r-1}\left(\sum_{1\leq j_1,j_2\leq N} \text{Cov}\,(\gamma_{j_1,s}, \gamma_{j_2,s})\right).\label{Covariance terms}
\end{align} 

Next, we decompose Sum \ref{Covariance terms} by considering the terms ($\text{Cov}\,(\gamma_{j_1,s}, \gamma_{j_2,s})$ for fixed $s$, $j_1$, and $j_2$) of this sum that are close to and far from each other separately. We consider first the terms in the sum such that  $|j_1 - j_2|<4ms$. From Inequality \ref{close terms}, and using the fact that $r\leq m$ for $N$ sufficiently large, we have, \begin{align}
    \frac{1}{N}\sum_{s=2}^{r-1}\left(\sum_{1\leq j_1,j_2\leq N, \, |j_1 - j_2|<4ms} \text{Cov}\,(\gamma_{j_1,s}, \gamma_{j_2,s})\right) &\leq A\frac{1}{N}\sum_{s=2}^{r-1}\sum_{j=1}^N \left((a/m)^s + N^{-3}\right)\,(4ms)\,(s^2 m^{-2})\\
    &\approx A\frac{1}{N}\sum_{s=2}^{r-1}\sum_{j=1}^N (a/m)^s m^{-1} s^3\\
    &= A\frac{1}{N}\sum_{s=2}^{r-1}s^3\sum_{j=1}^N (a/m)^{s+1}\\
    &\leq A\frac{1}{N}r^4 (a/m)^2\\
    &\leq A\frac{1}{N}m^4 (a/m)^2\\
    &\approx A\frac{m^2}{N}.
\end{align} For the first inequality, we grouped the terms by the distance $|j_1 - j_2|$ for each fixed $j_1$. The first approximation uses the fact that the $N^{-3}$ terms are negligible (by the same argument as for the $D_1^{(s,j)}$ terms) after taking the square root. The final approximation follows from our choices of $m$ and $r = C_p \log(N)$. Taking square roots, we conclude that the sum of these terms of Inequality \ref{close terms} such that  $|j_1 - j_2|<4ms$ is bounded by $AmN^{-1/2}$, and is thus negligible.

Next we consider the terms ($\text{Cov}\,(\gamma_{j_1,s}, \gamma_{j_2,s})$ for fixed $s$, $j_1$, and $j_2$) of Inequality \ref{far terms} such that  $|j_1 - j_2|\geq 4ms$. From Inequality \ref{far terms}, we have,
\begin{align}
\frac{1}{N}\sum_{s=2}^{r-1}\left(\sum_{1\leq j_1,j_2\leq N, \, |j_1 - j_2| \geq 4ms} \text{Cov}\left(\gamma_{j_1,s}, \gamma_{j_2,s}\right)\right)&\leq  \frac{1}{N} \sum_{s=2}^{r-1} \left(\sum_{j_1 = 1}^N \sum_{j_2=1}^N  \phi(4ms)\,2\left(A\left(a/m\right)^s+ AN^{-3}\right) \right)\label{far terms}\\
&\leq  \frac{A}{N} \sum_{s=2}^{r-1} \left(\sum_{j_1 = 1}^N \sum_{j_2=1}^N  \phi(4ms)\left(\left(a/m\right)^s+ N^{-3}\right) \right).
\end{align}
Inequality \ref{far terms} follows from the decoupling inequality, using the facts that $\mathbb{E}[\bar{\gamma}_{j_2,s}] = 0$, and that the indices are separated by at least $4ms$ indices (for each fixed $s$). The maximum follows from Inequality \ref{Ji inequality} and $|\xi_{j,s} - \mathbb{E}[\xi_{j,s}]|\leq 2$.

%%%%%%%%%%%%%%%%%%%
We have thus shown that $$\sum_{s=2}^{r-1}\sum_{j=1}^N\mathbb E\left[g(X_j)J_1^{(j)}\cdots J_s^{(j)}\right]\,\mathbb E\left[e^{itT'_{j,s+1}}\right]\approx 0.$$

It follows that, \begin{align}
    f'(t)\approx \sum_{j=1}^N\frac{1}{\sqrt{N}}\mathbb{E}\left[g_j(X_j)J_1^{(j)}\right]\mathbb{E}\left[e^{it(T/\sqrt{N}-\mu_{j,1})}\right].
\end{align}

%\textbf{NEED SOMETHING HERE- use (13) to show that $\sum_{s=2}^{r-1}\sum_{j=1}^N\mathbb E[g(X_j)J_1^{(j)}\cdots J_s^{(j)}]\mathbb E[e^{itT'_{j,s+1}}]\approx 0$.}

Now, we may recall by Inequality \ref{Ji inequality} (with $s=1$) that $\|g_j(X_j)J_1^{(j)}\|_{L^1}\leq Cm N^{-1/2}$. We also use the facts that $\mathbb{E}[|\mu_{j,1}|] \leq C\sqrt{m}N^{-1/2}|t|$ and $|e^{it(T/\sqrt{N}-\mu_{j,1})} - e^{itT/\sqrt{N}}|\leq |t\mu_{j,1}|$. Thus, by the linearity of variance of $S_N$, \begin{align*}
    \left|\sum_{j=1}^N\frac{1}{\sqrt{N}}\mathbb{E}\left[g_j(X_j)J_1^{(j)}\right]\left(\mathbb{E}[e^{it(T/\sqrt{N}-\mu_{j,1})}]-\mathbb{E}[e^{itT/\sqrt{N}}] \right)\right|&\leq \sum_{j=1}^N\frac{C\sqrt{m}}{\sqrt{N}}\frac{1}{N}|t|\,\mathbb{E}\left[|\mu_{j,1}|\right]\\
    &\leq \frac{Cm}{\sqrt{N}}.
\end{align*} Thus, recalling that $f(t) = \mathbb{E}[e^{itT/\sqrt{N}}]$, we have shown that,
\begin{align}
f'(t)\approx \frac{i}{\sqrt{N}}\left(\sum_{j=1}^N \mathbb{E}\left[g_j(X_j)\,J_1^{(j)}\right]\right)f(t).\label{O mN terms}   
\end{align} 

Furthermore, an application of the mean value theorem yields $J_1^{(j)} = it\Delta_{j,1} + O(mN^{-1})$. Summing the 
$O(mN^{-1})$ terms in Sum \ref{O mN terms}, we see that these terms are negligible. Thus, we have shown that, \begin{align}
f'(t)\approx \frac{i}{\sqrt{N}}\left(\sum_{j=1}^N \mathbb{E}\left[g_j(X_j)\,(it\Delta_{j,1})\right]\right)f(t) = \frac{-t}{\sqrt{N}}\left(\sum_{j=1}^N \mathbb{E}\left[g_j(X_j)\,\Delta_{j,1}\right]\right)f(t).
\end{align} 
Observe that for all $|j-k|\geq m = A_{p} N^{\epsilon_{p}}$, we have, \begin{align}
    \left|\mathbb{E}\left[g_j(X_j)\,g_k(X_k)\right]\right|&\leq \sqrt{N}\sqrt{N} \phi\left(|k-j|\right)\label{Using phi mixing third time}\\
    &\leq KN m^{-p} \\
    &\leq KN^{1-p\epsilon_p}\label{product bound}.
\end{align}

Furthermore, using the decomposition $ N^{-1/2} \sum_{k=1}^N g_k(X_k) =\Delta_{j,1} +S_{j,1}$ (valid for each $j$ and for $p$ large enough), we have,
\begin{align}
     \frac{1}{\sqrt{N}}\sum_{j=1}^N \mathbb{E}\left[g_j(X_j)\sum_{k=1}^N \frac{g_k(X_k)}{\sqrt{N}}\right] = O(N^{-1}) + \frac{1}{\sqrt{N}}\sum_{j=1}^N \mathbb{E}\left[g_j(X_j)\,\Delta_{j,1}\right].
\end{align}
This implies that,
\begin{align}
     f'(t) \approx \frac{-t}{\sqrt{N}}\left(\sum_{j=1}^N \mathbb{E}\left[g_j(X_j)\sum_{k=1}^N \frac{g(X_k)}{\sqrt{N}}\right]\right)f(t) \approx \frac{-t}{\sqrt{N}}\left(\sum_{j=1}^N \mathbb{E}\left[g_j(X_j)\,\Delta_{j,1}\right]\right) f(t).\label{Sjs decomposition}
\end{align}  The second approximation in \ref{Sjs decomposition} corresponds to $\varepsilon(t)$. We now consider the decomposition of the right-hand side of Approximation \ref{Sjs decomposition} into the terms close to and far from each index $k$ in the following way: \begin{align} \sum_{k=1}^N\mathbb{E}\left[g_k(X_k)\sum_{j:|j-k|\leq m} \frac{g_j(X_j)}{\sqrt{N}}\right] = \sum_{k=1}^N\mathbb{E}\left[g_k(X_k)\sum_{j=1}^N \frac{g_j(X_j)}{\sqrt{N}}\right] - \sum_{k=1}^N \mathbb{E}\left[g_k(X_k)\sum_{j:|j-k|>m}\frac{g_j(X_j)}{\sqrt{N}}\right].\label{near far decomposition}
\end{align}  By the linearity of variance of $S_N$, and using Inequality 
\ref{product bound}, the second term of the right-hand side of Equation \ref{near far decomposition} is of the order of $N^2\phi(m)\leq KN^2 N^{-p\epsilon_p}$. Thus, we have, \begin{align}
f'(t)\approx \frac{-t}{\sqrt{N}}\left(\sum_{j=1}^N \mathbb{E}\left[g_j(X_j)\,\Delta_{j,1}\right]\right) f(t) \approx \frac{-t}{N}\mathbb{E}\left[\left(\sum_{k=1}^Ng_k(X_k)\right)^2\right]f(t).\label{final approximation}
\end{align}
Finally, we assumed that $ \mathbb{E}\left[\left(\sum_{k=1}^Ng_k(X_k)\right)^2\right]=\mathbb{E}\left[S^2\right]  = N$. Combining this assumption with Approximation \ref{final approximation} then implies that $f'(t)\approx -tf(t)$, completing the proof. $\QEDB$\\

\section{Notes and Extensions of  Theorem \ref{Berry-Esseen Bounds for Statistics of Weakly Dependent Samples Proof}}
In the above theorem, the same asymptotic bounds as Bentkus et al. \cite{bentkus1997berry} were obtained while dropping the assumption that the family of random variables is stationary and replacing the exponential mixing rates with polynomial mixing rates, but with the additional assumption that the variance grows linearly.  

Notably, we did \textit{not} prove that optimal bounds were obtained (and likely should not be expected in general), either in the general setting of Theorem \ref{Berry-Esseen Bounds for Statistics of Weakly Dependent Samples Proof}, or in its applications.  However, the bounds we obtained are strong enough to be useful in many applications. It is not clear whether the $N^{\varepsilon_p}$ factors in the asymptotic bounds of Theorem \ref{Berry-Esseen Bounds for Statistics of Weakly Dependent Samples Proof} can be weakened (or removed) through a modification of the proof.  Since Theorem \ref{Berry-Esseen Bounds for Statistics of Weakly Dependent Samples Proof} does not ensure optimal rates in its applications, it is necessary to use other methods if optimal rates (or tighter) are required, without reference to Theorem \ref{Berry-Esseen Bounds for Statistics of Weakly Dependent Samples Proof}. In either case, future research could produce tighter bounds.

We argue that in certain applications of Theorem \ref{Berry-Esseen Bounds for Statistics of Weakly Dependent Samples Proof}, the assumption of linear variance growth can be dropped. To demonstrate this fact, we consider several applications of Theorem \ref{Berry-Esseen Bounds for Statistics of Weakly Dependent Samples Proof} to classes of random variables with non-linear variance growth. To extend this result to cases where the variance grows non-linearly, we consider a block decomposition of $S_N$. This is a commonly used technique in non-linear settings. For example, see  \cite{hafouta2021convergence}, \cite{hafouta2023almost}, and \cite{hafouta2021functional}, and Theorem \ref{Non Uniform Third Moment Application}.

\subsection{Extension of Theorem \ref{Berry-Esseen Bounds for Statistics of Weakly Dependent Samples Proof} to Polynomial Mixing Rates for Stationary Sequences} \label{Stationary extension section}
Theorem \ref{Berry-Esseen Bounds for Statistics of Weakly Dependent Samples Proof} immediately holds for faster mixing rates, such as exponential ($\phi(m)\leq C_0e^{-Cm}$) and stretched-exponential rates ($\phi(m)\leq C_0e^{-Cm^{\gamma}}$ for any $\gamma \in (0,1]$). This yields a strict generalization of Theorem 1.1 of Bentkus et al. \cite{bentkus1997berry} to polynomially fast mixing rates (rather than exponential rates) for \textit{stationary} sequences. This strict generalization comes from considering Theorem \ref{Berry-Esseen Bounds for Statistics of Weakly Dependent Samples Proof} for stationary sequences (linearity of variance follows from stationarity).

\section{Applications of Theorem \ref{Berry-Esseen Bounds for Statistics of Weakly Dependent Samples Proof}}\label{Applications of Main Theorem section} In this section, we apply Theorem \ref{Berry-Esseen Bounds for Statistics of Weakly Dependent Samples Proof} to obtain Berry-Esseen bounds for various classes of stochastic processes. In all of these applications, we choose $R_{j,k}$ for $j<k$ in a natural way: we let $R_{j,k}$ denote all of the remainder terms arising from the $X_{\ell}$ for all $\ell \in \{1,\cdots ,N\}\setminus [j,k]$. Furthermore, to apply Theorem \ref{Berry-Esseen Bounds for Statistics of Weakly Dependent Samples Proof} for each application, we  must estimate $\mathbb{E}[|R|]$ and $\gamma$, which can be done by estimating moments. The usefulness of Theorem \ref{Berry-Esseen Bounds for Statistics of Weakly Dependent Samples Proof} is that its assumptions are applicable to and easily verifiable in a wide class of processes. 

The applications of Theorem \ref{Berry-Esseen Bounds for Statistics of Weakly Dependent Samples Proof} to U-statistics and studentized sample means are generalizations of the applications in Bentkus et al. \cite{bentkus1997berry} to non-stationary, $\phi$-mixing sequences with linear variance and polynomial rates; their proofs are identical to those in \cite{bentkus1997berry}, as none of the proofs of applications in \cite{bentkus1997berry} rely upon the stationarity of the sequence $\{X_j\}_{j\in \mathbb{Z}^{+}}$ nor significantly upon the mixing rates. In particular,  all of the applications in Bentkus et al. \cite{bentkus1997berry} follow for non-stationary triangular arrays in which the variance grows linearly. However, proof is required to show that the assumption of linear variance can be dropped in any of these applications (\textit{if} variance linearization is possible). See Bentkus et al. \cite{bentkus1997berry} for proofs of Berry-Esseen theorems for a class of U-statistics, linear combinations of order statistics, functions of empirical distribution functions, studentized sample means, and functions of sample means under linear variance assumptions.

In the following subsection, we show that we can obtain Berry-Esseen bounds in general settings using Theorem \ref{Berry-Esseen Bounds for Statistics of Weakly Dependent Samples Proof}. The proof requires a block decomposition due to the non-linear variance growth and the non-uniform third moment.
%%%%%%%%%%%%%%%%%%%%%%%%%%%%%%%%%%%%%%%%%%%%%%%%%%%%%%%%%%%%%%%%%%%%%%%%%%%%%%%%%%%%%%%%%%%%%%%%%%%%%%%%%%%%%%%%%%%%%%%%%%%%%%%%%%%%%%%%%%%Start of Third moment Application
\subsection{A General Berry–Esseen Theorem for Non-Stationary Sequences With Non-Uniform Third Moments}\label{Non-uniform third moment subsection} 

%%%%%%%%%%%%%%%%%%%%%%%%%%%%%%%%%%%%%%%%%%%%%%%%%%%%%%%%%%%%%%%%%%%%%%%%%%%%%%%%%%%%%%%%%%%%%%%%%%%%%%%%%%%%End of Third Moment Application
In this application, we prove that we can obtain the same convergence rates as Theorem 2 for non-stationary, linear sequences $\{g_{j,N}(X_j)\}_{}$ \textit{without} assuming a uniform third moment bound.   We state the theorem for $R=0$, but the $R\neq 0$ case can also be handled by bounding $\mathbb{E}[R]$ and $\gamma$, as usual. 

In this proof, we fix a slowly growing $\tau = \tau(N)\to \infty$ (e.g. $\tau = \sigma_N^{\eta}$ for $0<\eta<2$) and define the exceptional set $$\mathcal{E}_\tau = \mathcal{E}_\tau(N):= \left\{j\leq N: \mathbb{E}\left[|g_j(X_j)|^3\right]>\tau\right\}.$$ The exact choice of $\tau$ does not affect the arguments in the proof of the theorem, but is important if we need to extend to $R\neq 0$ and when Assumption \ref{Inter block covariance control assumption} is not satisfied (see Corollary \ref{Non uniform third moment corollary}).

\textbf{Block Construction}: We employ the following two-step block construction.

\begin{enumerate}
    \item Remove $\mathcal{E}_\tau$ from $\{1,2, \cdots, N\}$ and form blocks $\tilde{Z}_j$ on the reduced  sequence by the following rule: form blocks by adding indices until the block variance is $\geq \tau^2$. The final block may be incomplete, but treat it as the final block.

    \item Re-insert the exceptional indices back into the nearby reduced blocks to create new blocks $Z_j$. 
\end{enumerate}

The block decomposition provides a partition $P_1, P_2, \cdots, P_{k_N}$ of $\{1,2, \cdots, N\}$. Denote $Z_j := \sum_{i\in P_j}g_i(X_i)$ and $Y_j:= Z_j/\tau$. In the proof, we demonstrate that this block decomposition both linearizes the variance and ensures bounded third moments for $\{Y_j\}$.

In Lemma \ref{Non-uniform third moment lemma}, we demonstrate that under Assumptions 1-6 of the theorem in this section, there exists $C_3<\infty$ (dependent on the choice of $\tau$) such that $\sup_{j,N}\mathbb{E}\,[|Y_j|^3]\leq C_3$. 

We make the following assumptions about the sequence $\{g_j(X_j)\}_{j=1}^{\infty}$:

\begin{enumerate}
    \item (Mixing) There exists $c_{\text{mix}} < \infty$ such that for all $h\in \mathbb{Z}^+$, $\phi(h)\leq c_{\text{mix}}\,h^{-p}$ for a large enough $p$.
    \item (Variance diverges) $\text{Var}(\sum_{j=1}^N g_j(X_j))\to \infty$.
    \item (Sub-linear variance growth) There exists $v_{\text{max}}>0$ such that for all $N$ and every subset $\mathcal{I} \subseteq \{1,2, \cdots, N\}$, $\text{Var}\left(\sum_{j\in \mathcal{I}} g_j(X_j)\right) \le v_{\text{max}}\, \text{Card}(\mathcal{I}).$ \label{Sub-linear variance growth assumption}
    \item (Average third moment) There exists $M<\infty$ such that for all $N$, $$\frac{1}{N}\sum_{j=1}^N\mathbb{E}\left[|g_j(X_j)|^3\right]\leq M<\infty.$$
    \item (Sparse exceptional mass) Assume that $$\sum_{i\in \mathcal{E}_\tau} \mathbb{E}\left[|g_i(X_i)|^3\right]\leq C_{\text{exc}}\,\tau^{3-\epsilon}$$ for constants $C_{\text{exc}},\epsilon>0$ independent of $N$ and $\tau$.
    \item $\max_{j}\|\max_{i\in\tilde P_j}|g_i(X_i)|\|_{L^3}\leq C\tau$ where $\tilde{P}_j := P_j \setminus \mathcal{E}_\tau$ and $C$ is a constant independent of $N$.
    
    \item\label{Inter block covariance control assumption} (Inter-block covariance control) Choose $C_3<\infty$ such that for all $N$, $\sup_{1\leq j \leq k_N} \mathbb{E}[|Y_j|^3]\leq C_3$ (this is possible from Assumptions 1-6, and $C_3$ can depend on the choice of $\tau$). Assume that the mixing constant coefficient $c_{\text{mix}}$ is small enough such that  $24(1/2)^{1/3} C_3 \,c_{\text{mix}}\leq 1$.
\end{enumerate}

Note that we replaced the uniform third moment assumption of Theorem 1 with an \textit{average} third moment assumption and inter-block covariance control. Larger choices of $\tau$ allow for a larger ``exceptional" third moment mass.

Also, recall a version of Rosenthal's Inequality for sufficiently fast mixing sequences. See \cite{PelBook}, Theorem 6.17 for a proof.

\begin{lemma}[Rosenthal's Inequality for $\phi$-mixing sequences]
 If the $\phi$-mixing coefficient of a real valued sequence $(A_j)_{j\geq 1}$ satisfy $\phi(n_0)<1/2$ for some $n_0$, then for all $p>2$ and $m<N$, 
$$\|A_m+A_{m+1}+\cdots+A_N\|_{L^p}\leq C_p\left(\left\|\max_{j=m}^N|A_j|\right\|_{L^p}+\max_{m\leq j\leq N}\|A_m+\cdots+A_{j}\|_{L^2}\right)$$
for some constant $C_p$ which does not depend on $N$ and $m$.   
\end{lemma}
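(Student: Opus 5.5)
We prove Rosenthal's inequality for $\phi$-mixing sequences by a good-$\lambda$/maximal-inequality argument, in the spirit of the classical proofs for independent summands. The paper only cites this result (\cite{PelBook}, Theorem 6.17), so the sketch below is the route we would follow.

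Write $S_{m,k} := A_m + \cdots + A_k$ and $M^* := \max_{m\le k\le N}|S_{m,k}|$. The aim is to bound $\|M^*\|_{L^p}$, which dominates the left-hand side.

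The first step is an Ottaviani/Lévy-type maximal inequality adapted to $\phi$-mixing. Fix $n_0$ with $\phi(n_0)<1/2$ and a threshold $\lambda>0$. Let $\tau$ be the first index $k$ with $|S_{m,k}|>3\lambda$. On the event $\{\tau=k\}\in\sigma[1,k]$, compare $S_{m,N}$ with $S_{m,k}$ through the tail increment $S_{k+n_0,N}$. The gap block $S_{k+1,k+n_0-1}$ contains at most $n_0$ terms, so it is controlled by $n_0\max_j|A_j|$. For the tail, the definition of $\phi$ gives
\begin{align*}
\bbP\big(|S_{k+n_0,N}|>\lambda \,\big|\, \tau=k\big)\le \bbP\big(|S_{k+n_0,N}|>\lambda\big)+\phi(n_0).
\end{align*}
Summing over $k$ yields a good-$\lambda$ inequality of the form
\begin{align*}
\bbP(M^*>4\lambda)\le \bbP\big(|S_{m,N}|>\lambda\big) + \bbP\big(n_0\max_j|A_j|>\lambda\big) + \big(\phi(n_0)+\theta(\lambda)\big)\,\bbP(M^*>\lambda),
\end{align*}
where $\theta(\lambda):=\max_k \bbP(|S_{k,N}|>\lambda)$. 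The condition $\phi(n_0)<1/2$ keeps the coefficient of $\bbP(M^*>\lambda)$ strictly below $1$ once $\lambda$ exceeds a large multiple of $\max_k\|S_{m,k}\|_{L^2}$, using Chebyshev on the $L^2$ norms.

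The second step is a Hoffmann-Jørgensen-type iteration. We apply the good-$\lambda$ inequality to the square of the tail probability, i.e. $\bbP(M^*>4\lambda)\lesssim \bbP(M^*>\lambda)^2 + \bbP(\max_j|A_j|>\lambda/n_0)$. Multiplying by $p\lambda^{p-1}$ and integrating in $\lambda$ from a cutoff $t_0 = c\max_k\|S_{m,k}\|_{L^2}$ gives
\begin{align*}
\|M^*\|_p \le C\big(t_0+\|\max_j|A_j|\|_p\big)+\tfrac12\|M^*\|_p.
\end{align*}
The last term is absorbed into the left-hand side; since $N$ is finite, $\|M^*\|_p<\infty$ provided each $A_j\in L^p$ (otherwise the right-hand side is infinite and there is nothing to prove). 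The constant depends only on $p$, $n_0$ and $\phi(n_0)$, not on $m$ or $N$.

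The main obstacle is the squaring step. For independent summands, $\bbP(M^*>\lambda)^2$ arises from independence of the pieces of the sum before and after the stopping time. Here it has to be produced by the $\phi$-mixing conditioning across a gap of length $n_0$. The resulting error term $\phi(n_0)\,\bbP(M^*>\lambda)$ is not small, only strictly less than $1/2$ times that probability. We would therefore fix the numerical constants (for example $3\lambda$ versus $4\lambda$, and the choice of cutoff) so that $\phi(n_0)<1/2$ still gives a contraction factor below $1$ after integration. If this fails, the fallback is to split the indices into blocks of length $n_0$, treat even and odd blocks separately, and use that consecutive same-parity blocks are separated by gaps of at least $n_0$.
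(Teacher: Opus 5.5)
The paper gives no proof of this lemma; it only cites \cite{PelBook}, Theorem 6.17. Your argument therefore has to stand on its own, and as written it does not close.

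\textbf{The Ottaviani step.} This is fine in substance, but keep $\bbP(\tau\le N)=\bbP(M^*>3\lambda)$ on the right-hand side rather than $\bbP(M^*>\lambda)$, so that you can solve for it. This gives $\bbP(M^*>3\lambda)\,(1-\phi(n_0)-\theta(\lambda))\le \bbP(|S_{m,N}|>\lambda)+\bbP(n_0\max_j|A_j|>\lambda)$. That is what makes $\bbP(M^*>t_0)$ small; Chebyshev on the $L^2$ norms of the partial sums alone does not control the maximum.

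\textbf{The genuine gap is the squaring step.} With a gap of length $n_0$ and the $\phi$-mixing bound, the Hoffmann-J{\o}rgensen argument gives
\[
\bbP(M^*>3\lambda+s)\le \bbP(M^*>\lambda)\big(\bbP(M^*>\lambda)+\phi(n_0)\big)+\bbP\big(n_0\max_j|A_j|>s\big).
\]
The linear term $\phi(n_0)\bbP(M^*>\lambda)$, which your display drops, cannot be removed. Multiply by $p\lambda^{p-1}$ and integrate over $\lambda\ge t_0$. On the right this term contributes $(\phi(n_0)+\bbP(M^*>t_0))\|M^*\|_p^p$. On the left you only have $c^{-p}\|M^*\|_p^p$, where $c>3$ is the ratio of the two thresholds. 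Absorption therefore needs $\phi(n_0)<c^{-p}<3^{-p}$, and the hypothesis $\phi(n_0)<1/2$ does not give this for any $p>2$.

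Tuning the constants ($3\lambda$ versus $4\lambda$) only enlarges $c$. The even/odd blocking fallback does not help either: the blocked sequence still has lag-one coefficient at most $\phi(n_0)$, which is known only to be below $1/2$. Under the paper's polynomial decay you could take a longer gap $n_1$ with $\phi(n_1)<c^{-p}/2$, but that assumes more than the lemma does.

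\textbf{The missing idea} is to iterate the stopping-time argument, trading linear growth of the threshold for geometric decay of the probability. Put $\tau_1=\min\{j\ge m:|S_{m,j}|>t\}$ and $\tau_{i+1}=\min\{j\ge\tau_i+n_0:|S_{\tau_i+n_0,j}|>2t\}$.
On $\{n_0\max_j|A_j|\le t\}\cap\{\tau_k>N\}$ one checks that $M^*\le(3k-2)t$. Moreover $\{\tau_i=j\}\in\sigma[1,j]$, while the event $\{\max_{l\ge j+n_0}|S_{j+n_0,l}|>2t\}\in\sigma[j+n_0,\infty)$ has probability at most $\bbP(M^*>t)$. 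Applying the $\phi$-mixing bound $k-1$ times then yields
\[
\bbP\big(M^*>(3k-2)t\big)\le \bbP(M^*>t)\big(\bbP(M^*>t)+\phi(n_0)\big)^{k-1}+\bbP\big(n_0\max_j|A_j|>t\big).
\]
Use the corrected Ottaviani inequality to choose $t_0=C\big(\max_k\|S_{m,k}\|_{L^2}+n_0\|\max_j|A_j|\|_{L^p}\big)$, with $C=C(\phi(n_0))$, so that $\bbP(M^*>t_0)\le(1-\phi(n_0))/2$. For $t\ge t_0$ the bracket is then at most $\delta_0:=(1+\phi(n_0))/2<1$. Take $k=k(p,\phi(n_0))$ with $(3k-2)^p\delta_0^{k-1}\le 1/2$ and integrate. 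This gives $\|M^*\|_p^p\le(3k-2)^p\big(t_0^p+n_0^p\|\max_j|A_j|\|_p^p\big)+\frac12\|M^*\|_p^p$. That closes the proof with a constant depending only on $p$, $n_0$ and $\phi(n_0)$; in fact this route only uses $\phi(n_0)<1$.
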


\begin{lemma}\label{Non-uniform third moment lemma}
  Make Assumptions 1-6. There exists $C_3<\infty$ independent of $N$ such that $$\sup_{1\leq j \leq k_N} \mathbb{E}\left[|Y_j|^3\right]\leq C_3.$$  
\end{lemma}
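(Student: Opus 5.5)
We bound $\|Z_j\|_{L^3}\le C\tau$ uniformly in $j$ and $N$; dividing by $\tau$ then gives $\mathbb{E}[|Y_j|^3]\le C_3$. We split each block $P_j$ into its reduced part $\tilde P_j=P_j\setminus\mathcal{E}_\tau$ and its exceptional part $P_j\cap\mathcal{E}_\tau$. Writing $\tilde Z_j:=\sum_{i\in\tilde P_j}g_i(X_i)$ and $E_j:=\sum_{i\in P_j\cap\mathcal{E}_\tau}g_i(X_i)$, Minkowski's inequality gives $\|Z_j\|_{L^3}\le\|\tilde Z_j\|_{L^3}+\|E_j\|_{L^3}$. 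We treat the two pieces separately.

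\textbf{Reduced part.} By Assumption 1 we have $\phi(n_0)<1/2$ for some $n_0$, so the Rosenthal inequality for $\phi$-mixing sequences applies to the summands $g_i(X_i)$, $i\in\tilde P_j$, listed in increasing order. Passing to a subsequence does not increase the $\phi$-mixing coefficients, so mixing is preserved. We obtain
\begin{align*}
\|\tilde Z_j\|_{L^3}\le C\left(\Big\|\max_{i\in\tilde P_j}|g_i(X_i)|\Big\|_{L^3}+\max_{\text{initial segments } I\subseteq\tilde P_j}\Big\|\sum_{i\in I}g_i(X_i)\Big\|_{L^2}\right).
\end{align*}
The first term is at most $C\tau$ by Assumption 6.

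For the second term we use the block rule: indices are added to a block until its variance first reaches $\tau^2$. Every proper initial segment therefore has variance $<\tau^2$. The full block is the last such segment plus one summand $g_{i^*}(X_{i^*})$ with $i^*\notin\mathcal{E}_\tau$, so $\|g_{i^*}\|_{L^2}\le\|g_{i^*}\|_{L^3}\le\tau^{1/3}$. Hence every initial segment has $L^2$ norm at most $\tau+\tau^{1/3}\le 2\tau$. This gives $\|\tilde Z_j\|_{L^3}\le C\tau$, and it also shows that the last, possibly incomplete, block causes no trouble.

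\textbf{Exceptional part.} Here we use only the triangle inequality:
\begin{align*}
\|E_j\|_{L^3}\le\sum_{i\in\mathcal{E}_\tau}\|g_i(X_i)\|_{L^3}\le \text{Card}(\mathcal{E}_\tau)^{2/3}\Big(\sum_{i\in\mathcal{E}_\tau}\mathbb{E}|g_i(X_i)|^3\Big)^{1/3},
\end{align*}
where the second step is H\"older's inequality. By Assumption 5 the last factor is at most $C_{\text{exc}}^{1/3}\tau^{1-\epsilon/3}$. Each $i\in\mathcal{E}_\tau$ has $\mathbb{E}|g_i|^3>\tau$, so Assumption 5 also gives $\text{Card}(\mathcal{E}_\tau)\le C_{\text{exc}}\tau^{2-\epsilon}$. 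Combining,
\begin{align*}
\|E_j\|_{L^3}\le C\tau^{(4-2\epsilon)/3+1-\epsilon/3}=C\tau^{7/3-\epsilon},
\end{align*}
which is not $O(\tau)$.

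This crude count is the main obstacle. The cure should come from the re-insertion step: each exceptional index goes into a nearby block, so a single block receives only $O(1)$ exceptional indices (or a bounded number, since exceptional indices are sparse). If a block contains at most $L$ exceptional indices, then
\begin{align*}
\|E_j\|_{L^3}\le L^{2/3}\Big(\sum_{i\in\mathcal{E}_\tau}\mathbb{E}|g_i(X_i)|^3\Big)^{1/3}\le L^{2/3}C_{\text{exc}}^{1/3}\tau^{1-\epsilon/3}\le C\tau.
\end{align*}
Even with no control on $L$, the bound $\sum_{i\in P_j\cap\mathcal{E}_\tau}\mathbb{E}|g_i|^3\le C_{\text{exc}}\tau^{3-\epsilon}$ controls the third moment of $E_j$ once the exceptional summands inside a block are themselves handled by Rosenthal's inequality rather than by the triangle inequality. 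In that case Assumption 3 gives $\|E_j\|_{L^2}\le (v_{\max}\,\text{Card}(P_j\cap\mathcal{E}_\tau))^{1/2}$, and this is bounded by $C\tau$ when the re-insertion rule keeps the exceptional count per block at most $O(\tau^2)$. We would fix the re-insertion rule explicitly so that this holds.

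Finally, $C_3:=(2C)^3$ depends only on the constants in Assumptions 1, 3, 5 and 6, and not on $N$ or $j$.
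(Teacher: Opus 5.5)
Your reduced-part argument is the paper's argument: Rosenthal's inequality applied to $\tilde P_j$. You fill in details the paper omits, and fill them in correctly. Assumption 6 handles the maximal term. The stopping rule plus one non-exceptional summand with $\|g_{i^*}\|_{L^2}\le\tau^{1/3}$ handles the $L^2$ term. Removing $\mathcal{E}_\tau$ leaves a subsequence whose $\phi$-coefficients are no larger.

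On the exceptional part you leave an open end that your own computations already close. Drop route (a): nothing in the re-insertion step bounds the number $L$ of exceptional indices per block, and you would need $L\lesssim\tau^{\epsilon/2}$. Route (b) works for \emph{any} re-insertion rule. Apply Rosenthal to the exceptional indices of $P_j$, which again form a subsequence. The maximal term satisfies $\|\max_i|g_i(X_i)|\|_{L^3}^3\le\sum_{i\in\mathcal{E}_\tau}\mathbb{E}|g_i(X_i)|^3\le C_{\text{exc}}\tau^{3-\epsilon}$. By Assumption 3, each initial segment has variance at most $v_{\max}\,\text{Card}(\mathcal{E}_\tau)\le v_{\max}C_{\text{exc}}\tau^{2-\epsilon}$. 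Hence
\[
\|E_j\|_{L^3}\le C\bigl(C_{\text{exc}}^{1/3}\tau^{1-\epsilon/3}+(v_{\max}C_{\text{exc}})^{1/2}\tau^{1-\epsilon/2}\bigr)\le C'\tau .
\]
The ``$O(\tau^2)$ exceptional indices per block'' you wanted from the re-insertion rule is simply your global count $\text{Card}(\mathcal{E}_\tau)\le C_{\text{exc}}\tau^{2-\epsilon}$.

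This is also where you depart from, and improve on, the paper. The paper asserts that there are $O(1)$ exceptional indices. It then bounds $\mathbb{E}\bigl|\sum_{i\in P_j\cap\mathcal{E}_\tau}g_i(X_i)\bigr|^3$ by $C_{\text{exc}}\tau^{3-\epsilon}$ directly from Assumption 5. Both steps fail. The paper's own counting inequality yields only $\text{Card}(\mathcal{E}_\tau)\le C_{\text{exc}}\tau^{2-\epsilon}$, which is unbounded as $\tau\to\infty$. Assumption 5 controls the sum of the third moments, not the third moment of the sum. Your $\tau^{7/3-\epsilon}$ computation shows exactly why that shortcut does not work. The Rosenthal-plus-Assumption-3 bound above is what actually proves the lemma, with $C_3$ depending only on the mixing constants, $v_{\max}$, $C_{\text{exc}}$, and the constant in Assumption 6.
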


\textbf{Proof}: We first observe that by Assumption 5, there are $O(1)$ exceptional indices (indeed, $C_{exc} \,\tau^{3-\epsilon} \geq \sum_{i\in \mathcal{E}_\tau} \mathbb{E}[|g_i(X_i)|^3]\geq \text{Card}(\mathcal{E}_\tau)\tau$). This implies that the block variances are still of order $\tau^2$, and $\text{Var}(Z_j)\asymp \tau^2$ after re-inserting vertices.

Next, we show that the blocks have uniform third moments. Define $\tilde{P}_j := P_j \setminus \mathcal{E}_\tau$ and $\tilde{Z}_j := \sum_{i\in \tilde{P}_j}g(X_j) $. To bound $\sup_{1\leq j \leq k_N} \mathbb{E}\left[|Y_j|^3\right],$ we apply Rosenthal's inequality. For fixed $j$\, this yields \begin{align}
    \mathbb{E}\left[\left|\tilde{Z}_j\right|^3\right]^{1/3}=\mathbb{E}\left[\left|\sum_{i\in \tilde{P}_j}g_i(X_j)\right|^3\right]^{1/3} &\leq C'\tau
\end{align} where $C'$ is independent of $N$. Thus, $\mathbb{E}[|\tilde{Z_j}|^3]\leq \tilde{C}_3\tau^{3}$. 

Thus, there exists a constant $C_3$ independent of $N$ such that \begin{align}
    \mathbb{E}\left[\left|Z_j\right|^3\right]\leq 4\left(\mathbb{E}\left[|\tilde{Z}_j|^3\right] + \mathbb{E}\left[\mathlarger{|}\sum_{i\in P_j \cap \mathcal{E}_\tau }g_i(X_i)\mathlarger{|}^3\right]\right) \leq 4\left(C' \tau^3+ C_{\text{exc}}\tau^{3-\epsilon}\right) \leq C_3\tau^3.
\end{align}

Thus, we can write \begin{align}
    \mathbb{E}\left[|Y_j|^3\right] \asymp \mathbb{E}\left[|Z_j/\tau|^3\right] \leq C_3.
\end{align} 
\QEDB

This proves that Assumption 7 is justified under Assumptions 1-6. Now we prove the main theorem of this section.

\begin{theorem}[Berry-Esseen Bounds without Uniform Third Moment Bound]\label{Non Uniform Third Moment Application}
    Make Assumptions 1-7, and define $T  =  \sum_{j=1}^N g_j(X_j)$ (note $R=0$). There exists a constant $C<\infty$ independent of $N$ and $\varepsilon_p\to 0$ such that
\begin{align*}
    \sup_{x\in \mathbb{R}}\left|\bbP(T<\sigma_{N}x) -\Phi(x)\right|\leq C\sigma_N^{-1 + \varepsilon_p}.
\end{align*}
\end{theorem}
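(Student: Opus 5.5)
The plan is to reduce the statement to Theorem \ref{Berry-Esseen Bounds for Statistics of Weakly Dependent Samples Proof}, applied not to $\{g_j(X_j)\}$ but to the block sequence $\{Y_j\}_{j=1}^{k_N}$ built in the Block Construction. The number of blocks $k_N$ plays the role of $N$.

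Since $T=\sum_{j=1}^N g_j(X_j)=\tau\sum_{j=1}^{k_N}Y_j$, we have
\[
\bbP(T<\sigma_N x)=\bbP\Big(\sum_{j=1}^{k_N}Y_j<(\sigma_N/\tau)x\Big).
\]
Here $\text{Var}(\sum_j Y_j)=\sigma_N^2/\tau^2$. We take the remainder $R=0$ and $R_{j,k}=0$, so that $\gamma=0$ and $\mathbb{E}[R]=0$.

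The four assumptions of Theorem \ref{Berry-Esseen Bounds for Statistics of Weakly Dependent Samples Proof} then have to be checked for $\{Y_j\}$.

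\begin{enumerate}
\item \textbf{Mixing.} Each $Y_j$ is measurable with respect to $\sigma[\min P_j,\max P_j]$, and the blocks are consecutive intervals (the exceptional indices are reinserted into adjacent blocks). Every block except possibly the last contains at least one non-exceptional index. Hence block indices $m$ apart are separated by at least $m-1$ original indices, and $\phi_Y(m)\le\phi(m-1)\le 2^p c_{\text{mix}} m^{-p}$.
\item \textbf{Third moments.} The uniform bound $\sup_j\mathbb{E}|Y_j|^3\le C_3$ is Lemma \ref{Non-uniform third moment lemma}.
\item \textbf{Linear variance.} This is the core step. We need $\text{Var}(\sum_{j\le k}Y_j)\asymp k$. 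By construction, $\text{Var}(Y_j)\ge 1$ up to the $O(1)$ exceptional additions. The upper bound uses Assumption \ref{Sub-linear variance growth assumption}: a reduced block is closed as soon as its variance reaches $\tau^2$, and adding one index changes the standard deviation by at most $\|g_i\|_{L^2}\le M^{1/3}\tau^{1/3}$ for non-exceptional $i$. So $\text{Var}(Y_j)\le c$. Let $\sigma_j^2 := \text{Var}(Y_j)$. For the cross terms I would bound $|\text{Cov}(Y_i,Y_j)|$ by a $\phi$-mixing covariance inequality with $L^3$ norms,
\[
|\text{Cov}(Y_i,Y_j)|\le 2\phi(d)^{2/3}\|Y_i\|_{L^3}\|Y_j\|_{L^3}\le 2C_3^{2/3}c_{\text{mix}}^{2/3}d^{-2p/3},
\]
where $d$ is the gap between the blocks. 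Summed over $j\ne i$, this is bounded by a constant multiple of $C_3 c_{\text{mix}}$. Assumption \ref{Inter block covariance control assumption} ($24(1/2)^{1/3}C_3c_{\text{mix}}\le1$) is exactly what guarantees that these off-diagonal contributions are at most half of $\sum_j\sigma_j^2$. Therefore $\tfrac12 k\le\text{Var}(\sum_{j\le k}Y_j)\le Ck$, with the constants independent of $N$.
\item \textbf{Variance bounded below.} This follows from item 3 as soon as $k_N\ge1$. Since the variance diverges, $k_N\to\infty$.
\end{enumerate}

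Theorem \ref{Berry-Esseen Bounds for Statistics of Weakly Dependent Samples Proof} then gives a bound of $A k_N^{-1/2+\varepsilon_p}$. From $k_N\asymp\text{Var}(\sum Y_j)=\sigma_N^2/\tau^2$ we get $k_N^{-1/2}\asymp\tau\sigma_N^{-1}$.

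Taking $\tau=\sigma_N^{\eta}$ with $\eta$ small and absorbing it into the exponent, the bound becomes $C\sigma_N^{-1+\varepsilon_p}$ with a redefined $\varepsilon_p=\eta+2\varepsilon_p$. This is legitimate because the statement only requires some $\varepsilon_p\to0$ and $\tau$ is ours to choose, with $\eta\to0$ coupled to $p$.

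The main obstacle is item 3. There are two parts to it:
\begin{itemize}
\item making the covariance summation produce the specific constant in Assumption \ref{Inter block covariance control assumption};
\item ensuring that the reinsertion of exceptional indices and the possibly incomplete last block do not destroy the bounds $\text{Var}(Y_j)\asymp1$.
\end{itemize}
For the incomplete last block, I would merge it into the preceding block. For the exceptional indices, I would use the fact that there are $O(1)$ of them per Assumption 5, each with $L^3$ norm at most $(C_{\text{exc}}\tau^{3-\epsilon})^{1/3}=o(\tau)$, so they perturb $\|Z_j\|_{L^2}$ by only $o(\tau)$.
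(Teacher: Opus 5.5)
Your architecture is the paper's: pass to the block sequence $Y_j$, take third moments from Lemma \ref{Non-uniform third moment lemma}, get linear block variance from Assumption 7, then apply Theorem \ref{Berry-Esseen Bounds for Statistics of Weakly Dependent Samples Proof} with $R=0$ and $k_N$ in place of $N$. The real differences are in the two steps that carry content.

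For the cross terms you use the $\phi$-mixing (Ibragimov) inequality $|\text{Cov}(Y_i,Y_j)|\le 2\phi(d)^{2/3}\|Y_i\|_{L^3}\|Y_j\|_{L^3}$. This gives a per-row bound $4(C_3c_{\text{mix}})^{2/3}\sum_{d\ge1}d^{-2p/3}$. The paper instead goes through Corollary \ref{Rio Covariance Corollary}. Summed correctly, that yields $12\cdot 2^{-1/3}C_3^{2/3}c_{\text{mix}}^{1/3}\sum_{d\ge1}d^{-p/3}$ per row, not the displayed $12(1/2)^{1/3}C_3c_{\text{mix}}$.

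Because Assumption 7 constrains the product $C_3c_{\text{mix}}$, your inequality is the one it actually controls. Your bound comes to roughly $0.56$ per row for large $p$. So you get $\text{Var}(\sum_{j\le k}Y_j)\ge ck$ with $c$ about $0.4$, not $\frac12 k$ as you state; the relevant quantity is $(C_3c_{\text{mix}})^{2/3}$, not $C_3c_{\text{mix}}$. By contrast, the Rio route at the edge of Assumption 7 with $C_3\approx1$ only bounds the off-diagonal mass per row by about $3.6$, which exceeds the diagonal.

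You also correctly find $k_N\asymp\sigma_N^2/\tau^2$. The paper asserts $k_N\asymp\sigma_N^2$ and thereby drops a factor $\tau^{1-2\varepsilon_p}$; with its own example $\tau=\sigma_N^{\eta}$ and $\eta$ fixed, the argument gives only $\sigma_N^{-1+\eta+2\varepsilon_p}$. Your absorption of this factor is fine. However, Assumptions 5--7 are formulated for a given $\tau$, so taking $\eta=\eta_p\to0$ (or $\tau$ sub-polynomial) is a requirement on the hypotheses, not a free choice.

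Finally, one slip you share with the paper. Each $i\in\mathcal{E}_\tau$ contributes more than $\tau$, so Assumption 5 gives only $\text{Card}(\mathcal{E}_\tau)<C_{\text{exc}}\tau^{2-\epsilon}$, not $O(1)$. The $o(\tau)$ perturbation of $\|Z_j\|_{L^2}$ still holds, but via Assumption 3: $\text{Var}(\sum_{i\in P_j\cap\mathcal{E}_\tau}g_i(X_i))\le v_{\text{max}}C_{\text{exc}}\tau^{2-\epsilon}$.
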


\textbf{Proof}: By Lemma \ref{Non-uniform third moment lemma}, we know that $\sup_{1\leq j \leq k_N} \mathbb{E}[|Y_j|^3]\leq C_3$ and block sizes are of order $\tau^2.$ We will show that the block decomposition provides a partition $P_1, P_2, \cdots, P_{k_N}$ of $\{1,2, \cdots, N\}$ that satisfies $k_N \asymp \sigma_N^2$ and $\text{Var}(\sum_{i\in P_j} g_i(X_i))\asymp \tau^2$ for each $j$.

We next ensure that $\text{Var}(\sum_{j=1}^{k_N} Y_j)\asymp k_N$. By applying Corollary \ref{Rio Covariance Corollary}
(using that $\alpha(h)\leq \phi(h)/2$) we see that \begin{align}
    2\,\mathlarger{|}\sum_{1\leq i < j \leq k_N}\text{Cov}(Y_i, Y_j)\mathlarger{|}
    &\leq 2\,\sum_{j=1}^{k_N-1} 6\, \text{max}\left(\mathbb{E}\left[|Y_i|^3\right]\right)^{2/3}\,j\,\alpha(j)\\
    &\leq 12 (1/2)^{1/3} C_3 \,c_{\text{mix}}\, k_N.\label{inter block covariance bound}
\end{align} Recall that \begin{align}
    \text{Var}\left(\sum_{j=1}^{k_N} Y_j\right) = \sum_{j=1}^{k_N} \text{Var}(Y_j) + 2\sum_{1\leq i < j \leq k_N} \text{Cov}(Y_i, Y_j).
\end{align} In particular, we have linear variance over the blocks since, $\sum_{j=1}^{k_N} \text{Var}(Y_j) \geq 4\,|\sum_{1\leq i < j \leq k_N}\text{Cov}(Y_i, Y_j)|$ and $\text{Var}(Y_j)\geq1 $. 
Thus, we may apply Theorem 2 with $R=0$, yielding the result. 
\QEDB

\begin{remark}
    We could have replaced Assumptions 3 and 4 with other assumptions. In particular, Assumptions 3 and 4 ensure that the exceptional third-moment contribution for each block is controlled, and does not outweigh the non-exceptional contribution. The benefit of Assumptions 3 and 4 is that they are easily verifiable in practice. Assumption 5 gives a concrete criterion that the variance of the block sums grows linearly in terms of only the mixing coefficients. While it is a natural assumption, it is entails a non-trivial additional requirement. 
    
    As for Assumption 6, it clearly holds if $\|g_j\|_\infty,\, j\not\in\mathcal E_\tau$ is of size $\tau$, which can be applied when $\tau=\sigma_N^a$ for some a small enough and $\|g_j\|_{\infty}\asymp j^a,\, j\not\in\mathcal E_\tau$, for instance. The unbounded case can be treated by truncation. One can replace $g_j$ with $g_j\mathbbm 1_{\{|g_j|\leq \sigma_N^a\}}$. However, the issue here is that we sum of the error bounds so that this will yield something helpful only when $\sigma_N\geq cN^{\epsilon}$ for an appropriate (small) $\epsilon>0$. The exact details are left for the reader.
\end{remark}

In the following corollary, we show that even without Assumption 7, Berry-Esseen rates can still be obtained  by inserting small  gaps (relative to the blocks) between the blocks. The gaps ensure that $\text{Cov}(Y_i, Y_{i+1})$ is small for all $i$ via mixing. The proof of this corollary involves bounding $\mathbb{E}\,[|R|]$ and $\gamma$.

\begin{corollary}\label{Non uniform third moment corollary}
    Make all of the assumptions of the theorem except Assumption 7. Then, for any choices $0<\beta<\alpha < 2$, there exists $C_{\alpha,\beta}>0$ independent of $N$ such that for $p$ large enough, \begin{align*}
    \sup_{x\in \mathbb{R}}\left|\bbP(T<\sigma_N  x ) - \Phi(x)\right|\leq  C_{\alpha,\beta}\, \sigma_N^{2\beta/3 - 5\alpha/6 + 2/3 + \varepsilon_p} + C_{\alpha,\beta}\,\sigma_N^{-1+\varepsilon_p}.
\end{align*}
\end{corollary}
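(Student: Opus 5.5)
We would prove the corollary by a big-block/small-block construction and then apply Theorem \ref{Berry-Esseen Bounds for Statistics of Weakly Dependent Samples Proof} to the big blocks, placing everything else into the remainder $R$. Concretely, set $\tau=\sigma_N^{\alpha}$ and build the blocks $P_1,\dots,P_{k_N}$ exactly as in the block construction of Section \ref{Non-uniform third moment subsection}, so that $\text{Var}(Z_j)\asymp \sigma_N^{2\alpha}$ and $k_N\asymp\sigma_N^{2-2\alpha}$. From the right end of each block $P_j$ we remove a short gap $G_j$ of variance of order $\sigma_N^{2\beta}$. Because of Assumption \ref{Sub-linear variance growth assumption} and the construction, such a gap has length at least of order $\sigma_N^{2\beta}/v_{\max}$, and hence $\phi(\text{gap})\le c_{\text{mix}}\sigma_N^{-2\beta p}$. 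Let $Y_j$ be the normalized sum over $P_j\setminus G_j$, and write
\[
T=\sum_j Z_j=\sum_j (P_j\setminus G_j)\text{-sums}+R,\qquad R:=\sum_j\sum_{i\in G_j}g_i(X_i).
\]
After rescaling by $\tau$, the statistic becomes $\sum_{j=1}^{k_N}Y_j+R/\tau$, with $k_N$ playing the role of $N$.

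\textbf{Step 1: verify the hypotheses of the main theorem for the $Y_j$.} Lemma \ref{Non-uniform third moment lemma}, which uses Rosenthal's inequality, still gives $\sup_j\mathbb{E}|Y_j|^3\le C_3$, because removing $G_j$ only shrinks the blocks. For linear variance, the covariances of $Y_i$ and $Y_j$ with $i<j$ are now separated by at least one gap. Using Corollary \ref{Rio Covariance Corollary} together with $\phi(\text{gap}+\ell)\le c_{\text{mix}}(\sigma_N^{2\beta}+\ell)^{-p}$, the total covariance is $o(k_N)$ for $p$ large. This removes the smallness condition of Assumption 7, and gives $\text{Var}(\sum Y_j)\asymp k_N$. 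The mixing coefficients of the block sequence are dominated by those of $\{X_i\}$, so Assumption 1 of the main theorem holds. Truncation is handled as in the main proof.

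\textbf{Step 2: bound $\mathbb{E}|R|$ and $\gamma$.} Take $R_{j,k}$ to be the gap sum with the gaps indexed in $[j,k]$ deleted. Then $R_{j,k}-R_{j,k-1}$ is a single normalized gap sum, and Rosenthal's inequality gives
\[
\gamma\lesssim \|G\|_{L^{3/2}}/\tau\lesssim \sigma_N^{\beta-\alpha}.
\]
Also $\mathbb{E}|R|\le\|R\|_{L^2}/\tau$. Since $\mathrm{Var}(R)\lesssim k_N\sigma_N^{2\beta}$ (the gaps are separated, so cross terms are small), we get $\mathbb{E}|R|/\tau\lesssim k_N^{1/2}\sigma_N^{\beta-\alpha}$. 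Substituting into the main theorem, the error is
\[
k_N^{-1/2+\varepsilon_p}+k_N^{-1/2}\mathbb{E}|R|+\gamma k_N^{\varepsilon_p}.
\]
Combining this with $k_N\asymp\sigma_N^{2-2\alpha}$ and converting the normalization from $\text{Var}(\sum Y_j)$ back to $\sigma_N$ gives a bound of the stated form. The variance change is $O(\sigma_N^{2\beta-2\alpha})$ relatively, and it is absorbed by a standard normal-comparison estimate $\sup_x|\Phi(x)-\Phi(cx)|\lesssim|c-1|$. The precise exponents $2\beta/3-5\alpha/6+2/3$ come from optimizing these pieces, and in particular from the $L^{3/2}$ moment of the gap sums.

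\textbf{Main obstacle.} The delicate point is Step 2. The main theorem's $\gamma$ term is multiplied by $k_N^{\varepsilon_p}$, and $R$ is not small relative to $\tau$ but only relative to $\sigma_N$. One has to track the exponents carefully so that the gap contribution $\sigma_N^{\beta-\alpha}$ times the powers of $k_N$ yields a net negative power of $\sigma_N$. One also has to check that the gaps are long enough in index for the polynomial mixing to kill the cross-covariances. I would first try to make the gap-length bound rigorous using Assumption \ref{Sub-linear variance growth assumption}, and then do the exponent bookkeeping.
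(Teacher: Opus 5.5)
Your scheme is the paper's: alternate big blocks and gaps, put the gaps into $R$, and apply Theorem~\ref{Berry-Esseen Bounds for Statistics of Weakly Dependent Samples Proof} to the blocks with $k_N$ in place of $N$. However, the step you defer, the exponent bookkeeping, does not produce the stated bound.

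Start with the parametrization. Your normalization $\tau=\sigma_N^{\alpha}$ (block variance $\sigma_N^{2\alpha}$, gap variance $\sigma_N^{2\beta}$) only makes sense for $\alpha<1$, while the statement allows $\alpha<2$. The paper uses block variance $\tau^2\asymp\sigma_N^{\alpha}$ and gap variance $\asymp\sigma_N^{\beta}$, so $k_N\asymp\sigma_N^{2-\alpha}$. In that normalization your estimates read $\gamma\lesssim\sigma_N^{(\beta-\alpha)/2}$, $k_N^{-1/2}\mathbb{E}|R|\lesssim\sigma_N^{(\beta-\alpha)/2}$ and $k_N^{-1/2+\varepsilon_p}\asymp\sigma_N^{-1+\alpha/2+\varepsilon_p}$. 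The exponent $2\beta/3-5\alpha/6+2/3$ does not come from optimizing these. In the paper it comes from taking $R_{j,k}=\mathbb{E}[R\mid\sigma^C[j,k]]$ and interpolating, $\|D_k\|_{L^{3/2}}\le\|D_k\|_{L^2}^{2/3}\|D_k\|_{L^1}^{1/3}$. There $\|D_k\|_{L^2}^2\lesssim\sigma_N^{\beta-\alpha}$, and $\|D_k\|_{L^1}\le 2\mathbb{E}|R|\lesssim\sigma_N^{2-3\alpha/2+\beta}$ is a crude bound from the triangle inequality over all gaps. Your choices are sharper: deleting gaps $j,\dots,k$ gives $R_{j,k}-R_{j,k-1}=-G_k/\tau$, and you bound $R$ in $L^2$ using gap separation. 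Your $\gamma$- and $\mathbb{E}|R|$-terms are both already bounded by the paper's first term, because $\bigl(2\beta/3-5\alpha/6+2/3\bigr)-(\beta-\alpha)/2=(4+\beta-2\alpha)/6>0$. Your Step 1, linear block variance from the gaps, is also something the paper only asserts.

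The genuine gap is the leading term of Theorem~\ref{Berry-Esseen Bounds for Statistics of Weakly Dependent Samples Proof} applied to the blocks. The term $k_N^{-1/2+\varepsilon_p}\asymp\sigma_N^{-1+\alpha/2+\varepsilon_p}$ is not $O(\sigma_N^{-1+\varepsilon_p})$, and it exceeds $\sigma_N^{2\beta/3-5\alpha/6+2/3}$ whenever $4\alpha>2\beta+5$. So ``converting back to $\sigma_N$ gives a bound of the stated form'' is false in that range, and no bookkeeping can repair it.

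For a counterexample, take independent $g_j$ equal to $\pm\tau$ with probability $\tau^{-3}/2$ each and $0$ otherwise, with $\tau^2=\sigma_N^{\alpha}$ as in the paper's construction. These satisfy Assumptions 1--6: $\mathbb{E}|g_j|^3=1$ so there are no exceptional indices, $|g_j|\le\tau$, and $\text{Var}(g_j)=1/\tau$. Yet $S_N/\tau$ is a symmetric integer-valued sum with variance $\sigma_N^2/\tau^2$. Hence $|\mathbb{P}(T<0)-\Phi(0)|=\frac12\mathbb{P}(S_N=0)$ is of exact order $\tau/\sigma_N=\sigma_N^{-1+\alpha/2}$.

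The paper's proof makes the same leap. It ends with ``applying Theorem 1 completes the proof'' without converting $k_N^{-1/2}$ or tracking $k_N^{-1/2}\mathbb{E}|R|$. What your route actually gives is $C\sigma_N^{(\beta-\alpha)/2+\varepsilon_p}+C\sigma_N^{-1+\alpha/2+\varepsilon_p}$. This yields the stated inequality only when $\alpha\le\beta/2+5/4$.
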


\textbf{Proof}: Construct alternating blocks $Y_i = Z_i/\tau$ and gaps $G_i/\tau$ by adding indices until a variance threshold for each is met; this ensures that we have $\text{Var}(Z_i) \asymp \tau^2 \asymp \sigma_N^{\alpha}$ and $\text{Var}(G_i) \asymp \sigma_N^\beta$ for any fixed $0<\beta < \alpha < 2$  for all $i, N$. We let $R = \sum_{j=1}^{k_N} G_j/\tau$ and $S = \sum_{j=1}^{k_N} Y_j$, and we must now bound $\mathbb{E}[|R|]$ and $\gamma$.

First we bound $\mathbb{E}[|R|]$. We write \begin{align*}
    \mathbb{E}\left[|R|\right] &= \frac{1}{\tau}\mathbb{E}\left[\mathlarger|\sum_{j=1}^{k_N} G_j\mathlarger|\right]\\
    &\leq  \frac{1}{\tau} \sum_{j=1}^{k_N} \sqrt{\text{Var}(G_j)}\\
    &\leq \frac{1}{\tau} k_N \sqrt{C\sigma_N^\beta}\\
    &\asymp \sigma_N^{\beta/2 - \alpha/2} \cdot \frac{\sigma_N^{2}}{\sigma_N^{\alpha}} \\
    &= \sigma_N^{2-3\alpha/2 + \beta}.
\end{align*}

Now we bound $\gamma$. We will be bounding
$$\mathbb{E}\left[|D_k|^2\right]
:= \mathbb{E}\left[|R_{j,k}-R_{j,k-1}|^2\right] 
= \mathbb{E}\left[\mathlarger|\mathbb{E}\left[R\,|\,\sigma^C[j,k]\right]
-\mathbb{E}\left[R\,|\,\sigma^C[j,k-1]\right]\mathlarger|^2\right],$$
and then writing \begin{align}
    \mathbb{E}\left[R\,|\,\sigma^C[j,k]\right] &= \frac{1}{\tau}\mathbb{E}\left[\left(\sum_{i=j}^k G_k\right)+ \text{other }G_i \text{ terms } |\,\sigma^C[j,k]\right] \\
    &= \frac{1}{\tau}\left[\mathbb{E}\left[\left(\sum_{i=j}^k G_k\right) \,|\,\sigma^C[j,k]\right] + \text{other }G_i \text{ terms}\right].
\end{align}
Thus, after cancellation of terms under subtraction,\begin{align}
    \mathbb{E}\left[R\,|\,\sigma^C[j,k]\right]-\mathbb{E}\left[R\,|\,\sigma^C[j,k-1]\right] = \frac{1}{\tau}\mathbb{E}\left[\sum_{i=j}^k G_k\,|\, \sigma^C[j,k]\right] - \frac{1}{\tau}\mathbb{E}\left[\sum_{i=j}^k G_k\,|\, \sigma^C[j,k-1]\right].
\end{align} Using the real inequality $(x+y)^2\leq 2x^2 + 2y^2$ we can then write \begin{align}
    \text{Var}(D_k) &\leq 2\,\text{Var}\left(\mathbb{E}\,\mathlarger[\sum_{i=j}^k G_i\,|\, \sigma^C[j,k]\mathlarger]\right) + 2\,\text{Var}\left(\mathbb{E}\,\mathlarger[\sum_{i=j}^k G_i\,|\, \sigma^C[j,k-1]\mathlarger]\right)\\
    &= 2\,\text{Var}\left(\mathbb{E}\,\mathlarger[\sum_{i=j}^k G_i\,|\, \sigma^C[j,k]\mathlarger]\right) + 2\,\text{Var}\left(\mathbb{E}\,\mathlarger[G_k + \sum_{i=j}^{k-1} G_i\,|\, \sigma^C[j,k-1]\mathlarger]\right)\\
    &\leq 2\text{Var}\left(\mathbb{E}\mathlarger[\sum_{i=j}^k G_i\,|\, \sigma^C[j,k]\mathlarger]\right) + 4\text{Var}\left(\mathbb{E}\mathlarger[\sum_{i=j}^{k-1} G_i\,|\, \sigma^C[j,k-1]\mathlarger]\right) + 4\text{Var}\left(\mathbb{E}\mathlarger[G_k\,|\, \sigma^C[j,k-1]\mathlarger]\right).\label{Var Dk decomposition}
\end{align} The first two terms of \ref{Var Dk decomposition} will be controlled through mixing, the third through a crude bound. 

First we bound  $\text{Var}\,(\mathbb{E}\,[\sum_{i=j}^k G_i \,|\, \sigma^C[j,k]])$. We write \begin{align}
    \text{Var}\left(\mathbb{E}\,\mathlarger[\sum_{i=j}^k G_i\,|\, \sigma^C[j,k]\mathlarger]\right) &= \sup_{g\in L^2,\, \|g \|_{L^2 (\sigma^C[j,k])} = 1}\left[\text{Cov}\,\mathlarger{(}\sum_{i=j}^k G_i, g\mathlarger{)}\right]^2\\
    &\leq \sup_{g\in L^2,\, \|g \|_{L^2 (\sigma^C[j,k])} = 1}\left[\phi(r) \left\|\sum_{i=j}^k G_i\right\|_{L^2}\right]^2\\
    &\leq C\left[N^{-\beta p}\cdot o(\sigma_N^2)]^2\right]^2,
\end{align} which is trivial for large $p$ by Assumption \ref{Sub-linear variance growth assumption}. The same idea works in bounding $4\,\text{Var}(\mathbb{E}\,[\sum_{i=j}^{k-1} G_i\,|\, \sigma^C[j,k-1]])$. 

Next, we bound $\text{Var}(\mathbb{E}\,[G_k\,|\, \sigma^C[j,k-1]])$. Since each gap $G_k$ is of variance $\text{Var}(G_k)\asymp \sigma_N^{\beta}$, we have that \begin{align}
    \text{Var}\left(\mathbb{E}\left[G_k\,|\, \sigma^C[j,k-1]\right]\right)\leq \text{Var}\left(G_k\right) = \text{Var}\left( \sum_{i\in G_k} \frac{g(X_i)}{\tau}\right) = O\left(\frac{r}{\tau^2}\right) = O\left(\sigma_N^{\beta-\alpha}\right).
\end{align}

Thus, using our previous bounds, as well as $\|D_k\|_{L^1} \leq 2\,\mathbb{E}[|R|]$, and assuming that $p$ is large enough that $\text{Var}(G_k)$ is the leading contribution of $\text{Var}(D_k)$, we have that
\begin{align}
    \mathbb{E}[|D_k|^{3/2}]^{2/3} &\leq \|D_k\|_{L^2}^{2/3}\, \| D_k\|_{L^1}^{1/3}\\
    &\leq C\,[\sigma_N^{\beta-\alpha}]^{1/3}\,[\sigma_N^{2-3\alpha/2 + \beta}]^{1/3}\\
    &= C\,\sigma_N^{2\beta/3 - 5\alpha/6 + 2/3}, \label{Gamma final bound}
\end{align}
where $C$ is a constant independent of $j$ and $N.$ Applying Theorem 1 with these bounds for $\mathbb{E}[|R|]$ and $\gamma$ completes the proof.
\QEDB

For this corollary, it is ideal that we optimize the Berry-Esseen rate by choosing $\beta$ small and $\alpha$ large. Doing so we may obtain non-trivial rates. It is likely possible to obtain faster rates in Corollary \ref{Non uniform third moment corollary} by modifying our proof.

If we were to include an external remainder $R\neq 0$, then we would need to bound $\mathbb{E}[R]$ and $\gamma$, defined with respect to the  sequence $\{Y_j\}_{j=1}^{k_N}$.

\subsection{Application to Uniformly Bounded Summands Without Assumptions on the Variance Growth Rate}\label{Bounded}
Let us consider a sequence of the form 
$$
T_N=\sum_{j=1}^Ng_j(X_j)+R_N(X_1, \cdots,X_N):=S_N+R.
$$
Assume that $\sup_{j,N}\|g_j(X_j)\|_{L^\infty}<\infty$ and that $\bbE[g_j(X_j)]=0$ for all $j,N$. Then by \cite[Theorem 6.17]{PelBook} there exists a constant $C>0$ such that for every $j<k$ we have 
$$
\left\|\sum_{\ell=j}^{j+k-1}g_\ell(X_\ell)\right\|_{L^3}\leq C\left(1+\max_{L\leq k}\left\|\sum_{\ell=j}^{j+L-1}g_\ell(X_\ell)\right\|_{L^2}\right).
$$

Next, let us assume that $\sigma_N=\|S_N\|_{L^2}\to\infty$. In what follows we explain how to prove CLT rates without any other growth rates on $\sig_n$. Let us take a sufficiently large constant $A>1$, let us set $a_1=1$ and let $b_1$ be the first index such that $\|S_{b_1}\|_{L^2}\geq A$. Next, set $a_2=b_1+1$ and let $b_2>a_1$ be the first index such that $\|S_{b_2}-S_{b_1}\|_{L^2}\geq A$. Continuing this way, we get intervals in the integers $B_{i,N}=\{a_i,a_{i}+1,\cdots,b_i\}$ such that $a_i<b_i<a_{i+1}$ and their union is $\{1,2,\cdots,N\}$. 
Define 
$$
\tilde X_j=\tilde X_{j,N}:=(X_k)_{k\in B_{j,N}}
$$
and 
$$
G_j(\tilde X_j):=\sum_{k\in B_{j,N}}g_k(X_k).
$$
Then 
$$
\sup_{j}\|G_j(\tilde X_j)\|_{L^3}<\infty.
$$
Now, since $\phi(m)=O(m^{-p})$ and $p$ is large enough, we have
$$
 \sup_k\sum_{j\not=k}|\text{Cov}\,(G_k(\tilde X_j),G_j(\tilde X_j))|\leq \sup_k\sum_{j\not\in B_k}|\text{Cov}\,(G_k(\tilde X_k),g_j(X_j))|\leq BA
$$
for some constant $B$ which does not depend on $A$. Here we used that  $\rho(m)\leq 2\sqrt{\phi(m)}=O(m^{-p/2})$ and that 
$$
|\text{Cov}(G_k(\tilde X_k),g_j(X_j))|\leq 4\,\rho(m_{j,k})\|G_k(\tilde X_k)\|_{L^2}\|g_j(X_j)\|_{L^2},
$$
where $m_{j,k}$ is the distance between $j$ and $B_k$ (so what we need is $p>2$ here).
Therefore, if $A$ is large enough, then
 for every interval in the integers $I\subset \{1,2,\cdots,k_N\}$ we have 
$$
a\,\text{Card} (I)\leq\text{Var}\left(\sum_{j\in I}G_j(\tilde X_j)\right)\leq b\, \text{Card} (I),
$$
where $a,b>0$ are constants that depend only on $A$, and $\text{Card}(I)$ is the size of $I$. It also follows that the number $k_N$ of sets $B_{i,N}$ satisfies $k_N\asymp\sig_N^2$.

Next, write
$$
T=\sum_{j=1}^{k_N}G_j(\tilde X_j)+\tilde R,
$$
where 
$$
\tilde R=R=R(\tilde X_1,\tilde X_2,\cdots,\tilde X_{k_N}).
$$
Thus, the entire setup reduces to the case when the variance of the partial sum is linear in the number of summands $k_N$ and the third moment is uniformly bounded. We thus get CLT rates of order $\sig_N^{-1+\varepsilon_p}$ under appropriate restrictions on $\tilde\gamma$ and $\bbE[|R|]$, where $\tilde\gamma$ corresponds to $\gamma$ when viewing $R$ as a function of $\tilde X_j$.

%%%%%%%%%%%%%%%%%%%%%%%%%%%%%%%%%%%%%%%%%%%%%%%%%%%%%%%%%%%%%%%%%%%%%%%%%%%%%%%%%%%%%%%%%%%%%%%%%%%%%%%%%%%%%%%%%%%%%%%%%%%%%%%%%%%%%%%%%%%%%%%%%%%%%%%%%%%%%%%%%%%%%%%%%%%%%%%%%%%%%%%%%%%%%%%%%%%%%%%%%%%%%%%%
\subsection{Alternative Bounds for Remainder Terms}
The following theorem can be used to easily obtain Berry-Esseen rates for statistics of the form $T' = T + R'$, where $T = S+R$ is a statistic with Berry-Esseen rates already established. The following theorem is useful when it is difficult to bound $\gamma$, but we have control of the variance of $R'_N$, where $R'_N$ is a (possibly additional) remainder term. This requires some level of control over the size of $R'_N$, as in Theorem \ref{Berry-Esseen Bounds for Statistics of Weakly Dependent Samples Proof}. We could, for example, assume that $\text{Var}(R'_N) = o(\sigma_N^2)$ and $\mathbb{E}\,[|R'_N|]= o(\sigma_N)$. These are both very natural assumptions and seem to be minimal. To obtain Berry-Esseen bounds using Theorem \ref{Berry-Esseen Bounds for Statistics of Weakly Dependent Samples Proof}, we must bound $\mathbb{E}[R_N]$ and $\gamma$.  The following theorem provides a simple alternative for obtaining Berry-Esseen rates using Theorem \ref{Non Uniform Third Moment Application}, with a remainder term or an additional remainder term. 

\begin{theorem}[Alternative Remainder Bounds for Theorem \ref{Non Uniform Third Moment Application} for Statistics with Additional Remainder] \label{Remainder Theorem}
\end{theorem}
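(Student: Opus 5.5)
Since the statement of Theorem \ref{Remainder Theorem} is not spelled out above, I take it to be the following natural form. Suppose $T=S+R$ already satisfies a Berry--Esseen bound
$$\sup_x|\bbP(T<\sigma_N x)-\Phi(x)|\le \Delta_N,$$
for instance $\Delta_N=C\sigma_N^{-1+\varepsilon_p}$ from Theorem \ref{Non Uniform Third Moment Application}. Then $T'=T+R'$ should satisfy
$$\sup_x|\bbP(T'<\sigma_N x)-\Phi(x)|\le \Delta_N + C\min\Big\{\big(\bbE[R'^2]/\sigma_N^2\big)^{1/3},\ \big(\bbE|R'|/\sigma_N\big)^{1/2}\Big\}.$$
Here $\bbE[R'^2]=\text{Var}(R'_N)+(\bbE R'_N)^2$, so this is $o(1)$ under the hypotheses $\text{Var}(R'_N)=o(\sigma_N^2)$ and $\bbE|R'_N|=o(\sigma_N)$. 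The plan is a direct perturbation argument that does not re-enter the Stein-type proof of Theorem \ref{Berry-Esseen Bounds for Statistics of Weakly Dependent Samples Proof}; in particular it never needs the quantity $\gamma$ for $R'$.

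\textbf{Step 1 (sandwich).} Fix $\delta>0$ and $x\in\mathbb R$. From the event inclusions
$$\{T<\sigma_N(x-\delta)\}\setminus\{|R'|\ge\sigma_N\delta\}\subseteq\{T'<\sigma_N x\}\subseteq\{T<\sigma_N(x+\delta)\}\cup\{|R'|\ge\sigma_N\delta\}$$
I get
$$\bbP(T<\sigma_N(x-\delta))-\bbP(|R'|\ge\sigma_N\delta)\le \bbP(T'<\sigma_N x)\le \bbP(T<\sigma_N(x+\delta))+\bbP(|R'|\ge\sigma_N\delta).$$

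\textbf{Step 2 (apply the known bound for $T$).} I replace $\bbP(T<\sigma_N(x\pm\delta))$ by $\Phi(x\pm\delta)$ at cost $\Delta_N$. I then use that $\Phi$ is Lipschitz with constant $(2\pi)^{-1/2}$, so $|\Phi(x\pm\delta)-\Phi(x)|\le\delta/\sqrt{2\pi}$. This gives, uniformly in $x$,
$$|\bbP(T'<\sigma_N x)-\Phi(x)|\le \Delta_N+\frac{\delta}{\sqrt{2\pi}}+\bbP(|R'|\ge\sigma_N\delta).$$

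\textbf{Step 3 (tail of $R'$ and choice of $\delta$).} I bound the tail in two ways. By Chebyshev, $\bbP(|R'|\ge\sigma_N\delta)\le \bbE[R'^2]/(\sigma_N^2\delta^2)$, and choosing $\delta=(\bbE[R'^2]/\sigma_N^2)^{1/3}$ yields the first term of the minimum. By Markov, $\bbP(|R'|\ge\sigma_N\delta)\le\bbE|R'|/(\sigma_N\delta)$, and choosing $\delta=(\bbE|R'|/\sigma_N)^{1/2}$ yields the second. If $R'$ has higher moments, the same optimisation with $\bbE|R'|^q$ gives the exponent $q/(q+1)$.

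\textbf{Main obstacle: normalisation.} I do not expect real difficulty in Steps 1--3, since they are elementary. The step needing care is the normalising constant. The statement should normalise by $\sigma_N=\|S_N\|_{L^2}$, in which case the above is complete. If instead it normalises by $\sigma'_N=\|T'\|_{L^2}$ (or by the standard deviation of $T$), I would add a scaling step. By Minkowski, $|\sigma'_N/\sigma_N-1|\le(\|R\|_{L^2}+\|R'\|_{L^2})/\sigma_N$. I would then use $\sup_x|\Phi(\lambda x)-\Phi(x)|\le C|\lambda-1|$ for $\lambda$ near $1$, which costs one further additive term of order $\|R'\|_{L^2}/\sigma_N$; this is again $o(1)$ under the stated hypotheses. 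Finally, when $T$ comes from the block constructions of Theorem \ref{Non Uniform Third Moment Application} or Corollary \ref{Non uniform third moment corollary}, the same $\sigma_N$ (the standard deviation of the original sum, not of the rescaled $Y_j$) must be used consistently. I would check this bookkeeping before combining the bounds.
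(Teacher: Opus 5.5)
Your argument is essentially the paper's proof. The paper also sandwiches $\bbP(T_N\le\sigma_N(y-u))$, $u=R'_N/\sigma_N$, between $F_N(y\mp\varepsilon_N)$ up to $\bbP(|u|>\varepsilon_N)$, bounds that probability by Chebyshev, uses the Lipschitz bound for $\Phi$, and balances $\varepsilon_N$ against the Chebyshev term to get the exponent $1/3$. It handles the normalisation by $\sigma'_N$ as a separate rescaling term, but more crudely: it detours through $|F_N(y-v)-F_N(y)|$ and truncates at $M_N$, which gives $3r_N+C|\sigma'_N/\sigma_N-1|^{1-\delta}$, whereas your direct comparison with $\Phi$ gives $r_N+C|\sigma'_N/\sigma_N-1|$.

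Your use of $\bbE[R'^2]$ rather than the paper's $\text{Var}(R'_N)$ is actually necessary. The paper's Chebyshev step silently swaps the second moment for the variance, and its stated bound fails for $R_N=0$, $R'_N=c\,\sigma_N$ with $c\neq 0$: there $\text{Var}(R'_N)=0$ and $\sigma'_N=\sigma_N$, yet $F'_N(x)\to\Phi(x-c)$. The one slip to fix on your side is the rescaling cost: keep it as $C|\sigma'_N/\sigma_N-1|\le C(\|R\|_{L^2}+\|R'\|_{L^2})/\sigma_N$ rather than $C\|R'\|_{L^2}/\sigma_N$. The $\|R\|_{L^2}$ part is not controlled by the hypotheses on $R'_N$ unless $R=0$, as in Theorem \ref{Non Uniform Third Moment Application}.
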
 \textit{Let $T_N = S_N + R_N = \sum_{j=1}^N g_j(X_j) + R_N$, and let $R'_N$ be a remainder term satisfying $\text{Var}\,(R'_N) = o(\sigma_N^2)$. Assume that $$\sup_{x\in\mathbb{R}}|\bbP(T< \sigma_N x) - \Phi(x)| \leq r_N,$$ where $r_N$ is a Berry-Esseen rate that has already been established (for example, by Theorem \ref{Berry-Esseen Bounds for Statistics of Weakly Dependent Samples Proof}). Define $T'_N := T_N + R'_N$, $(\sigma'_N)^2 := \text{Var}\,(T'_N)$, and $F'_N(x):= \bbP(T'_N/\sigma'_N\leq x)$. Then, for any $\delta \in (0,1)$, there exists a sufficiently large $C>0$ independent of $N$ such that,  \begin{align}
    \sup_{x\in\mathbb{R}}|F_N'(x) - \Phi(x)| \leq 3r_N +C\left|\frac{\sigma'_N}{\sigma_N} - 1\right|^{1-\delta} +   C\left(\frac{\text{Var}\,(R'_N)}{\sigma_N^2}\right)^{1/3} =o(1).\label{Perturbed Berry Esseen Bounds}
\end{align}} 

\textbf{Proof}: We begin with the calculation \begin{align}
    \text{Var}\left(T'_N\right) = \text{Var}\left(\sum_{j=1}^N g_j(X_j)\right) + \text{Var}\left(R'_N\right)+2\,\text{Cov}\left(\sum_{j=1}^N g_j(X_j), R'_N\right).
\end{align} By the H\"older inequality,
\begin{align}
    \text{Cov}\left(\sum_{j=1}^N g_j(X_j), R'_N\right)\leq \sqrt{\text{Var}\left(\sum_{j=1}^N g_j(X_j)\right) \,\text{Var}\left(R'_N\right)} = o(\sigma_N^2).
\end{align} Furthermore  we have shown that $\sigma_N'/\sigma_N \to 1$, since we showed that $(\sigma_N')^2 = \sigma_N^2(1+o(1))$. Thus, we have ensured the the variance of $T'_N$ is of order $\sigma_N^2$. 

 However, we are now considering a system ``perturbed'' by $R'_N$.  Denote $F'_N (x) = \bbP(T'_N/\sigma'_N\leq x)$ and $F_N(x) = \bbP(T_N/\sigma_N \leq x)$. Note that $$F_N'(x)=\bbP\left(\frac{T_N'}{\sigma'_N}\leq x\right) = \bbP\left(\frac{T_N + R'_N}{\sigma_N}\leq \frac{\sigma'_N}{\sigma_N}x - \frac{R'_N}{\sigma_N}\right)= \bbP\left(\frac{T_N}{\sigma_N}\leq \frac{\sigma_N'}{\sigma_N}x-\frac{R'_N}{\sigma_N}\right) = F_N\left(\frac{\sigma_N'}{\sigma_N}x-\frac{R'_N}{\sigma_N}\right).$$

Thus, we may write 
\begin{align}
    \left|F'_N(x)- \Phi(x)\right| \leq \left|\Phi(\frac{\sigma'_N}{\sigma_N}x) - \Phi(x)\right|+ \left|F_N(\frac{\sigma'_N}{\sigma_N}x) - \Phi(\frac{\sigma'_N}{\sigma_N}x)\right| + \left|F_N(\frac{\sigma'_N}{\sigma_N}x) - F_N(\frac{\sigma'_N}{\sigma_N}x - \frac{R'_N}{\sigma_N})\right|.\label{CDF decomposition}
\end{align} 
Now, we must bound each term of \ref{CDF decomposition}. We already know that $|F_N(\frac{\sigma'_N}{\sigma_N}x) - \Phi(\frac{\sigma'_N}{\sigma_N}x)|\leq r_N$.

Next, we bound $\sup_{x\in\mathbb{R}}|\Phi(\frac{\sigma'_N}{\sigma_N}x) - \Phi(x)|$. To do so, we let $M_N$ be a sequence of positive numbers such that $M_N \to \infty$, and consider separately the cases $|x|\leq M_N$ and $|x|>M_N$. For $|x|\leq M_N$, we have $$\left|\Phi(\frac{\sigma'_N}{\sigma_N}x) - \Phi(x)\right| \leq C \left|\frac{\sigma'_N}{\sigma_N}x - x\right|= C \,|x| \left|\frac{\sigma'_N}{\sigma_N} - 1\right| \leq C M_N \left|\frac{\sigma'_N}{\sigma_N} - 1\right|.$$ Choosing $M_n = |\frac{\sigma'_N}{\sigma_N} - 1|^{-\delta}$ for some $\delta \in (0,1)$, we have that $$|\Phi(\frac{\sigma'_N}{\sigma_N}x) - \Phi(x)|\leq C\,|\frac{\sigma'_N}{\sigma_N} - 1|^{1-\delta} = o(1).$$ Now consider the case $|x|>M_N$. Denote by $Z$ a standard normal random variable. If $x>M_N$, then $|\Phi(\frac{\sigma'_N}{\sigma_N}x) - \Phi(x)| \leq \bbP(Z>M_N) \to 0$ exponentially fast in $M_N$ as $N\to \infty$ by Mill's ratio and because $M_N\to \infty$ as $N\to \infty$. The case $x<-M_N$ is the same. We have thus shown that $\sup_{x\in\mathbb{R}}|\Phi(\frac{\sigma'_N}{\sigma_N}x) - \Phi(x)| \leq C\,|\frac{\sigma'_N}{\sigma_N} - 1|^{1-\delta}  = o(1)$.

Finally, we bound $\sup_{x\in \mathbb{R}}|F_N(\frac{\sigma'_N}{\sigma_N}x) - F_N(\frac{\sigma'_N}{\sigma_N}x - \frac{R'_N}{\sigma_N})|$. We denote $y = \frac{\sigma'_N}{\sigma_N}x$ and $u = \frac{R_N}{\sigma_N}$.

We now have the decomposition, valid for each $\varepsilon_N>0$ (we will later determine the appropriate choice of $\varepsilon_N$), \begin{align}
    \bbP\left(\frac{T_N}{\sigma_N} \leq y-u\right) &= \mathbb{E}\left[\mathbbm{1}_{\{|u|\leq \varepsilon_N\}} \mathbbm{1}_{\{\frac{T_N}{\sigma_N} \leq y-u  \}}\right] + \mathbb{E}\left[\mathbbm{1}_{\{|u| > \varepsilon_N\}} \mathbbm{1}_{\{\frac{T_N}{\sigma_N} \leq y-u  \}}\right].
\end{align} We bound both terms separately.

We first have that $$\mathbb{E}\left[\mathbbm{1}_{\{|u| > \varepsilon_N\}} \mathbbm{1}_{\{\frac{T_N}{\sigma_N} \leq y-u  \}}\right]\leq \bbP\left(|u| > \varepsilon_N\right) = \bbP\left(\frac{|R_N|}{\sigma_N} > \varepsilon_N\right) \leq \frac{\mathbb{E}\,[|R_N|^2]}{\sigma_N^2 \varepsilon_N^2},$$ where the last inequality follows from Chebyshev's inequality.

Next, we bound $\mathbb{E}[\mathbbm{1}_{\{|u| \leq \varepsilon_N\}} \mathbbm{1}_{\{\frac{T_N}{\sigma_N} \leq y-u  \}}]$. Since $|u|\leq \varepsilon_N$, we know that $y-u\in [y-\varepsilon_N, y+ \varepsilon_N]$. Thus, using that $\mathbbm{1}_{\{\frac{T_N}{\sigma_N} \leq y-u \} }$ is monotone in $u$,
$$
F_N(y-\epsilon_N)-\mathbb P(|u|>\epsilon_N)\leq \mathbb{E}\left[\mathbbm{1}_{\{|u| \leq \varepsilon_N\}} \mathbbm{1}_{\{\frac{T_N}{\sigma_N} \leq y-u  \}}\right]\leq F_N(y+\epsilon_N).
$$

We have thus shown that $\bbP(\frac{T_N}{\sigma_N}\leq y-u)\leq \sup_{|v| \leq \varepsilon_N}F_N(y-v)+ \bbP(|u| > \varepsilon_N)$. Subtracting $F_N(y)$, taking the supremum over $y$, and then applying the triangle inequality, we have $$\sup_{y\in \mathbb{R}} \left|\bbP(\frac{T_N}{\sigma_N} \leq y-u) - F_N(y)\right| \leq \sup_{y\in \mathbb{R}} \left|\sup_{|v|\leq \varepsilon_N} F_N(y-v) - F_N(y)\right| + 2\,\bbP\left(|u|>\varepsilon_N\right).$$ By the triangle inequality, for each $v$ we can write 
$$
\left|F_N (y-v) - F_N(y)\right| \leq |F_N(y) - \Phi(y)| + |\Phi(y) - \Phi(y-v)| + |F_N(y-v) - \Phi(y-v)|.$$ 
The terms $|F_N(y) - \Phi(y)|$ and $|F_N(y-v) - \Phi(y-v)|$ are both bounded by $r_N$. By the normal density, $|\Phi(y) - \Phi(y-v)| \leq \frac{1}{\sqrt{2\pi}} \varepsilon_N$. Thus, we have shown that \begin{align}
    \sup_{y\in \mathbb{R}}\left|\bbP(\frac{T_N}{\sigma_N} \leq y-u) - F_N(y)\right|&\leq 2r_N + \frac{1}{\sqrt{2\pi}}\varepsilon_N + 2\,\bbP\left(|u|>\varepsilon_N\right)\\
    &\leq 2r_N + \frac{1}{\sqrt{2\pi}}\varepsilon_N + \frac{2\,\mathbb{E}\,[|R_N|^2]}{\sigma_N^2\varepsilon_N^2}.\label{perturbation bound}
\end{align}

We wish to balance the terms in  Inequality \ref{perturbation bound} with $\varepsilon_N$. That is, we want $\frac{1}{\sqrt{2\pi}}\varepsilon_N \asymp \frac{\mathbb{E}[|R_N|^2]}{\sigma_N^2\varepsilon_N^2}$. Solving this for $\varepsilon_N$, this is equivalent to $$\varepsilon_N\asymp (\frac{\text{Var}(R'_N)}{\sigma_N^2})^{1/3}.$$

With this choice of $\varepsilon_N$, we have proven the bound $\sup_{y\in\mathbb{R}} |F_N(y-u) - F_N(y)| \leq 2r_N + O((\frac{\text{Var}(R'_N)}{\sigma_N^2})^{1/3})$. This gives a final bound of 

\begin{align*}
    \sup_{x\in\mathbb{R}}|F_N'(x) - \Phi(x)| &\leq \left[O(|\frac{\sigma'_N}{\sigma_N} - 1|^{1-\delta})\right] + [r_N] + \left[2r_N + O((\frac{\text{Var}(R'_N)}{\sigma_N^2})^{1/3})\right] \\
    &= 3r_N +O\left(|\frac{\sigma'_N}{\sigma_N} - 1|^{1-\delta}\right) +   O\left((\frac{\text{Var}(R'_N)}{\sigma_N^2})^{1/3}\right) =o(1).
\end{align*} By our assumptions that $\text{Var}(R_N') = o(\sigma_N^2)$ both $O((\frac{\text{Var}(R'_N)}{\sigma_N^2})^{1/3})]$ and $O(|\frac{\sigma'_N}{\sigma_N} - 1|^{1-\delta})$ are $o(1)$. 
$\QEDB$

Note that $O((\frac{\text{Var}(R'_N)}{\sigma_N^2})^{1/3})$ and $O(|\frac{\sigma'_N}{\sigma_N} - 1|^{1-\delta})$  were left in the final Berry-Esseen bound \ref{Perturbed Berry Esseen Bounds}. Without additional information about $R'_N$, however, the best rate we are guaranteed will be $o(1)$. However, these terms account for the case that we have a stronger rate of convergence than $\text{Var}(R'_N) = o(1)$. If, say, $\text{Var} (R'_N) \asymp O(N^{-\xi})$ for $\xi \in (0,1)$, then $O(\frac{\text{Var}(R'_N)}{\sigma_N^2})^{1/3} = O(N^{-\xi/3})$ and $O(|\frac{\sigma'_N}{\sigma_N} - 1|^{1-\delta}) = O(N^{-\xi(1-\delta)/2})$ assuming $\sigma_N^2\asymp N$. If we also assumed higher moment assumptions for $R'_N$, then we could modify the usage of Chebyschev's inequality in the proof to obtain sharper bounds.

This application of Theorem \ref{Berry-Esseen Bounds for Statistics of Weakly Dependent Samples Proof} is useful, as it allows for an extra added error term once we have already obtained Berry-Esseen rates for a given process.

This theorem is also useful in non-linear settings where a block decomposition is used along with gaps inserted between the blocks. It provides a simple avenue to bound the contribution of the gaps.

\subsection{U-Statistics} The application in this section is proved in Bentkus et al. \cite{bentkus1997berry} for \textit{stationary} U-statistics. The following theorem is an analogue of Theorem 2.5 in \cite{bentkus1997berry} for non-stationary sequences with non-linear variance growth. Our proof is a modified version of the proof in \cite{bentkus1997berry}, which proved this theorem for stationary sequences and exponential mixing rates. Under stationarity and exponential mixing, we would obtain the same rates for non-stationary sequences with linear variance growth directly from their proof and the exponential mixing analogue of Theorem \ref{Berry-Esseen Bounds for Statistics of Weakly Dependent Samples Proof}.

Let $\{X_i\}_{i\in \mathbb{Z}^{+}}$ be a non-stationary $\phi$-mixing sequence. For simplicity, we consider U-statistics of second order only (it may be possible to consider higher order U-statistics by modifying our proof, but the proof would become significantly more computationally involved).  Define the U-statistic, $$U' := S'+R' \text{, where } S' = \sum_{j=1}^{N}g_j(X_j) \text{ and } R' = \frac{1}{N}\sum_{1\leq i < j \leq N} \psi_{i,j}\left(X_i, X_j\right),$$  where $g_j$ and $\psi = \psi_{i,j}$ may depend on $N$\footnote{U-statistics (``U" stands for unbiased) are a class of nonparametric statistics (that is, statistics that do not make assumptions about the sampling distribution) that can be represented in the following form, assuming that the function $\psi$ is symmetric (under permutations of indices): $$f_N(x_1, \cdots, x_N) = \frac{1}{{N \choose r}}\sum_{1\leq i_1<\cdots< i_r\leq N}\psi\left(x_{i_1}, \cdots, x_{i_r}\right).$$  The function $\psi$ is called the kernel of the U-statistic, and can be chosen arbitrarily (although most limit theorems make some assumptions about $\psi$). We can observe that this aligns with our construction, for $r=2$. There is a more general form of U-statistics in which we do not assume that $\psi$ is symmetric, but we do not use this more general form. 

 U-statistics were introduced by Wassily Hoeffding in 1948,
 who established their theoretical foundation in their seminal paper \cite{hoeffding1948central}. A few classical examples of symmetric U-statistics are the unbiased estimators of the population variance and mean, the Wilcoxon--Mann--Whitney statistic for the median, the Shapiro--Wilk test for testing for normality, and Hoeffding's D statistic for testing the dependence of two random variables. 
 There are many important uses for U-statistics in applied statistics, including finding minimum-variance unbiased estimators for various classes of parameters. See Lee \cite{lee2019u} for a detailed introduction to U-statistics.
 }.

We make the following assumptions:

\begin{enumerate}
    \item (Mixing) There exists $c_{\text{mix}} < \infty$ such that $\phi(h)\leq c_{\text{mix}}h^{-p}$ for a large enough $p$.
    \item There exists $\rho>0$ such that $\sup_{N\in \mathbb{N}}\rho_3(N) \leq \rho<+\infty$.
    \item (Symmetry) $\psi(x,y) = \psi(y,x)$.
    \item (Degeneracy)  $\mathbb{E}\,[\psi_{i,j}(x, X_j)] = 0$ for all $x\in \mathcal{X}$.
    \item (Variance diverges) $\text{Var}\,(\sum_{j=1}^N g_j(X_j))\to \infty$.
    \item (Sub-linear variance growth) There exists $v_{\text{min}},v_{\text{max}}>0$ such that for all $N$ and every subset $\mathcal{I} \subseteq \{1,2, \cdots, N\}$, $$v_{\text{min}} \,\text{Card}\left(\mathcal{I}\right)^{\zeta} \leq \text{Var}\left(\sum_{j\in \mathcal{I}} g_j(X_j)\right)\le v_{\text{max}} \,\text{Card}\left(\mathcal{I}\right).$$
    \item $$\sup_{N\in \mathbb{Z}^{+}} \sup_{1\leq i < j \leq N}\left(\mathbb{E}\left[\psi_{i,j}^2(X_i, X_j)\right] + \mathbb{E}\left[\psi_{i,j}^2(\hat{X}_i, X_j)\right]\right)\leq L < +\infty,$$ where $\hat{X}_j$ is a copy of $X_j$ that is independent of $(X_n)$.
\end{enumerate}

We begin by using the same block decomposition as in Theorem \ref{Non-uniform third moment subsection}, forming blocks of variance $\sigma_N^{\alpha}$, $Y_j = \frac{1}{\sigma_N^{\alpha/2}}\sum_{i\in B_j} g(X_j)$, and gaps $G_j$ of variance $\sigma_N^{\beta}$, $0<\beta < \alpha < 2$, where the optimal choice of $\beta$ depends on $p$. This lets us define $$U=S+R, \text{ where } S = \sum_{j=1}^{k_N} Y_j \text{ and } R= \frac{1}{N}\sum_{1\leq i < j\leq N} \psi_{i,j}(X_i, X_j).$$

Note that since $\sigma_N^2 \ll N$, we have $\text{Var}(\sum_{j\in \mathcal{I}} g_j(X_j))\leq v_{\text{max}}\, \text{Card}(\mathcal{I})$. Furthermore, for each block $B_r$, $\text{Var}(\sum_{j\in B_r}g_j(X_j)) \geq \sigma_N^{\alpha}$ implies that $\text{Card}(B_r) \geq \sigma_N^{\alpha}/v_{\text{max}}$. Similarly, $\text{Card}(G_r)\geq \sigma_N^\beta/v_{\text{max}}$, which ensures approximate independence between the blocks via mixing.

\begin{theorem}[Berry-Esseen Bounds for U-Statistics]\label{U-statistics theorem}
    Assume $\psi_{i,j}$, $S$, and $R$ are as defined and satisfy the above assumptions. Then, there exists $C_{\alpha,\beta}>0$ independent of $N$ such that for all $\delta>0$ and $p$ large enough, $$\sup_{x\in \mathbb{R}}\left|P(U'<x\,\sigma_N) - \Phi(x)\right|\leq  C_{\alpha,\beta,\delta}\, \sigma_N^{2\beta/3 - 5\alpha/6 + 2/3 + \varepsilon_p} + C_{\alpha,\beta,\delta}\,\sigma_N^{-1-\delta p+2/\zeta}+C_{\alpha,\beta,\delta}\sigma_N^{-1+\delta}$$
\end{theorem}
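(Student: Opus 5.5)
The plan is to reduce the U-statistic to the setting of Corollary \ref{Non uniform third moment corollary}, and then to absorb the quadratic part $R'$ using Theorem \ref{Remainder Theorem}.

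\textbf{Step 1: the linear part.} Write $U' = S' + R'$. First I will treat the linear part $S'$ through the block/gap decomposition described above. The blocks $Y_j$ have variance $\asymp \sigma_N^{\alpha}$ (after normalisation, $\asymp 1$). The gaps $G_j$ have variance $\asymp\sigma_N^{\beta}$. By Assumption 2 and Rosenthal's inequality for $\phi$-mixing sequences (as in Lemma \ref{Non-uniform third moment lemma}), the $Y_j$ have uniformly bounded third moments. Because the gap between consecutive blocks has cardinality at least $\sigma_N^\beta/v_{\max}$ and $\phi(h)\le c_{\text{mix}}h^{-p}$, the inter-block covariances are $O(\sigma_N^{-\beta p})$. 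So the block variance is linear in $k_N\asymp\sigma_N^{2-\alpha}$ without needing Assumption 7. The gaps form a remainder, and applying Corollary \ref{Non uniform third moment corollary} verbatim gives
\[
\sup_x|\bbP(S'<\sigma_N x)-\Phi(x)|\le C\sigma_N^{2\beta/3-5\alpha/6+2/3+\varepsilon_p}+C\sigma_N^{-1+\varepsilon_p}.
\]
This is the rate $r_N$ that will be fed into Theorem \ref{Remainder Theorem}.

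\textbf{Step 2: bounding the second moment of $R'$ (the main obstacle).} Next I will bound $\bbE[(R')^2]$. Expand
\[
N^2\,\bbE[(R')^2]=\sum_{i<j}\sum_{k<l}\bbE[\psi_{ij}(X_i,X_j)\psi_{kl}(X_k,X_l)].
\]
Take a quadruple whose smallest index is separated from the other three by a gap $h$. Apply Lemma \ref{Decoupling} to replace that variable with an independent copy; degeneracy (Assumption 4) then kills the decoupled term. The error is controlled by $\phi(h)$ times the $L^2$ bound $L$ of Assumption 7. Lemma \ref{Decoupling} is stated for bounded $\nu$, so I will first truncate $\psi$ at a level $\sigma_N^{\delta}$, or use the $\phi$-mixing covariance inequality with $\phi^{1/2}$ and square-integrability.

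Summing over configurations, the gap structure leaves $O(N^2)$ essentially diagonal terms ($\{i,j\}=\{k,l\}$ or nearly so). The off-diagonal terms contribute at most $\sum_h N^{3}h^{2}\phi(h)^{1/2}$-type sums, which are $O(N^{2})$ for $p$ large, plus polynomially small tails. This gives $\bbE[(R')^2]\le C$ uniformly in $N$.

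The lower bound $\text{Var}(S')\ge v_{\min}N^{\zeta}$ gives $N\le C\sigma_N^{2/\zeta}$. This is how the factor $\sigma_N^{2/\zeta}$ enters: the tail and truncation errors, of the form $N\phi(\sigma_N^{\delta})$ and similar, become $\sigma_N^{-1-\delta p+2/\zeta}$. I expect the careful counting of index configurations, together with handling truncation within the decoupling lemma, to be the hardest part. I would first try the truncation route with the split $h\le\sigma_N^{\delta}$ versus $h>\sigma_N^{\delta}$.

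\textbf{Step 3: assembling the bound.} Finally, apply Theorem \ref{Remainder Theorem} with $T=S'$ and the additional remainder $R'$. Since $\text{Var}(R')=O(1)=o(\sigma_N^2)$, we get $|\sigma'_N/\sigma_N-1|=O(\sigma_N^{-1})$. The perturbation terms are then $O(\sigma_N^{-(1-\delta)})$ and $(\text{Var}(R')/\sigma_N^2)^{1/3}$.

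To reach $\sigma_N^{-1+\delta}$ rather than the cruder $\sigma_N^{-2/3}$, I would replace the Chebyshev step in the proof of Theorem \ref{Remainder Theorem} by a direct smoothing argument. Bound $\bbP(|R'|>\sigma_N^{\delta})$ via the second moment and take $\varepsilon_N=\sigma_N^{-1+\delta}$. Normalising by $\sigma_N$ instead of $\sigma_N'$ costs only $O(\sigma_N^{-1})$ by the Gaussian comparison argument in that proof. Collecting the three contributions yields the stated bound.
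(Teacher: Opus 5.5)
Your route is genuinely different from the paper's, and its last step does not give the stated rate.

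\textbf{How the paper proceeds.} The paper never needs a second moment of the quadratic part. It feeds $R=\frac1N\sum_{i<j}\psi_{i,j}(X_i,X_j)$, together with the gap remainder, directly into Theorem~\ref{Berry-Esseen Bounds for Statistics of Weakly Dependent Samples Proof}. That bound depends on $R$ only \emph{linearly}, through $\mathbb{E}[|R|]$ and $\gamma$. Splitting at distance $m=\sigma_N^{\delta}$, it gets $\mathbb{E}[|R_{\le m}|]\le C\sigma_N^{\delta}$ termwise by Cauchy--Schwarz, with no cancellation used. It gets $\mathbb{E}[|R_{>m}|]\le CN\phi(\sigma_N^{\delta})\le C\sigma_N^{2/\zeta-p\delta}$ by decoupling a single kernel and using degeneracy. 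Divided by $\sigma_N$, these are exactly the last two terms of the statement. The quantity $\gamma$ is then handled with Lemma~\ref{Decoupling}, and the gaps as in Corollary~\ref{Non uniform third moment corollary}.

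\textbf{Where your Step 3 fails.} From $\mathbb{E}[(R')^2]\le C$, Chebyshev only gives $\mathbb{P}(|R'|>\sigma_N^{\delta})\le C\sigma_N^{-2\delta}$. With $\varepsilon_N=\sigma_N^{-1+\delta}$ your bound is therefore $r_N+C\sigma_N^{-1+\delta}+C\sigma_N^{-2\delta}$. The term $\sigma_N^{-2\delta}$ dominates $\sigma_N^{-1+\delta}$ whenever $\delta<1/3$. Optimising the threshold just returns the $\sigma_N^{-2/3}$ of Theorem~\ref{Remainder Theorem}.

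This loss is intrinsic to second-moment information. Take $R'$ of size $\sigma_N^{1/3}$ on a suitably placed event of probability $\sigma_N^{-2/3}$. It has bounded second moment, yet it can move the Kolmogorov distance of $S'/\sigma_N$ by order $\sigma_N^{-2/3}$.

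So your argument proves the theorem only for $\delta\ge 1/3$. To go further you would need one of two things. Either $\sup_N\mathbb{E}|R'|^q<\infty$ with $q\ge(1-\delta)/\delta$, which is unavailable when only $\sup\mathbb{E}[\psi_{i,j}^2]\le L$ is assumed. Or a mechanism in which $R$ enters linearly, which is exactly what Theorem~\ref{Berry-Esseen Bounds for Statistics of Weakly Dependent Samples Proof} supplies.

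\textbf{Step 2 is also not established as written.} The off-diagonal count ``$\sum_h N^3h^2\phi(h)^{1/2}$, which is $O(N^2)$'' is off by a factor of $N$, since $N^3$ times a convergent series is $O(N^3)$. Reaching $O(N^2)$ needs a case analysis. Take $t_1<t_2<t_3<t_4$ whose largest gap is the middle one, paired as $\{t_1,t_2\},\{t_3,t_4\}$. Decoupling leaves $\mathbb{E}[\psi_{t_1,t_2}]\,\mathbb{E}[\psi_{t_3,t_4}]$, which does not vanish under dependence. It must itself be bounded by a product of two mixing coefficients before summing.

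Decoupling the product $\psi_{i,j}\psi_{k,l}$ also requires integrability of the product beyond $L^1$, which the $L^2$ bound on $\psi$ does not give. Truncating at level $M=\sigma_N^{\delta}$ multiplies every decoupling error by $M^2$ and breaks degeneracy. So it does not produce a uniform $O(1)$ bound. The paper's route sidesteps all of this because it only uses first moments of $R$.
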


\textbf{Proof}: We bound $\mathbb E[R]$ and $\gamma$, and then apply Theorem \ref{Berry-Esseen Bounds for Statistics of Weakly Dependent Samples Proof} to obtain rates of convergence.

\textbf{Bounding $\mathbb E[R]$}: We use a distance-based decomposition $R = R_{\le m} + R_{>m}$, where $m:=\sigma_N^{\delta}$ for a small $\delta>0$, $R_{\leq m} := \frac{1}{N}\sum_{i<j,\, |i-j|\leq m} \psi_{i,j}(X_i, X_j)$ and $R_{> m} := \frac{1}{N}\sum_{i<j,\, |i-j|> m} \psi_{i,j}(X_i, X_j)$.

\textbf{Bounding $\mathbb E[R_{\leq m}]$}: By Jensen's inequality and because there are $Nm$ summands, we have \begin{align}
    \mathbb E[|R_{\leq m}|] &\leq \frac{1}{N} \sum_{i<j, \, |i-j|\le m} \mathbb E [\psi^2_{i,j}]^{1/2}\\
    &\leq C \frac{1}{N } \cdot Nm = Cm = C\sigma_N^{\delta}.
\end{align}

\textbf{Bounding $\mathbb E[R_{> m}]$}: Since there are at most $N^2$ indices, we have 
\begin{align}
    \mathbb E \left[|R_{>m}|\right] &= \frac{1}{N}\sum_{i<j,\, |i-j|> m} \psi(X_i, X_j)\\
    &\leq CN \phi(m)\\
    &\leq C\sigma_N^{2/\zeta}\,\sigma_N^{-p\delta}\label{Distant R bounds}.
\end{align} Inequality \ref{Distant R bounds} follows from $N^{\zeta }\ll\sigma_N^2$ and the mixing rates. This is small for $p$ large enough.

Combining these bounds, for $p$ large enough, we have $\mathbb E[|R|] \leq C(\sigma_N^{2/\zeta}\,\sigma_N^{-p\delta}+\sigma_N^\delta)$.

\textbf{Bounding $\gamma$}: As shown in \cite{bentkus1997berry}, $$R_{j,k} = N^{-1}\sum_{\{ i,j \}\subseteq [\bigcup_{\ell=1}^{k_N} B_\ell]\setminus[B_1 \cup B_2 \cup \cdots \cup B_k]} \psi_{i,j}(X_i, X_j).$$  This gives,
$$R_{j,k} - R_{j,k-1} = \frac{1}{N}\sum_{i\in B_k}
\sum_{\ell\notin \bigcup_{r=j}^{k-1} B_s}
\psi_{i,j}(X_i,X_\ell).$$

 We also may assume that $|\psi(X_i,X_\ell)|\le \sigma_N^{6}$, as in \cite{bentkus1997berry}.

Recall that H\"older's inequality applied to a $L^2$ random variable $X$ gives $\mathbb E[|X|^{3/2}]\leq (\mathbb E[X^2])^{3/4}$. Furthermore, for each $j,k$ we have \begin{align}
    \mathbb E \left[|R_{j,k}-R_{j,k-1}|^{3/2}\right]
&=
N^{-3/2}\,\mathbb E \mathlarger\,[|\sum_{i\in B_k}
\sum_{\ell\notin \bigcup_{r=j}^{k-1} B_s}
\psi_{k,\ell}(X_k,X_\ell)|^{3/2}\mathlarger]\\
&\leq N^{-3/2}\left(\sum_{\ell_1,\ell_2\notin[j,k]}
\left|
\mathbb E\!\left[
\psi_{k,\ell_1}(X_k,X_{\ell_1})\,\psi_{k,\ell_2}(X_k,X_{\ell_2})
\right]
\right|\right)^{3/4}\\
&\leq CN^{-3/2} (N^2 \sigma_N^{6} \,\phi(\sigma_N^{\beta}))^{3/4}\label{conditioning on Xj inequality}\\
&\leq C\sigma_N^{9/2}\sigma_N^{-3p\beta/4}.%\\
%&\leq C \sigma_N^{-3/2}\sigma_N^{9/2}\sigma_N^{-3p\beta/4}.\label{Applying sublinear variance inequality}
\end{align}
Inequality \ref{conditioning on Xj inequality} follows from the degeneracy assumption and by conditioning $\psi_{k,\ell_1}(X_k,X_{\ell_1})\,\psi_{k,\ell_2}(X_k,X_{\ell_2})$ on $X_k$, and then applying Lemma \ref{Decoupling}. %Inequality \ref{Applying sublinear variance inequality} follows from $\sigma_N^2 \ll N$. 
Furthermore, this gives $$\gamma \leq C\left(\sigma_N^{9/2-3p\beta/4}\right)^{2/3},$$ which can be made arbitrarily small for $p$ large enough (or balanced with the other terms through the choice of $\beta$).

Finally, we consider the remainder arising from the gap terms. We bound these terms the same way as in Corollary \ref{Non uniform third moment corollary}, yielding the same rates.

Combining these bounds and applying Theorem \ref{Berry-Esseen Bounds for Statistics of Weakly Dependent Samples Proof} completes the proof.

\QEDB

\begin{remark}
    The phrase ``$p$ large enough" does a lot of heavy lifting in this theorem, as the value of $p$ affects the admissible choices of $\beta$. The smaller the choice of $\beta$, the larger the required value of $p$ (larger $\beta$ means weaker dependence between blocks). On the other hand, $\alpha$ can be chosen arbitrarily close to 2 with no cost of a larger required $p$. In fact, the theorem  could fail to provide non-trivial Berry--Esseen rates if $p$ is too small. If, however, the sequence has exponential or sub-exponential $\phi$-mixing rates, then we can choose $\beta$ arbitrarily small, and still obtain near-optimal rates. We also did not track the exact choice of $\beta$ in Theorems \ref{Non Uniform Third Moment Application}, \ref{U-statistics theorem}, and \ref{Berry-Esseen Bounds for Functions of Sample Means Theorem}. If the readers wish to see what rates are guaranteed given a known $p$, they would need to trace back the proofs to optimize the convergence rates. We would need to do the same for the choice of $\beta$ in Theorems \ref{Non Uniform Third Moment Application} and \ref{Berry-Esseen Bounds for Functions of Sample Means Theorem}. Note that the optimal choices of $\beta$ may differ for Theorems \ref{Non Uniform Third Moment Application}, \ref{U-statistics theorem}, and \ref{Berry-Esseen Bounds for Functions of Sample Means Theorem}. 

    We used a fairly crude bound when bounding $\gamma$ in Corollary \ref{Non uniform third moment corollary}. If we were to consider the closest blocks (i.e., for each $i$, considering only interactions between $B_i$ with $B_{i-1}$ and $B_{i+1}$) separate from the rest of the blocks, we might be able to decrease the required choice of $\beta$. 
\end{remark}

\begin{remark}
    It seems that we could also assume that $N^{\zeta_{\text{min}}}\ll\sigma_N^2 \ll N^{\zeta_{\text{max}}}$ for $\zeta_{\text{max}} >1$. The main way the upper bound is used is to ensure that the blocks are large enough, which this weaker condition would still ensure. However, this would require a larger choice of $p$.
\end{remark}

\begin{remark}
    If the sequence we are considering has exponential mixing rates, then we can modify Theorem \ref{U-statistics theorem} and apply \eqref{Main Theorem for exponential rates} (the generalization of \cite[Theorem 1.1]{bentkus1997berry}) to obtain faster Berry-Esseen rates. In this case, we could let the gap size $m = \sigma_N^{k}$ for $k$ large enough. Then, for gaps chosen of variance $\log (\sigma_N)$ and gaps of variance $\log (\sigma_N)^{1+k}$ for $k>0$, there exists an $r>0$ and $C>0$ such that for all $N\in \mathbb{Z}^+$,
 \begin{align*} \sup_{x\in \mathbb{R}}|P(T<\sigma_N x ) - \Phi(x)|\leq C\sigma_N^{-1}\log(\sigma_N)^{r}. \end{align*}
We could also obtain similar rates for Theorems \ref{Non Uniform Third Moment Application} and \ref{Berry-Esseen Bounds for Functions of Sample Means Theorem}.
 
\end{remark}

\subsection{Functions of Sample Means}\label{Functions of sample means section} This is a generalization of Theorem 2.1 of \cite{bentkus1997berry} to non-stationary statistics with non-linear variance. In this section, assume that $X_i$ take values in a real, separable Banach space $\mathcal{B}$, and define $\bar{X} := N^{-1}\sum_{j=1}^N X_j$. Let $f^{(j)}(x) h^j$ denote the $j$-th Fr\'{e}chet derivative of $f$ at point $x$ in direction $h$. Define $g(x) := H'(0)x$, and suppose suppose $\mathbb{E}[g(X_j)] = 0$ for all $j, N$ (note that $g$ depends on $N$ but not on $j$). Let $$R=N(H(\bar{X})-H(0)-H'(0) \bar{X}),$$ and $$T = \sum_{j=1}^N g(X_j) + R.$$ Also, let $H: \mathcal{B}\to \mathbb{R}$ be a Fr\'{e}chet differentiable function, and denote $M_s:= \sum_{j=1}^s \sup_{x\in B} \|H^{(j)}(x)\|$.

We make the following assumptions:
\begin{enumerate}
    \item (Mixing) $\phi(j)\leq Kj^{-p}$ for all $j\in \mathbb{Z}^+$.
    \item (Smoothness) Either $M_3 < \infty$ \textit{or} $\mathcal{B}$ is both a type 2 Banach space and $M_2 < \infty$.
    \item There exists $\rho>0$ such that $\sup_{N\in \mathbb{N}}\rho_3(N) \leq \rho<+\infty$.
    \item (Sub-linear variance growth) There exists $v_{\text{min}},v_{\text{max}}>0$ such that for all $N$ and every subset $\mathcal{I} \subseteq \{1,2, \cdots, N\}$, $$v_{\text{min}}\, \text{Card}\left(\mathcal{I}\right)^{\zeta} \leq \text{Var}\left(\sum_{j\in \mathcal{I}} g(X_j)\right)\le v_{\text{max}}\, \text{Card}\left(\mathcal{I}\right).$$
\end{enumerate}

\begin{theorem}[Berry-Esseen Bounds for Functions of Sample Means]\label{Berry-Esseen Bounds for Functions of Sample Means Theorem}
    Make the above assumptions. Then, there exists $C_{\alpha, \beta}>0$ independent of $N$ such that for $p$ large enough, \begin{align}
        \sup_x |\bbP(T<x) - \Phi(x)| \leq  C_{\alpha,\beta,\delta}\, \sigma_N^{2\beta/3 - 5\alpha/6 + 2/3 + \varepsilon_p} + C_{\alpha,\beta,\delta}\,\sigma_N^{-1-\delta p+2/\zeta}+C_{\alpha,\beta,\delta}\sigma_N^{-1+\delta}.\label{Functions of Sample Means rates}
    \end{align}
\end{theorem}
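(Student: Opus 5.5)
The plan follows the template of Theorem \ref{U-statistics theorem}: reduce to Theorem \ref{Berry-Esseen Bounds for Statistics of Weakly Dependent Samples Proof} by a block decomposition and then bound $\mathbb{E}[|R|]$ and $\gamma$. First, using Assumption 4 (sub-linear variance growth) and the mixing rate, I would build alternating blocks $B_j$ of variance $\asymp\sigma_N^{\alpha}$ and gaps $G_j$ of variance $\asymp\sigma_N^{\beta}$, with $0<\beta<\alpha<2$, exactly as in Corollary \ref{Non uniform third moment corollary}. Setting $Y_j=\sigma_N^{-\alpha/2}\sum_{i\in B_j}g(X_i)$, Rosenthal's inequality together with Assumption 3 gives $\sup_j\mathbb{E}[|Y_j|^3]<\infty$. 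The gaps separate the blocks by at least $\sigma_N^{\beta}/v_{\max}$ indices, so mixing makes the inter-block covariances negligible and the variance of $\sum_j Y_j$ linear in $k_N\asymp\sigma_N^{2-\alpha}$. The gap sums are absorbed into the remainder; their contribution to $\mathbb{E}[|R|]$ and $\gamma$ is bounded verbatim as in Corollary \ref{Non uniform third moment corollary}, which yields the term $\sigma_N^{2\beta/3-5\alpha/6+2/3+\varepsilon_p}$.

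Next I would handle the nonlinear remainder $R=N(H(\bar X)-H(0)-H'(0)\bar X)$.

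\textbf{Case $M_3<\infty$.} Taylor expand to get $R=\tfrac{N}{2}H''(0)\bar X^2+O(N\|\bar X\|^3)$. The quadratic part is $\frac{1}{2N}\sum_{i,j}H''(0)(X_i,X_j)$, a second-order statistic. I would treat it like $R'$ in Theorem \ref{U-statistics theorem}, splitting it into near-diagonal pairs $|i-j|\le m=\sigma_N^{\delta}$ and far pairs. The near pairs contribute $O(\sigma_N^{\delta})$ to $\mathbb{E}[|R|]$. The far pairs contribute $O(N\phi(m))\le C\sigma_N^{2/\zeta-p\delta}$, using $N^{\zeta}\ll\sigma_N^2$. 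The cubic term is controlled by $N\,\mathbb{E}\|\bar X\|^3\lesssim N^{-1/2}$, via a Rosenthal-type moment bound. Dividing by the normalization $\sigma_N$ produces the rates $\sigma_N^{-1+\delta}$ and $\sigma_N^{-1-\delta p+2/\zeta}$ in \eqref{Functions of Sample Means rates}.

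\textbf{Case $\mathcal{B}$ of type 2 with $M_2<\infty$.} I would instead use $|R|\le M_2N\|\bar X\|^2$. The type 2 property together with mixing gives $\mathbb{E}\|\sum X_j\|^2\lesssim N$, hence $\mathbb{E}[|R|]=O(1)$.

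For $\gamma$, I would take $R_{j,k}$ to be the same functional with the block variables in $[j,k]$ replaced by zero, as in \cite{bentkus1997berry}. Then $R_{j,k}-R_{j,k-1}$ is a difference of $NH$-increments caused by removing a single block $B_k$. A mean value expansion shows it equals $N\big(H'(\bar X^{(k)}+\theta\bar B_k)-H'(0)\big)\bar B_k$, where $\bar B_k=N^{-1}\sum_{i\in B_k}X_i$. This is bounded by $M_2N\|\bar X^{(k)}\|\,\|\bar B_k\|+M_2N\|\bar B_k\|^2$. Applying H\"older with exponents making $L^{3/2}$ norms from $L^3$ and $L^3$ bounds, the $3/2$ moment is of order $\sqrt{N}\cdot\sigma_N^{\alpha/2}/N$, which after normalization is polynomially small in $\sigma_N$ and dominated by the gap contribution.

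I expect the main obstacle to be the $\gamma$ estimate, specifically getting $L^3$ control of $\|\bar X^{(k)}\|$ and $\|\bar B_k\|$ in a general Banach space under only polynomial $\phi$-mixing. In the type 2 case I would try a blockwise symmetrization via Lemma \ref{Decoupling}: replace the blocks by independent copies at cost $\phi(\sigma_N^{\beta})$ per block, then use the type 2 inequality for independent summands. In the $M_3$ case I would push everything to real-valued quantities through $H'(0)$ and $H''(0)$ and apply Rosenthal's inequality. Finally, I would combine these bounds with Theorem \ref{Berry-Esseen Bounds for Statistics of Weakly Dependent Samples Proof}, applied with sample size $k_N$, and collect the terms.
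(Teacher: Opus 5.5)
Your architecture is the one the paper intends. Its proof only says to build the variance-threshold blocks of Theorem~\ref{U-statistics theorem} and run the argument of \cite[Theorem 2.1]{bentkus1997berry} on the blocked summands. However, the step that carries the alternative $M_3<\infty$ does not work. There you control the cubic Taylor remainder $\frac{M_3}{6}N\|\bar X\|^3$ by a ``Rosenthal-type'' bound $\mathbb{E}\|\sum_{i\in I}X_i\|^3\lesssim \mathrm{Card}(I)^{3/2}$, and the same kind of bound is what your $\gamma$-estimate needs for $\|\bar X^{(k)}\|$ and $\|\bar B_k\|$ in $L^3$. Rosenthal's inequality is scalar. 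In a general separable Banach space no such estimate holds, even for independent, bounded, centered summands: in $\ell^1$ with $X_j=\varepsilon_j e_j$ ($\varepsilon_j$ Rademacher) one has $\|\sum_{j\le N}X_j\|=N$, hence $N\|\bar X\|^3=N$. Controlling these norms is exactly what the type 2 hypothesis provides. Since $M_3<\infty$ implies $M_2<\infty$, the $M_3$ alternative only goes beyond the type 2 case when $\mathcal{B}$ is not of type 2, and then your route has nothing to control these norms with. Passing to scalars through $H'(0)$ and $H''(0)$ handles the quadratic chaos $\frac{N}{2}H''(0)(\bar X,\bar X)$. It does not handle the third derivative at the random point $t\bar X$, nor the remainder left in $N\bigl(H'(\bar X^{(k)}+\theta\bar B_k)-H'(0)\bigr)\bar B_k$ after expanding around $0$; these are controlled only through $\|\bar X\|$, $\|\bar X^{(k)}\|$, $\|\bar B_k\|$. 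You therefore need an extra hypothesis such as $\mathbb{E}\|\sum_{i\in I}X_i\|^3\le C\,\mathrm{Card}(I)^{3/2}$. More basically, both alternatives need $\mathbb{E}X_j=0$ in $\mathcal{B}$ and moment bounds on $\|X_j\|$. The stated assumptions give neither (only $\mathbb{E}[H'(0)X_j]=0$ and $\rho_3<\infty$), and the paper's appeal to \cite{bentkus1997berry} does not add them. Once they are added, your type 2 argument works if you decouple along short blocks of length $N^{\varepsilon}$ and truncate, since Lemma~\ref{Decoupling} needs bounded $\nu$. Done along the CLT blocks themselves, as you propose, it is circular: the type 2 inequality then needs the same norm bound inside each block.

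Two further steps are wrong as written. First, $\mathbb{E}|R_{\rm far}|\le CN\phi(m)$ for $R_{\rm far}=\frac{1}{2N}\sum_{|i-j|>m}H''(0)(X_i,X_j)$ does not follow from mixing. Lemma~\ref{Decoupling} plus centering makes each \emph{mean} $\mathbb{E}[H''(0)(X_i,X_j)]$ of size $O(\phi(m))$, which bounds $|\mathbb{E}R_{\rm far}|$, not $\mathbb{E}|R_{\rm far}|$. For independent real $X_i=\pm1$ and $H(x)=x+x^2/2$ on $[-1,1]$, one has $\mathrm{Var}(R_{\rm far})\approx 1/2$ while $N\phi(m)=0$. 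This is the paper's own bound for $\mathbb{E}|R_{>m}|$ in Theorem~\ref{U-statistics theorem}, the source of the term $\sigma_N^{-1-\delta p+2/\zeta}$, and it has the same defect. The repair is an $L^2$ computation: expand $\mathbb{E}R_{\rm far}^2$ over quadruples of indices and decouple any index isolated from the other three. This gives $O(m)$ plus a mixing error, which is harmless for the rate but uses fourth moments. Second, for $\zeta<1$ the blocks only satisfy $\mathrm{Card}(B_j)\le C\sigma_N^{\alpha/\zeta}$. Both the Rosenthal max-term $\|\max_{i\in B_j}|g(X_i)|\|_{L^3}\le(\rho\,\mathrm{Card}(B_j))^{1/3}$ and $\|\sum_{i\in B_j}X_i\|$ grow with cardinality, not with $\mathrm{Var}(\sum_{i\in B_j}g(X_i))$. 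So $\sup_j\mathbb{E}|Y_j|^3<\infty$ follows from $\rho_3$ only when $\zeta\ge 2/3$. Likewise, your $\gamma$-size $N^{-1/2}\sigma_N^{\alpha/2}$ is in general only $N^{-1/2}\sigma_N^{\alpha/(2\zeta)}$.
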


\textbf{Proof}: The proof involves a variance decomposition using the same variance threshold construction as in Theorem \ref{U-statistics theorem}. The rest of the proof follows the arguments of Theorem 2.1 of \cite{bentkus1997berry} with the blocked summands. 
\QEDB

\subsection{Studentized Sample Means}  Let $\{X_i\}_{i\in \mathbb{Z}^+}$ be a sequence of $\phi$-mixing, real-valued random variables with polynomially fast mixing rates (for a sufficiently large $p$), and such that  $\sigma_N^2\asymp N$. Also assume that $\mathbb{E}[X_1]=0$ and $\mathbb{E}[X_1^4]\leq \rho_4<+\infty$. Define $S_N = \sum_{j=1}^N X_j$ and $\sigma_N^2:=\mathbb{E}[S^2]$. Assume that $\lim_{N\to \infty}\sigma_N^2 >0$. Consider the estimator, $$s^2:= \sum_{j=1}^N \sum_{\ell: |j-\ell|\leq m}X_jX_{\ell},$$ of $\sigma$. Now define the standard deviation estimator, $$s := \begin{cases}
    \sqrt{s^2} &s\geq 0\\
    0 & s<0
\end{cases},$$ and define the studentized statistic, $$\Tilde{t} := \begin{cases}
    S/s &s>0\\
    0 & s=0
\end{cases}.$$ $\Tilde{t}$ is consistent and asymptotically unbiased for $m = C_0 N^{\epsilon_p}$ for a sufficiently large constant $C_0$ and $\epsilon_p\to 0$ \footnote{Studentization refers to a standardization of a statistic by dividing a sample statistic by a sample-based estimate of the population standard deviation. Studentized range, studentized residuals, and the Student $t$-statistic are classical examples, and are used when the parameter is unknown. ``Student" refers to the pseudonym of William Gosset, the statistician who developed the Student $t$-distribution.}. The following theorem follows immediately from the corresponding theorem in Bentkus et al. \cite{bentkus1997berry}. No efforts are made to extend this theorem to non-linear variance settings.
 \begin{theorem}[Berry-Esseen Bounds for Studentized Sample Means]
    For $p$ large enough, there exists a constant $A = A(K,  \sigma, \rho_4,p)$ independent of $N$ and $\varepsilon_p\to 0$ such that $$\sup_x |P(\tilde{t}<x) - \Phi(x)| \leq AN^{-1/2 + \varepsilon_p}.$$
\end{theorem}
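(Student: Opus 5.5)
We will deduce the statement from Theorem \ref{Berry-Esseen Bounds for Statistics of Weakly Dependent Samples Proof}, following the route of the studentized-mean theorem in \cite{bentkus1997berry}. Since $\sigma_N^2\asymp N$ and there is a uniform fourth moment bound, Assumptions 1--4 of Theorem \ref{Berry-Esseen Bounds for Statistics of Weakly Dependent Samples Proof} hold with $g_j(X_j)=X_j$. The only work is therefore to write $\tilde t$ (suitably rescaled) in the form $S+R$ and to control $\mathbb{E}[|R|]$ and $\gamma$.

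\textbf{Step 1: reduction to a linearized statistic.} Set $T:=\sigma_N\tilde t$. On the event $\{|s^2/\sigma_N^2-1|\le 1/2\}$ we Taylor expand $x\mapsto x^{-1/2}$ around $1$:
\[
T=S\Bigl(1-\tfrac12\bigl(\tfrac{s^2}{\sigma_N^2}-1\bigr)+O\bigl(\bigl(\tfrac{s^2}{\sigma_N^2}-1\bigr)^2\bigr)\Bigr).
\]
This suggests
\[
R:=-\tfrac12 S\bigl(s^2-\sigma_N^2\bigr)/\sigma_N^2,
\]
with the second-order term and the complement event treated as an additional perturbation. On the complement event $\{|s^2/\sigma_N^2-1|>1/2\}$ we use a Chebyshev/Rosenthal bound. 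The quantity $s^2-\sigma_N^2$ is a sum of the centered variables $X_jX_\ell-\mathbb{E}[X_jX_\ell]$ over $|j-\ell|\le m$, plus the bias term $\sum_{|j-\ell|>m}\mathbb{E}[X_jX_\ell]$. The bias is $O(N^2\phi(m)^{1/2})$, which is negligible for $p$ large. Rosenthal's inequality for $\phi$-mixing sequences (applied to the $m$-blocked products) gives $\|s^2-\sigma_N^2\|_{L^2}\le Am\sqrt N$. Hence $\mathbb{P}(|s^2/\sigma_N^2-1|>1/2)\le Am^2/N$. Since $m^2/N\le N^{-1/2+\varepsilon_p}$ would need $m\le N^{1/4}$, which holds, this is acceptable. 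The quadratic remainder is $O(|S|\,m^2/N)$ in $L^1$, i.e. $O(m^2N^{-1/2})$ after normalization. These pieces are absorbed by a smoothing argument like the one in Theorem \ref{Remainder Theorem}, giving a cost $N^{-1/2+\varepsilon_p}$. The smoothing can equivalently be carried out directly in the characteristic-function argument.

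\textbf{Step 2: bounding $\mathbb{E}[|R|]/\sqrt N$ and $\gamma$.} By Cauchy--Schwarz,
\[
\mathbb{E}[|R|]\le \|S\|_{L^2}\,\|s^2-\sigma_N^2\|_{L^2}/\sigma_N^2\le Am,
\]
so $N^{-1/2}\mathbb{E}[|R|]\le AN^{-1/2+\varepsilon_p}$. For $\gamma$ we take the natural choice $R_{j,k}:=-\frac12 S^{(j,k)}\bigl(s^{2,(j,k)}-\sigma_N^2\bigr)/\sigma_N^2$, where the superscript means that all summands involving indices in $[j,k]$ are deleted. Then $R_{j,k}-R_{j,k-1}$ consists of two kinds of terms:
\begin{itemize}
\item terms of the form $X_k\cdot(s^2-\sigma_N^2)/\sigma_N^2$, whose $L^{3/2}$ norm is at most $\|X_k\|_{L^4}\,\|s^2-\sigma_N^2\|_{L^2}/N\le Am N^{-1/2}$ by H\"older;
\item terms of the form $S\cdot\sum_{|\ell-k|\le m}X_kX_\ell/\sigma_N^2$, whose $L^{3/2}$ norm is at most $\|S\|_{L^2}\,m\,\rho_4^{1/2}/N\le AmN^{-1/2}$ by H\"older with exponents $(2,6)$ and the fourth moment bound.
\end{itemize}
Hence $\gamma\le AmN^{-1/2}$, and $\gamma N^{\varepsilon_p}\le AN^{-1/2+2\varepsilon_p}$. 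Redefining $\varepsilon_p$ then gives the claimed bound.

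\textbf{Main obstacle.} The delicate step is the $L^2$ control $\|s^2-\sigma_N^2\|_{L^2}\lesssim m\sqrt N$ for the non-stationary, polynomially mixing products $X_jX_\ell$. We will get it from covariance inequalities for $\phi$-mixing sequences: $|\mathrm{Cov}(X_{j}X_{\ell},X_{j'}X_{\ell'})|\le A\phi(d)^{1/2}\rho_4$, where $d$ is the gap between the index pairs. Summing over pairs at distance at least $m$ uses the polynomial rate with $p$ large, while close pairs contribute $O(Nm^2)$. The remaining issue is ensuring the decoupling from Lemma \ref{Decoupling} applies to these unbounded products. We will handle this by the truncation at level $\sqrt N$ described at the start of the proof of Theorem \ref{Berry-Esseen Bounds for Statistics of Weakly Dependent Samples Proof}, whose cost is controlled by $\rho_4$.
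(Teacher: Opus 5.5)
The paper gives no argument of its own for this statement. It only asserts that the proof in \cite{bentkus1997berry} carries over verbatim, with Theorem~\ref{Berry-Esseen Bounds for Statistics of Weakly Dependent Samples Proof} in place of their Theorem 1.1. Your explicit reduction is a reasonable way to fill this in. The steps that only use $L^2$ control of $s^2-\sigma_N^2$ are fine: the bias, $\|s^2-\sigma_N^2\|_{L^2}\lesssim m\sqrt N$, the complement event, and $N^{-1/2}\mathbb{E}|R|\lesssim mN^{-1/2}$.

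The gap is the bound on $\gamma$. Your remainder $R=-S(s^2-\sigma_N^2)/(2\sigma_N^2)$ is cubic in the $X_j$, so $R_{j,k}-R_{j,k-1}$ consists of triple products. Neither H\"older step is valid with only $\rho_4$:
\begin{itemize}
\item $\|X_k\|_{L^4}\|s^2-\sigma_N^2\|_{L^2}$ bounds the $L^{4/3}$ norm of $X_k(s^2-\sigma_N^2)$, not its $L^{3/2}$ norm. Keeping $L^2$ on the second factor forces $\|X_k\|_{L^6}$.
\item H\"older with exponents $(2,6)$ needs $\|X_kX_\ell\|_{L^6}$, i.e.\ twelve moments, whereas $\rho_4^{1/2}$ only bounds $\|X_kX_\ell\|_{L^2}$.
\end{itemize}
The index $\ell=k$ even produces an $X_k^3$ term, so without truncation $\gamma$ involves $\mathbb{E}|X_k|^{9/2}$ and need not be finite.

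Truncation at $\sqrt N$ is cheap here, since it is active with probability at most $\rho_4/N$. It repairs the second kind of term if you swap exponents: bound it by $\|S\|_{L^6}\,\|\sum_{|\ell-k|\le m}X_kX_\ell\|_{L^2}$, with $\|S\|_{L^6}\lesssim\sqrt N$ by Rosenthal. It does not repair the first kind, because $\|X_k\|_{L^6}\le(N\rho_4)^{1/6}$ only gives $\gamma\lesssim mN^{-1/3}$.

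To close the gap on your route you must use the mixing. Split $s^2-\sigma_N^2$ into two parts:
\begin{itemize}
\item the summands within distance $N^{\varepsilon}$ of $k$, which after truncation contribute $O(mN^{1/6+\varepsilon})$ in $L^{3/2}$, far below $m\sqrt N$;
\item a far part, from which you decouple $|X_k|^{3/2}$ by Lemma~\ref{Decoupling}.
\end{itemize}
Apply the lemma to the truncated (bounded) variables, with $p$ large enough to absorb their polynomial sup norms. This takes a few steps, because the block at $k$ sits between two far blocks while $\phi$ only separates past from future.

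A cleaner way avoids cubic terms altogether. Fix $x$ and write $\{\tilde t<x\}=\{S-xs<0\}$. Linearize $s=\sigma_N(1+\Delta)^{1/2}$ with $\Delta:=s^2/\sigma_N^2-1$, and apply the theorem to $S-\frac{x}{2\sigma_N}(s^2-\sigma_N^2)$ at the point $x$. The increments are then $\frac{x}{2\sigma_N}X_k\sum_{|\ell-k|\le m}c_\ell X_\ell$ with $c_\ell\in\{1,2\}$. Their $L^{3/2}$ norm is at most $C|x|m\rho_4^{1/2}N^{-1/2}$, and large $|x|$ is handled by tail bounds.

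The same moment issue affects your quadratic term in Step 1. The bound $\mathbb{E}[|S|\Delta^2]\lesssim m^2N^{-1/2}$ does not follow from $\|S\|_{L^2}$ and $\|\Delta\|_{L^2}$; it needs $\|\Delta\|_{L^4}$, i.e.\ eight moments. Moreover, Markov smoothing with a normalized $L^1$ bound $\eta$ costs $\eta^{1/2}$, and the Chebyshev version in Theorem~\ref{Remainder Theorem} costs a cube root. So you would need $\eta\lesssim N^{-1+2\varepsilon_p}$.

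Use an event bound instead:
$\mathbb{P}(|S|\Delta^2>\sigma_N N^{-1/2})\le\mathbb{P}(|S|>\sigma_N\log N)+\mathbb{P}(\Delta^2>N^{-1/2}/\log N)$.
The first term is controlled by the Berry--Esseen bound for $S$, and the second by Chebyshev, giving $O(m^2N^{-1/2}\log N)$. In the fixed-$x$ route the quadratic term is $|x|\sigma_N\,O(\Delta^2)$, and only $\|\Delta\|_{L^2}$ is needed.
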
 

Optimal rates (in the stationary case) have already been attained if we were to assume that the variables are independent. See, for example, the proof by Jing et al. \cite{jing2000berry} of optimal Berry-Esseen bounds for a class of studentized statistics, including the studentized sample mean. 

The proof of this theorem and the other applications in Bentkus et al \cite{bentkus1997berry} when assuming linear variance growth for non-stationary processes are exempt, as their proofs are identical.

\section{Conclusion and Future Avenues} In this paper, we proved Berry-Esseen bounds for a broad class of statistics of sequences of weakly dependent random variables and then considered its applications. In Theorem \ref{Berry-Esseen Bounds for Statistics of Weakly Dependent Samples Proof}, various non-linear statistics are decoupled using the decoupling lemma, which requires $\phi$-mixing. An avenue for future research would be to prove Theorem \ref{Berry-Esseen Bounds for Statistics of Weakly Dependent Samples Proof} for $\alpha$-mixing sequences or another weaker form of mixing, or find counterexamples. Also, future research could determine in what applications it is possible to drop the linear variance assumption beyond the uniformly bounded case. This seems to require new methods already when $R=0$ since it is unclear when one can control the $L^3$ norms of blocks by means of the $L^2$ norms. 

Other applications of Theorem \ref{Berry-Esseen Bounds for Statistics of Weakly Dependent Samples Proof} to processes with non-linear variance growth can also be considered, such as V-statistics or linear combinations of order statistics (as in \cite{bentkus1997berry}), and small perturbations of stationary Markov chains, and also to processes in random environments (for which linear variance growth is expected). It is also an open question whether the $N^{\varepsilon_p}$ factors in the asymptotic bounds of Theorem \ref{Berry-Esseen Bounds for Statistics of Weakly Dependent Samples Proof} can be dropped (or even in the corresponding proof of Bentkus et al. \cite{bentkus1997berry}) even when $g_j$ are uniformly bounded; we did not attempt to find tight bounds for Theorem \ref{Berry-Esseen Bounds for Statistics of Weakly Dependent Samples Proof}. It would also be interesting to see how close to optimal the rates obtained for the applications are. On the other hand, tighter asymptotic bounds or a weaker form of mixing for the assumptions of stationarity and linear variance (as in Section \ref{Stationary extension section}) are another avenue for future research. Future research could also produce a simpler proof of Theorem \ref{Berry-Esseen Bounds for Statistics of Weakly Dependent Samples Proof} using the same or different methods of proof.

\section{Appendix}  This appendix contains proofs of results used in this paper.

We use the following lemma in the proof of Theorem \ref{Berry-Esseen Bounds for Statistics of Weakly Dependent Samples Proof}. The proof is for $\alpha$-mixing (strong mixing) sequences. In the proof of Theorem \ref{Berry-Esseen Bounds for Statistics of Weakly Dependent Samples Proof}, we apply the result to $\phi$-mixing sequences, which is valid because $\alpha(m)\leq \frac{1}{2}\phi(m)$ for all $m\in \mathbb{Z}^+$. Recall that the $m$-th $\alpha$-mixing coefficient is defined by $$\alpha(m):=\sup_{k\in \mathbb{Z}^{+}}\sup\left\{\left|\bbP(A\cap B) - \bbP(A)\bbP(B)\right|: A\in \sigma[k+m, +\infty), B\in\sigma[1,k] \right\}.$$ 

\begin{lemma}[Products in Strong Mixing Sequences]\label{Product Lemma} Let $m\in \mathbb{Z}^{+}$. Let $\{Y_j\}_{j=1}^{\infty}$ be an $\alpha$-mixing sequence of random variables such that $\|Y_j\|_{L^\infty}\leq C_j$, and $Y_j\in \sigma[a_j, b_j]$ and $a_j< b_j < a_{j+1} - m$ for each $j$. Then, there exists a constant $A>0$ such that for each $d \in \mathbb{Z^{+}}$,
    $$\left|\mathbb{E}\left[\prod_{j=1}^d Y_j\right] - \prod_{j=1}^d \mathbb{E}[Y_j]\right|\leq A\left(\prod_{j=1}^d C_j\right)(d-1)\,\alpha(m).$$
\end{lemma}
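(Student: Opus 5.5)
The statement to prove is Lemma \ref{Product Lemma}. I would argue by induction on $d$, peeling off one factor at a time with a telescoping decomposition. The case $d=1$ is trivial, since the left-hand side vanishes. For $d\ge 2$, set $P_k := \prod_{j=1}^{k} Y_j$. Then
\begin{align*}
\mathbb{E}\left[P_d\right]-\prod_{j=1}^d\mathbb{E}[Y_j] = \left(\mathbb{E}[P_{d-1}Y_d]-\mathbb{E}[P_{d-1}]\,\mathbb{E}[Y_d]\right) + \mathbb{E}[Y_d]\left(\mathbb{E}[P_{d-1}]-\prod_{j=1}^{d-1}\mathbb{E}[Y_j]\right).
\end{align*}
Here $P_{d-1}$ is $\sigma[1,b_{d-1}]$-measurable and $Y_d$ is $\sigma[a_d,+\infty)$-measurable. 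By the hypothesis $a_d>b_{d-1}+m$, the gap between these two index ranges is at least $m$.

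The key step is the covariance inequality for bounded random variables under strong mixing. If $\xi\in\sigma[1,k]$ and $\eta\in\sigma[k+m,+\infty)$ satisfy $\|\xi\|_\infty\le D_1$ and $\|\eta\|_\infty\le D_2$, then $|\text{Cov}(\xi,\eta)|\le 4D_1D_2\,\alpha(m)$. This is the classical Ibragimov-type bound. I would prove it in the standard way: write $\text{Cov}(\xi,\eta)=\mathbb{E}[\xi(\mathbb{E}[\eta\,|\,\sigma[1,k]]-\mathbb{E}\eta)]$, and reduce via $\xi\mapsto \text{sgn}$ of the conditional deviation to differences of the form $\bbP(A\cap B)-\bbP(A)\bbP(B)$, each bounded by $\alpha(m)$.

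If the $Y_j$ are complex-valued, as in the application with $e^{itT}$ and the $J^{(j)}_s$, I would apply the real inequality to the real and imaginary parts separately. This costs only a further absolute constant, which is absorbed into $A$.

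Apply the covariance bound with $D_1=\prod_{j<d}C_j$ and $D_2=C_d$; the first term above is then at most $A_0\left(\prod_{j=1}^d C_j\right)\alpha(m)$. For the second term, use $|\mathbb{E}Y_d|\le C_d$ together with the induction hypothesis for $d-1$, which gives at most $C_d\cdot A\left(\prod_{j<d}C_j\right)(d-2)\,\alpha(m)$. Summing the two and taking $A\ge A_0$ yields the bound $A\left(\prod_{j=1}^d C_j\right)(d-1)\,\alpha(m)$, which closes the induction.

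The only step that needs care is the covariance inequality, in particular getting a constant that does not depend on $d$ or on the sizes of the index blocks; everything else is bookkeeping. A minor point is that the gap condition $a_j<b_j<a_{j+1}-m$ gives separation at least $m$ between consecutive blocks, and we need $\alpha$ to be nonincreasing in its argument so that the larger gaps give at most $\alpha(m)$. This holds directly from the definition, since enlarging the gap shrinks the future $\sigma$-algebra.
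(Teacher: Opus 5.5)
Your proof is correct and follows essentially the same route as the paper: induction on $d$ by splitting off $\mathbb{E}[P_{d-1}Y_d]-\mathbb{E}[P_{d-1}]\,\mathbb{E}[Y_d]$ plus $\mathbb{E}[Y_d]$ times the $(d-1)$-fold error, with the first term controlled by the two-variable bounded covariance inequality under $\alpha$-mixing applied to $P_{d-1}$ and $Y_d$. Your version is more explicit than the paper's, which writes out only $d\le 3$ and leaves the constant in the $L^1$ bound for $\mathbb{E}[Y_2-\mathbb{E}Y_2\,|\,\sigma(Y_1)]$ unspecified; you instead give the Ibragimov-type constant explicitly, check the gap and measurability conditions, and treat the complex-valued case.
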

\textbf{Proof}: This result may be proved by induction over $d$. We only show $d=1, 2, 3$. After proving these cases, the inductive process should be clear. The $d = 1$ case is trivial. For the case $d = 2$, by H\"{o}lder's inequality we have, \begin{align}
    \mathlarger{|}\mathbb{E}\left[Y_1 Y_2\right] - \mathbb{E}\left[Y_1\right]\,\mathbb{E}\left[Y_2\right]\mathlarger{|} &= \mathlarger{|}\mathbb{E}\left[Y_1\,(Y_2 - \mathbb{E}[Y_2])\right]\mathlarger{|}\\
    &=\mathlarger{|}\mathbb{E}\left[Y_1\,\mathbb{E}\left[Y_2 - \mathbb{E}\left[Y_2\right]\,|\,\sigma(Y_1)\right]\right]\mathlarger{|}\\
    &\leq \left\|Y_1\right\|_{L^\infty}\left\|\mathbb{E}\left[Y_2 - \mathbb{E}\left[Y_2\right]\,|\,\sigma(Y_1)\right]\right\|_{L^1}\\
    &\leq AC_1C_2\,\alpha(m).
\end{align} For $d = 3$, by the triangle inequality we may write, \begin{align}
    \mathlarger{|}\mathbb{E}[Y_1 Y_2 Y_3] - \mathbb{E}[Y_1]\mathbb{E}[Y_2] \mathbb{E}[Y_3]\mathlarger{|} &=\mathlarger{|}\mathbb{E}[(Y_1 Y_2) Y_3] - \mathbb{E}[Y_1]\mathbb{E}[Y_2] \mathbb{E}[Y_3] - \mathbb{E}[Y_1Y_2] \mathbb{E}[Y_3]+\mathbb{E}[Y_1Y_2] \mathbb{E}[Y_3]\mathlarger{|}\\
    &\leq \mathlarger{|}\mathbb{E}[Y_1 Y_2 Y_3] -  \mathbb{E}[Y_1Y_2] \mathbb{E}[Y_3]\mathlarger{|} + \mathlarger{|}\mathbb{E}[Y_1Y_2] \mathbb{E}[Y_3] - \mathbb{E}[Y_1]\mathbb{E}[Y_2] \mathbb{E}[Y_3]\mathlarger{|}.
\end{align} Furthermore, 
\begin{align}
    \mathlarger{|}\mathbb{E}[Y_1Y_2] \,\mathbb{E}[Y_3] - \mathbb{E}[Y_1]\,\mathbb{E}[Y_2] \,\mathbb{E}[Y_3]\mathlarger{|}\leq AC_1C_2C_3\,\alpha(m)
\end{align} by the $d=2$ case. This thus gives, \begin{align}
    \mathlarger{|}\mathbb{E}\left[Y_1 Y_2 Y_3\right] -  \mathbb{E}[Y_1Y_2]\, \mathbb{E}[Y_3]\mathlarger{|}  &= \mathlarger{|}\mathbb{E}\left[Y_1 Y_2\, \mathbb{E}[Y_3 - \mathbb{E}[Y_3]\,|\,\sigma(Y_1, Y_2) ]\right]\mathlarger{|}\\
    &\leq AC_1C_2C_3\,\alpha(m).\label{d three induction}
\end{align} Inequality \ref{d three induction} follows from the $d=2$ case.  Summing these results concludes the $d=3$ case. The inductive step can be established using the same method as in the $d=3$ case. \QEDB

The following is a Theorem from \cite{rio1993covariance} that provides explicit covariance bounds for $\alpha$-mixing sequences.

\begin{theorem}[Rio, 1993]\label{Rio Covariance Theorem} Suppose $\{X_i\}_{i\in\mathbb{Z}^+}$ is an $\alpha$-mixing sequence, and for random variables $X$, define $\alpha(i,j) := \alpha(|i-j|)$ and the quantile function $Q_X(u):=\inf \{t>0: \bbP(|X| > t) < u\}$. Then, for each $i,j\in \mathbb{Z}^+$,
    $$\left|\text{Cov}\left(X_i,X_j\right)\right| \leq 2 \int_0^{\alpha(i,j)}\,Q_{X_i} (u)\, Q_{X_j} (u)\,du.$$
\end{theorem}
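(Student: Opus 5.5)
The plan is to reduce the covariance to covariances of indicator functions through Hoeffding's identity, bound each of those pointwise by a minimum of the mixing coefficient and two tail probabilities, and then convert the resulting double integral into an integral of quantile functions by Fubini. Throughout, write $X=X_i$, $Y=X_j$ and $\alpha=\alpha(i,j)$. We may assume the right-hand side is finite (otherwise there is nothing to prove); in that case $XY$ and the marginals are integrable enough for Hoeffding's formula to hold. The steps are as follows.

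\textbf{Step 1 (Hoeffding's identity on the positive quadrant).} For $u,v\ge 0$ put
$$f_u:=\mathbbm{1}_{\{X>u\}}-\mathbbm{1}_{\{X<-u\}}, \qquad g_v:=\mathbbm{1}_{\{Y>v\}}-\mathbbm{1}_{\{Y<-v\}}.$$
Since $X=\int_0^\infty f_u\,du$ and $Y=\int_0^\infty g_v\,dv$, Fubini gives
$$\text{Cov}(X,Y)=\int_0^\infty\!\!\int_0^\infty \text{Cov}(f_u,g_v)\,du\,dv.$$
Note that $f_u$ is measurable with respect to the $\sigma$-algebra of $X_i$, $g_v$ is measurable with respect to that of $X_j$, and $|f_u|,|g_v|\le 1$.

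\textbf{Step 2 (pointwise bound).} I aim to show
$$|\text{Cov}(f_u,g_v)|\le 2\min\{\alpha,\ \bbP(|X|>u),\ \bbP(|Y|>v)\}.$$
The tail terms are easy. Since $|g_v-\mathbb{E} g_v|\le 2$, we get $|\text{Cov}(f_u,g_v)|=|\mathbb{E}[f_u(g_v-\mathbb{E} g_v)]|\le 2\,\mathbb{E}|f_u|= 2\,\bbP(|X|>u)$, and symmetrically $|\text{Cov}(f_u,g_v)|\le 2\,\bbP(|Y|>v)$.

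The $\alpha$ term is the main obstacle. Expanding the four indicator covariances $c_{ab}:=\text{Cov}(\mathbbm{1}_{A_a},\mathbbm{1}_{B_b})$ naively only gives $4\alpha$. Here $A_+=\{X>u\}$, $A_-=\{X<-u\}$, $B_+=\{Y>v\}$, $B_-=\{Y<-v\}$, and each $|c_{ab}|\le\alpha$ by the definition of $\alpha$. To save the factor $2$ I will first try writing
$$\text{Cov}(f_u,g_v)=\mathbb{E}\big[f_u\,(\bbP(B_+\,|\,X)-\bbP(B_-\,|\,X)-\mathbb{E} g_v)\big].$$
I then bound this by splitting according to the sign of the inner conditional quantity. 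This replaces $f_u$ by $\pm$ the indicator of a single event $E$ in $\sigma(X)$, up to the sign choices. The result is at most $|\text{Cov}(\mathbbm{1}_{E},\mathbbm{1}_{B_+})|+|\text{Cov}(\mathbbm{1}_{E'},\mathbbm{1}_{B_-})|$ for suitable events $E,E'$ in $\sigma(X)$, which is $\le 2\alpha$. This is the standard argument from \cite{rio1993covariance}. If it turns out not to close, I will accept a constant $4$ instead, which is harmless for the applications in Theorem \ref{Non Uniform Third Moment Application}, since that theorem only uses the bound up to constants.

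\textbf{Step 3 (quantile conversion).} Write the minimum as a layer-cake integral:
$$\min\{\alpha,p,q\}=\int_0^\infty \mathbbm{1}_{\{w<\alpha\}}\mathbbm{1}_{\{w<p\}}\mathbbm{1}_{\{w<q\}}\,dw.$$
Apply this with $p=\bbP(|X|>u)$ and $q=\bbP(|Y|>v)$, and use Fubini in $(u,v,w)$. The key fact is that $\int_0^\infty \mathbbm{1}_{\{w<\bbP(|X|>u)\}}\,du=Q_X(w)$ for almost every $w$, which follows from the definition of $Q_X$ as the generalized inverse of the tail function. This yields
$$\int_0^\infty\!\!\int_0^\infty \min\{\alpha,\bbP(|X|>u),\bbP(|Y|>v)\}\,du\,dv=\int_0^{\alpha}Q_X(w)\,Q_Y(w)\,dw.$$
Combining this with Steps 1 and 2 gives
$$|\text{Cov}(X_i,X_j)|\le 2\int_0^{\alpha(i,j)}Q_{X_i}(u)\,Q_{X_j}(u)\,du,$$
which is the claim.
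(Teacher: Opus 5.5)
The paper gives no proof of this statement; it only cites \cite{rio1993covariance}. Your Steps 1 and 3 are sound. The $\alpha$-part of Step 2, however, is false, not merely unproved: $|\text{Cov}(f_u,g_v)|\le 2\alpha$ fails. Take $X=Y=\varepsilon$, a Rademacher variable, and $0\le u,v<1$. Then $f_u=g_v=\varepsilon$, so $\text{Cov}(f_u,g_v)=1$. But $\sup|\bbP(A\cap B)-\bbP(A)\bbP(B)|=1/4$ over $A,B\in\sigma(\varepsilon)$. Indeed $c_{++}=c_{--}=1/4$ and $c_{+-}=c_{-+}=-1/4$, so the naive bound $4\alpha$ is attained. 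The factor $2$ disappears in your sign-splitting step. Write $h_\pm=\bbP(B_\pm\,|\,X)-\bbP(B_\pm)$, so that $|\text{Cov}(f_u,g_v)|\le\mathbb{E}|h_+|+\mathbb{E}|h_-|$. Since $\mathbb{E}h_\pm=0$ and the sign of $h_\pm$ is $2\mathbbm{1}_{E_\pm}-1$ rather than a single indicator, you get $\mathbb{E}|h_\pm|=2\,\text{Cov}(\mathbbm{1}_{E_\pm},\mathbbm{1}_{B_\pm})$ with $E_\pm=\{h_\pm\ge 0\}$. Hence the bound is $4\alpha$, not $2\alpha$.

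The same example shows that the statement as transcribed is itself false for the coefficient $\alpha(m)$ defined in the paper's appendix, which is Rosenblatt's normalization. Let $X_1=X_2=\varepsilon$, with $(X_k)_{k\ge 3}$ independent of each other and of $\varepsilon$. Then $\alpha(1)=1/4$, $\alpha(m)=0$ for $m\ge2$, and $Q_{X_1}=Q_{X_2}=1$ on $(0,1]$. So $|\text{Cov}(X_1,X_2)|=1>1/2=2\int_0^{\alpha(1)}Q_{X_1}Q_{X_2}\,du$. For this normalization Rio's inequality has upper limit $2\alpha$: $|\text{Cov}(X,Y)|\le 2\int_0^{2\alpha}Q_XQ_Y$. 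The form with upper limit $\alpha$ is correct only if $\alpha$ is normalized as twice the Rosenblatt coefficient. Your method proves exactly Rio's inequality once Step 2 is replaced by
\[
|\text{Cov}(f_u,g_v)|\le\sum_{a,b\in\{+,-\}}\min\{\alpha,\bbP(A_a),\bbP(B_b)\}\le 2\min\{2\alpha,\bbP(|X|>u),\bbP(|Y|>v)\},
\]
after which Step 3, with $2\alpha$ in place of $\alpha$, gives $2\int_0^{2\alpha}Q_XQ_Y$. So your fallback is forced, and the change is not purely cosmetic downstream. Corollary \ref{Rio Covariance Corollary} becomes $|\text{Cov}(X_i,X_j)|\le 6\cdot 2^{1/3}\rho^{2/3}\alpha(i,j)^{1/3}$, and the explicit numerical constant in Assumption 7 of Theorem \ref{Non Uniform Third Moment Application} must change accordingly. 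A smaller point in Step 1: finiteness of the right-hand side does not make $X$, $Y$, $XY$ integrable; for example, the right-hand side is $0$ when $\alpha=0$. Assume instead, as Rio does, that $X,Y$ are integrable and $Q_XQ_Y\in L^1[0,1]$, which gives $\mathbb{E}|XY|\le\int_0^1 Q_XQ_Y$.
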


The following is a useful corollary of Theorem \ref{Rio Covariance Theorem} for us.

\begin{corollary}[Mixing Covariance Bound]\label{Rio Covariance Corollary} Let $\{X_i\}$ be a centered, $\alpha$-mixing (or $\phi$-mixing) sequence and suppose that $\sup_{i\in\mathbb{Z}^+} \mathbb{E}[|X_i|^3]\leq \rho$. Then for each $i,j$,  $$|\text{Cov}\,(X_i, X_j)| \leq 6\rho^{2/3} \alpha(i,j)^{1/3}.$$
\end{corollary}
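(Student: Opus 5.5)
The plan is to obtain the corollary directly from Theorem \ref{Rio Covariance Theorem} by bounding the quantile functions through a Markov-type tail bound. Then we apply H\"older's inequality to the resulting integral over $[0,\alpha(i,j)]$. Throughout, $\alpha(i,j)\le 1/4$, and for a $\phi$-mixing sequence we may use $\alpha(m)\le\phi(m)/2$ as recalled before Lemma \ref{Product Lemma}. So it suffices to treat the $\alpha$-mixing case.

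\emph{Step 1 (quantile bound).} Fix $i$ and write $X=X_i$. By Markov's inequality, $\bbP(|X|>t)\le \mathbb{E}[|X|^3]/t^3\le \rho/t^3$. Hence for every $u\in(0,1)$, any $t>(\rho/u)^{1/3}$ satisfies $\bbP(|X|>t)<u$. From the definition of $Q_X$ this gives
\begin{align*}
Q_{X}(u)\le \rho^{1/3}u^{-1/3}.
\end{align*}
As a sharper alternative, one can use $\int_0^1 Q_X(u)^3\,du=\mathbb{E}[|X|^3]\le\rho$, since $Q_X$ is the decreasing rearrangement of $|X|$.

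\emph{Step 2 (integrate).} I would use H\"older with exponents $(3,3,3)$, applied to $Q_{X_i}\cdot Q_{X_j}\cdot 1$ on $[0,\alpha]$ with $\alpha=\alpha(i,j)$:
\begin{align*}
\int_0^{\alpha}Q_{X_i}(u)Q_{X_j}(u)\,du\le \Big(\int_0^1 Q_{X_i}^3\Big)^{1/3}\Big(\int_0^1 Q_{X_j}^3\Big)^{1/3}\alpha^{1/3}\le \rho^{2/3}\alpha^{1/3}.
\end{align*}
Multiplying by $2$ then gives $|\mathrm{Cov}(X_i,X_j)|\le 2\rho^{2/3}\alpha(i,j)^{1/3}$, which is stronger than the claimed constant $6$.

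If instead I use only the crude pointwise bound from Step 1, the computation is
\begin{align*}
2\rho^{2/3}\int_0^\alpha u^{-2/3}\,du=6\rho^{2/3}\alpha^{1/3}.
\end{align*}
This reproduces exactly the constant $6$ in the statement, so that is presumably the intended route, and I would present it that way.

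\emph{Main obstacle.} The only delicate point is justifying the quantile identity or bound rigorously from the paper's definition $Q_X(u)=\inf\{t>0:\bbP(|X|>t)<u\}$, including the edge cases of strict versus non-strict inequalities. The Markov route avoids the rearrangement identity entirely and only needs the elementary implication above, so I would commit to it. A further check is needed for the $\phi$-mixing case: Theorem \ref{Rio Covariance Theorem} is applied with $\alpha(i,j)$ replaced by its upper bound $\phi(|i-j|)/2$. This is legitimate because the integrand is nonnegative, so the integral is monotone in its upper limit.
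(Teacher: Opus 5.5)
Your committed route is exactly the paper's proof: Markov gives $Q_X(u)\le\rho^{1/3}u^{-1/3}$, and inserting this into Theorem \ref{Rio Covariance Theorem} gives $2\rho^{2/3}\int_0^{\alpha}u^{-2/3}\,du=6\rho^{2/3}\alpha^{1/3}$.

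The catch, which the paper shares, is that Theorem \ref{Rio Covariance Theorem} with upper limit $\alpha(i,j)$ is misquoted and is false under the paper's definition of $\alpha$. Take $X_1=X_2=\epsilon$ Rademacher, with $X_3,X_4,\dots$ i.i.d.\ Rademacher and independent of $\epsilon$. Then $\alpha(1)=1/4$ (take $k=1$ and $A=B=\{\epsilon=1\}$), and $Q_{X_1}=Q_{X_2}\equiv 1$ on $(0,1]$. Hence $\mathrm{Cov}(X_1,X_2)=1>1/2=2\int_0^{1/4}Q_{X_1}Q_{X_2}\,du$. As far as I recall, Rio's inequality reads $|\mathrm{Cov}(X,Y)|\le 2\int_0^{2\alpha}Q_X(u)Q_Y(u)\,du$; the example above is its equality case, so the upper limit $2\alpha$ cannot be lowered to $\alpha$. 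With this form, the pointwise Markov bound gives only $6\cdot 2^{1/3}\rho^{2/3}\alpha^{1/3}$. So the route that reproduces the constant $6$ exactly does so only because of the misquoted upper limit.

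Your H\"older alternative is the argument that actually proves the statement. Since $\alpha\le 1/4$ gives $2\alpha\le 1/2$, and $\int_0^1 Q_X(u)^3\,du=\mathbb{E}[|X|^3]\le\rho$, you get
$|\mathrm{Cov}(X_i,X_j)|\le 2(2\alpha)^{1/3}\rho^{2/3}=2^{4/3}\rho^{2/3}\alpha(i,j)^{1/3}\le 6\rho^{2/3}\alpha(i,j)^{1/3}$. Even the weaker form $|\mathrm{Cov}(X,Y)|\le 4\int_0^{\alpha}Q_XQ_Y\,du$ gives the constant $4$.

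The rearrangement identity you wanted to avoid is harmless. Set $H(t)=\mathbb{P}(|X|>t)$ and $Q(u)=\inf\{t: H(t)\le u\}$. Right-continuity of $H$ gives $Q(u)\le t\iff H(t)\le u$, so $Q(U)$ has the law of $|X|$ for $U$ uniform on $(0,1)$. The paper's strict-inequality quantile agrees with $Q$ except at the countably many levels $u$ on which $H$ is constant over a nondegenerate interval, so the integral is unchanged.

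The corollary is therefore true, but the proof to present is the H\"older one, not the one that matches the constant $6$ on the nose.
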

\textbf{Proof}: By Markov's inequality, we have that \begin{align}
    Q_{X_i}(u) &= \inf \{t>0: \bbP(|X_i| > t) < u\}\\
    &\leq \inf \{t>0: \frac{\|X_i\|_{L^3}^3}{t^3}\leq u\}\\
    &\leq \frac{\|X_i\|_{L^3}}{u^{1/3}}\leq \frac{\rho^{1/3}}{u^{1/3}}.
\end{align}

By Theorem \ref{Rio Covariance Theorem}, \begin{align}
    |\text{Cov}(X_i, X_j)|&\leq 2\int_0^{\alpha(i,j)} \left(\frac{\rho^{1/3}}{u^{1/3}}\right)\left(\frac{\rho^{1/3}}{u^{1/3}}\right)du\\
    &= 6\rho^{2/3}\, (\alpha(i,j))^{1/3}.
\end{align}

 \QEDB

\section{Theorem \ref{Berry-Esseen Bounds for Statistics of Weakly Dependent Samples Proof} Truncation Details}\label{TruncSec}

In Theorem \ref{Berry-Esseen Bounds for Statistics of Weakly Dependent Samples Proof} (and similarly in some of its applications), we replace $\mathbb{E}\,[g_j(X_j)]$ with the truncated and centered random variable $\mathbb{E}\,[g_j(X_j)\mathbbm{1}_{\{|g_j(X_j)|\leq \sqrt{N}\}}]$. In this section, we add details for why this substitution is justified. 

When we replace $\mathbb{E}\,[g_j(X_j)]$ with $\mathbb{E}\,[g_j(X_j)\mathbbm{1}_{\{|g_j(X_j)|\leq \sqrt{N}\}}]$ in Theorem \ref{Berry-Esseen Bounds for Statistics of Weakly Dependent Samples Proof}, we introduce a few sources of error. We bound each source of error individually to how that truncation by $\sqrt{N}$ introduces only negligible $O(N^{-1/2})$ error, so the resultant Berry-Esseen bounds are unchanged.

%All three sources of error arise in the Taylor expansion. The single-factor Taylor expansion is $e^{it g(X_j)^{(B)}/\sqrt{N}} = 1 + \frac{it}{\sqrt{N}}g(X_j)^{(B)} - \frac{t^2}{2N}(g(X_j)^{(B)})^2 + R_j^{(B)}(t)$, where $R_j^{(B)}(t):= \sum_{k=3}^{\infty} \frac{(itg(X_j)^{(B)}/ \sqrt{N})^k}{k!}$ is the remainder.

\textbf{Shift Error}: The centering shift may change after truncation, but we show that it can be absorbed into $R$ without affecting the Berry-Esseen rates. Define 
$$\widetilde g_j(X_j)
:= g_j(X_j)\mathbbm 1_{\{|g_j(X_j)|\le \sqrt{N}\}}
- \mathbb E\big[g_j(X_j)\mathbbm 1_{\{|g_j(X_j)|\le \sqrt{N}\}}\big], \,\,\, \widetilde S_N := \sum_{j=1}^N \widetilde g_j(X_j),\,\,\,
\widetilde T_N := \widetilde S_N + \widetilde R_N.$$
 If we let $\Delta_j := g_j(X_j)-\widetilde g_j(X_j) = g_j(X_j)\mathbbm 1_{\{|g_j(X_j)|>\sqrt{N}\}}
- \mathbb E[g_j(X_j)\mathbbm 1_{\{|g_j(X_j)|>\sqrt{N}\}}]$, then the shift error is $S_N-\widetilde S_N=\sum_{j=1}^N \Delta_j$. By the uniform third moment bound, for each $j$ we have, \begin{align*}
    \mathbb{E}\left[|\Delta_j|\right]&\leq 2\,\mathbb E\left[|g_j(X_j)|\,\mathbbm 1_{\{|g_j(X_j)|>\sqrt{N}\}}\right]\\
    &\leq 2\,\mathbb E\left[|g_j(X_j)|^3\right] \, N^{-1}
\;\leq\; \rho N^{-1}.
\end{align*} Summing over all $j\in\{1,\cdots, N\}$, we see that the shift error is negligible.

%\textbf{Event Difference} Let $d_K$ denote the Kolmogorov distance. By Markov's inequality, $d_K(S, S_N^{(B)}) = \sup_{x\in \mathbb{R}}|P(S_N\leq x) - P(S_N^{(B)}\leq x)| \leq P(S \neq S_N^{(B)}) \leq P(\{\exists j: |g(X_j) > B|\}) \leq \sum_{j=1}^N P(|g(X_j)|>B)\leq \sum_{j=1}^N \frac{\mathbb{E}[|g(X_j)|^3]}{B^3}$. Note that this is $O(N^{-1/2})$ for $B = \sqrt{N}$. 

\textbf{Variance Change}: Truncation may also change the variance, but we similarly show that this change is negligible. We may write
\begin{align*}
\text{Var}(S_N)
&=\text{Var}\!\left(S_N^{(\sqrt N)}\right)
+ \text{Var}\left(\sum_{j=1}^N\Delta_j\right)
+ 2\,\text{Cov}\!\left(S_N^{(\sqrt N)},\Delta_N\right),
\end{align*} so $$\mathlarger{|}\text{Var}(S_N)-\text{Var}(S_N^{(\sqrt N)})\mathlarger{|}
\;\le\;
\text{Var}\left(\sum_{j=1}^N\Delta_j\right)
+2\,\mathlarger{|}\text{Cov}\mathlarger{(}S_N^{(\sqrt N)},\sum_{j=1}^N\Delta_j\mathlarger{)}\mathlarger{|}.$$

Since $|\Delta_j|
\le
2\,|g_j(X_j)|\mathbbm 1_{\{|g_j(X_j)|>\sqrt N\}}$ for each $j$, we have 
$$\mathbb E\left[\Delta_j^2\right]
\;\le\;
4\,\mathbb E\,\mathlarger{[}|g_j(X_j)|^2\,\mathbbm 1_{\{|g_j(X_j)|>\sqrt N\}}\mathlarger{]}
\;\le\;
4\,\frac{\mathbb E\left[|g_j(X_j)|^3\right]}{\sqrt N}
\;\le\;
\frac{4\rho}{\sqrt N}.$$ Summing over $j$ and applying the triangle inequality completes the bound for $\text{Var}(\sum_{j=1}^N\Delta_j)$ (we must control the off-diagonal covariance terms via mixing, valid for $p$ large enough). The H\"older inequality completes the bound for $|\text{Cov}(S_N^{(\sqrt N)},\sum_{j=1}^N\Delta_j)|$.

This establishes that the variance change arising from truncation is also negligible.

Furthermore, it is also straightforward to check that the various statistical properties, such as the variance, $\mathbb{E}[R]$, $\gamma$, $R_{j,k}$ are only changed a negligible amount from truncation.

\newpage
\printbibliography

\end{document}